\chardef\forshowkeys=0
\chardef\showllabel=0
\chardef\refcheck=0
\chardef\sketches=0
\chardef\showfont=1         
\author[M.S.~Ayd{\i}n]{Mustafa Sencer Ayd{\i}n}
\address{Department of Mathematics, University of Southern California, Los Angeles, CA 90089}
\email{maydin@usc.edu}
\title[Inviscid limit on $L^p$-based Sobolev conormal spaces]{Inviscid limit on $L^p$-based Sobolev conormal spaces for the 3D Navier-Stokes equations with the Navier boundary conditions}
\begin{document}
 
\def\inprogress{{\colg IN PROGRESS} }
\def\bnew{\colr {}}
\def\enew{\colb {}}
\def\bold{\colu {}}
\def\eold{\colb{}}
\def\YY{X}
\def\OO{\mathcal O}
\def\SS{\mathbb S}
\def\CC{\mathbb C}
\def\RR{\mathbb R}
\def\TT{\mathbb T}
\def\ZZ{\mathbb Z}
\def\HH{\mathbb H}
\def\RSZ{\mathcal R}
\def\LL{\mathcal L}
\def\SL{\LL^1}
\def\ZL{\LL^\infty}
\def\GG{\mathcal G}
\def\tt{\langle t\rangle}
\def\erf{\mathrm{Erf}}
\def\mgt#1{\textcolor{magenta}{#1}}
\def\ff{\rho}
\def\gg{G}
\def\sqrtnu{\sqrt{\nu}}
\def\ww{w}
\def\ft#1{#1_\xi}
\def\ges{\gtrsim}
\renewcommand*{\Re}{\ensuremath{\mathrm{{\mathbb R}e\,}}}
\renewcommand*{\Im}{\ensuremath{\mathrm{{\mathbb I}m\,}}}

\ifnum\showllabel=1
\def\llabel#1{\marginnote{\color{gray}\rm(#1)}[-0.0cm]\notag}
\else
\def\llabel#1{\notag}
\fi

\newcommand{\norm}[1]{\left\|#1\right\|}
\newcommand{\nnorm}[1]{\lVert #1\rVert}
\newcommand{\abs}[1]{\left|#1\right|}
\newcommand{\NORM}[1]{|\!|\!| #1|\!|\!|}
\theoremstyle{plain}
\newtheorem{theorem}{Theorem}[section]
\newtheorem{Theorem}{Theorem}[section]
\newtheorem{corollary}[theorem]{Corollary}
\newtheorem{Corollary}[theorem]{Corollary}
\newtheorem{proposition}[theorem]{Proposition}
\newtheorem{Proposition}[theorem]{Proposition}
\newtheorem{Lemma}[theorem]{Lemma}
\newtheorem{lemma}[theorem]{Lemma}
\theoremstyle{definition}
\newtheorem{definition}{Definition}[section]
\newtheorem{remark}[Theorem]{Remark}
\def\theequation{\thesection.\arabic{equation}}
\numberwithin{equation}{section}
\definecolor{mygray}{rgb}{.6,.6,.6}
\definecolor{myblue}{rgb}{9, 0, 1}
\definecolor{colorforkeys}{rgb}{1.0,0.0,0.0}
\newlength\mytemplen
\newsavebox\mytempbox
\makeatletter
\newcommand\mybluebox{%
  \@ifnextchar[
  {\@mybluebox}%
  {\@mybluebox[0pt]}}
\def\@mybluebox[#1]{%
  \@ifnextchar[
  {\@@mybluebox[#1]}%
  {\@@mybluebox[#1][0pt]}}
\def\@@mybluebox[#1][#2]#3{
  \sbox\mytempbox{#3}%
  \mytemplen\ht\mytempbox
  \advance\mytemplen #1\relax
  \ht\mytempbox\mytemplen
  \mytemplen\dp\mytempbox
  \advance\mytemplen #2\relax
  \dp\mytempbox\mytemplen
  \colorbox{myblue}{\hspace{1em}\usebox{\mytempbox}\hspace{1em}}}
 \makeatother
\def\XX{{\mathcal X}}
\def\XXT{{\mathcal X}_T}
\def\XXTzero{{\mathcal X}_{T_0}}
\def\XXi{{\mathcal X}_\infty}
\def\YY{{\mathcal Y}}
\def\YYT{{\mathcal Y}_T}
\def\YYTzero{{\mathcal Y}_{T_0}}
\def\YYi{{\mathcal Y}_\infty}
\def\cc{\text{c}}
\def\rr{r}
\def\weaks{\text{\,\,\,\,\,\,weakly-* in }}
\def\inn{\text{\,\,\,\,\,\,in }}
\def\cof{\mathop{\rm cof\,}\nolimits}
\def\Dn{\frac{\partial}{\partial N}}
\def\Dnn#1{\frac{\partial #1}{\partial N}}
\def\tdb{\tilde{b}}
\def\tda{b}
\def\qqq{u}
\def\lat{\Delta_2}
\def\biglinem{\vskip0.5truecm\par==========================\par\vskip0.5truecm}
\def\inon#1{\hbox{\ \ \ \ \ \ \ }\hbox{#1}}                
\def\onon#1{\inon{on~$#1$}}
\def\inin#1{\inon{in~$#1$}}
\def\FF{F}
\def\andand{\text{\indeq and\indeq}}
\def\ww{w(y)}
\def\ll{{\color{red}\ell}}
\def\ee{\epsilon_0}
\def\startnewsection#1#2{ \section{#1}\label{#2}\setcounter{equation}{0}}   
\def\loc{\text{loc}}
\def\nnewpage{ }
\def\sgn{\mathop{\rm sgn\,}\nolimits}    
\def\Tr{\mathop{\rm Tr}\nolimits}    
\def\div{\mathop{\rm div}\nolimits}
\def\curl{\mathop{\rm curl}\nolimits}
\def\dist{\mathop{\rm dist}\nolimits}  
\def\supp{\mathop{\rm supp}\nolimits}
\def\indeq{\quad{}}           
\def\period{.}                       
\def\semicolon{\,;}                  
\def\colr{\color{red}}
\def\colrr{\color{black}}
\def\colb{\color{black}}
\def\coly{\color{lightgray}}
\definecolor{colorgggg}{rgb}{0.1,0.5,0.3}
\definecolor{colorllll}{rgb}{0.0,0.7,0.0}
\definecolor{colorhhhh}{rgb}{0.3,0.75,0.4}
\definecolor{colorpppp}{rgb}{0.7,0.0,0.2}
\definecolor{coloroooo}{rgb}{0.45,0.0,0.0}
\definecolor{colorqqqq}{rgb}{0.1,0.7,0}
\def\colg{\color{colorgggg}}
\def\collg{\color{colorllll}}
\def\cole{\color{colorpppp}}
\def\cole{\color{coloroooo}}
\def\coleo{\color{colorpppp}}
\def\cole{\color{black}}
\def\colu{\color{blue}}
\def\colc{\color{colorhhhh}}
\def\colW{\colb}   
\definecolor{coloraaaa}{rgb}{0.6,0.6,0.6}
\def\colw{\color{coloraaaa}}
\def\comma{ {\rm ,\qquad{}} }            
\def\commaone{ {\rm ,\quad{}} }          
\def\lec{\lesssim}
\def\nts#1{{\color{red}\hbox{\bf ~#1~}}} 
\def\ntsf#1{\footnote{\color{colorgggg}\hbox{#1}}}
\def\ntsik#1{{\color{purple}\hbox{\bf ~#1~}}} 
\def\blackdot{{\color{red}{\hskip-.0truecm\rule[-1mm]{4mm}{4mm}\hskip.2truecm}}\hskip-.3truecm}
\def\bluedot{{\color{blue}{\hskip-.0truecm\rule[-1mm]{4mm}{4mm}\hskip.2truecm}}\hskip-.3truecm}
\def\purpledot{{\color{colorpppp}{\hskip-.0truecm\rule[-1mm]{4mm}{4mm}\hskip.2truecm}}\hskip-.3truecm}
\def\greendot{{\color{colorgggg}{\hskip-.0truecm\rule[-1mm]{4mm}{4mm}\hskip.2truecm}}\hskip-.3truecm}
\def\cyandot{{\color{cyan}{\hskip-.0truecm\rule[-1mm]{4mm}{4mm}\hskip.2truecm}}\hskip-.3truecm}
\def\reddot{{\color{red}{\hskip-.0truecm\rule[-1mm]{4mm}{4mm}\hskip.2truecm}}\hskip-.3truecm}
\def\gdot{\greendot}
\def\tdot{\gdot}
\def\bdot{\bluedot}
\def\ydot{\cyandot}
\def\rdot{\reddot}
\def\fractext#1#2{{#1}/{#2}}
\def\ii{\hat\imath}
\def\fei#1{\textcolor{blue}{#1}}
\def\vlad#1{\textcolor{cyan}{#1}}
\def\igor#1{\text{{\textcolor{colorqqqq}{#1}}}}
\def\igorf#1{\footnote{\text{{\textcolor{colorqqqq}{#1}}}}}
\newcommand{\p}{\partial}
\newcommand{\UE}{U^{\rm E}}
\newcommand{\PE}{P^{\rm E}}
\newcommand{\KP}{K_{\rm P}}
\newcommand{\uNS}{u^{\rm NS}}
\newcommand{\vNS}{v^{\rm NS}}
\newcommand{\pNS}{p^{\rm NS}}
\newcommand{\omegaNS}{\omega^{\rm NS}}
\newcommand{\uE}{u^{\rm E}}
\newcommand{\vE}{v^{\rm E}}
\newcommand{\pE}{p^{\rm E}}
\newcommand{\omegaE}{\omega^{\rm E}}
\newcommand{\ua}{u_{\rm   a}}
\newcommand{\va}{v_{\rm   a}}
\newcommand{\omegaa}{\omega_{\rm   a}}
\newcommand{\ue}{u_{\rm   e}}
\newcommand{\ve}{v_{\rm   e}}
\newcommand{\omegae}{\omega_{\rm e}}
\newcommand{\omegaeic}{\omega_{{\rm e}0}}
\newcommand{\ueic}{u_{{\rm   e}0}}
\newcommand{\veic}{v_{{\rm   e}0}}
\newcommand{\up}{u^{\rm P}}
\newcommand{\vp}{v^{\rm P}}
\newcommand{\tup}{{\tilde u}^{\rm P}}
\newcommand{\bvp}{{\bar v}^{\rm P}}
\newcommand{\omegap}{\omega^{\rm P}}
\newcommand{\tomegap}{\tilde \omega^{\rm P}}
\renewcommand{\up}{u^{\rm P}}
\renewcommand{\vp}{v^{\rm P}}
\renewcommand{\omegap}{\Omega^{\rm P}}
\renewcommand{\tomegap}{\omega^{\rm P}}
\def\hh{\text{h}}
\def\cco{\text{co}}

\begin{abstract}
We establish uniform bounds and the inviscid limit in $L^p$-based Sobolev conormal spaces for the solutions of the Navier-Stokes equations with the Navier boundary conditions in the half-space. We extend the vanishing viscosity results of~\cite{BdVC1} and~\cite{AK1} by weakening the normal and the conormal regularity assumptions, respectively.
We require the initial data to be Lipschitz with three integrable conormal derivatives. We also assume that the initial normal derivative has one or two integrable conormal derivative depending on the sign of the friction coefficient.
Finally, we establish the existence and uniqueness of the Euler equations with a bounded normal derivate, two bounded conormal derivatives, and three integrable conormal derivatives.
\end{abstract}

\maketitle

\startnewsection{Introduction}{sec.int}
 
 The inviscid limit is the 
 study of the limiting behavior of the solutions to the incompressible Navier-Stokes equations
 \begin{align}
  \partial_t u^\nu - \nu \Delta u^\nu + u^\nu \cdot \nabla u^\nu + \nabla p^\nu =0 
  \comma
  \nabla \cdot u^\nu = 0 \comma (x,t) \in \Omega \times (0,T)
  ,
  \label{NSE}
 \end{align}
 where $\Omega \subseteq \mathbb{R}^3$ as $\nu \to 0$.
 Assuming that $\{u^\nu\}_{\nu}$ is convergent, one may
 expect to recover a solution $u$ for the incompressible Euler equations
 \begin{align}
 	u_t + u\cdot \nabla u + \nabla p =0
 	\comma \nabla \cdot u = 0
 	\inin{\Omega}
 	.
 	\label{euler}
 \end{align}
 This problem has been open to various extents depending on
 the geometry and the dimension of the physical domain $\Omega$, the regularity imposed on the solutions,
 and the notion of convergence.
 When $\partial \Omega \neq 0$,
 the Euler equations \eqref{euler} are coupled with the slip boundary condition
 \begin{align}
 	u \cdot n = 0 
 	\onon{\partial\Omega}
 	\label{eulerb}
 \end{align}
 that models the tangential movement of the fluid particles along the boundary.
 For the Navier-Stokes equations, one possibility is to impose the no-slip boundary condition
 \begin{align}
 	u^\nu = 0, \onon{\partial \Omega},
 	\label{noslip}
 \end{align}
 which prevents the movement of the fluid particles at the boundary.
 Therefore, one may expect a formation of boundary layers or accumulation 
 of vorticity for regimes with low viscosity (or high Reynolds number).
 In fact, Kelliher proved in~\cite{Ke} that the inviscid limit (in the energy norm)
 is equivalent to the formation of a vortex sheet on the boundary. 
 Although being open to a large extent, the limiting 
 behavior of $u^\nu$ can be mathematically described under certain functional settings 
 when \eqref{noslip} is imposed; see, for example,~\cite{CLNV, DN, K3, KVW, M, SC1, SC2, TW}
 and the references in~\cite{MM}.
 
 One may also couple \eqref{NSE} with the Navier-boundary conditions
 \begin{align}
 	u^\nu\cdot n = 0 \comma 
 	\left(\frac{1}{2}(\nabla u^\nu + \nabla^T u^\nu)\cdot n\right)_\tau
 	=-\mu u^\nu_\tau
 	\onon{\partial\Omega}
 	,
 	\label{navierbdry}
 \end{align}
 where $\mu \in \mathbb{R}$ is a constant, $n$ is the outward unit normal vector, and 
 $v_\tau = v - (v \cdot n)n$
 is the tangential part of~$v$.
 Allowing tangential movement, the Navier-boundary conditions
 assert that the viscous stress is proportional to the tangential velocity by 
 a friction coefficient. Mathematically, this condition
 implies that the normal derivative can be written as a sum of tangential derivatives 
 and lower order terms on the boundary. Therefore, one may expect the formation
 of a so-called ``weak'' boundary layer. We refer the reader to~\cite{GK, IS}
 for the mathematical description of such boundary layers and the corresponding
 inviscid limit results. 
 
 The vanishing viscosity limit in the energy norm was first
 studied Iftimie and Planas in~\cite{IP}, where the authors establish
 the convergence of strong Navier-Stokes solutions
 in the energy norm to the strong Euler solution. 
 Later, this problem has been studied in~\cite{BS1, BS2, CQ, NP, WXZ, X, XX1, XX2}.
 We remark two common grounds of these results. First is that their functional
 setting is $L^2$-based, and second is that their assumptions on the initial data
 yield a strong solution for both \eqref{NSE} and \eqref{euler}. 
 
 Regarding the Lebesgue exponent, the authors in~\cite{BdVC1} 
 are the first to consider an $L^p$ with $p\neq 2$ functional setting
 in three dimensions with the Navier boundary condition.
 On the periodic channel, they assumed that the initial datum belongs
 to $W^{3,p}$, for $p>3/2$, and established that
 \begin{align}
 	u^\nu \to u \text{ in } C([0,T];W^{s,p}(\Omega))
 	\text{ and } u^\nu \rightharpoonup u \text{ in } L^\infty(0,T, W^{3,p}(\Omega)) \text{ weak* },
 	\llabel{bdvc} 
 \end{align}  
 for $s<3$ and $T>0$.
 Later, in~\cite{BdVC2}, the authors extended this result to integer derivatives $k\ge 3$
 upon considering $p \ge 2$. These results need not hold in curved domains; 
 see~\cite{BdVC3, BdVC4}
 for the negative results on the three-dimensional sphere. 
 
 Regarding the Euler solutions, Masmoudi and Rousset in~\cite{MR1}
 presented a functional framework allowing only a
 single normal derivative.  
 In particular, for initial data satisfying
 \begin{align}
 	(u,\nabla u)|_{t=0}  \in H^7_\cco(\Omega) \times (H^6_\cco(\Omega) \cap W^{1,\infty}_\cco(\Omega))
 	,\llabel{mrassumption}
 \end{align}
 (see the definitions Sobolev conormal spaces in~\eqref{con.def} and~\eqref{norm} below),
 they established the uniform convergence of $u^\nu$ to $u$,
 which also shows that the Euler equations are well-posed for
 this class of initial data.
 Later, in~\cite{AK1}, we weakened the assumptions 
 required on the initial data to establish uniform bounds on solutions and
 the uniform convergence in the vanishing viscosity limit. Namely,
 we assumed  
 \begin{align}
 	\begin{split}
	(u,\nabla u)|_{t=0}  &\in H^5_\cco(\Omega)  \times (H^2_\cco(\Omega) \cap L^\infty(\Omega))
	\comma \hspace{2.1cm} \mu \in \mathbb{R},
	\\
	(u,\nabla u)|_{t=0}  &\in (H^4_\cco(\Omega) \cap W^{2,\infty}_{\cco}(\Omega)) \times (H^1_\cco(\Omega) \cap L^\infty(\Omega))
	\comma \mu\ge 0,
	\llabel{0ourassumption}
	\end{split}
\end{align}
and established the inviscid limit in $L^\infty((0,T)\times \Omega)\cap L^\infty(0,T;L^2(\Omega))$.
We discuss in Section~\ref{sec.pre} the difference between the regularity required on the initial data
depending on the sign of $\mu$.

Our aim in the current work is to unify the $L^p$
and Sobolev conormal settings initiated by~\cite{BdVC1} and~\cite{MR1}, respectively,
for the study of the inviscid limit under the Navier-boudary conditions.
In particular, for $\delta>0$, we assume that
 \begin{align}
	\begin{split}
		(u,\nabla u)|_{t=0}  &\in 
		 (L^2(\Omega) \cap W_\cco^{3,3+\delta}(\Omega) \cap W_\cco^{3,6+2\delta}(\Omega))  \times (W^{2,3+\delta}_\cco(\Omega) \cap L^\infty(\Omega))
		\comma \mu \in \mathbb{R},
		\\
		(u,\nabla u)|_{t=0}  &\in 
		 (L^2(\Omega) \cap W_\cco^{6,3+\delta}(\Omega) \cap W^{2,\infty}_{\cco}(\Omega)) \times (W^{1,6}_\cco(\Omega) \cap L^\infty(\Omega))
		\comma \hspace{0.7cm} \mu\ge 0,
		\label{ourassumption}
	\end{split}
\end{align}
and establish the inviscid limit in $L^\infty(0,T;L^p(\mathbb{R}^3_+))$,
for $p\in [2,\infty]$; see Theorems~\ref{T01} and~\ref{T03}.
To the best of our knowledge, our work is the first to establish
the well-posedness of the 3D Navier-Stokes equations in $L^p$-based Sobolev conormal spaces. In addition, 
our vanishing viscosity result extends the one 
presented in~\cite{BdVC1} since we 
require only boundedness of one normal derivative. Next, we also improve upon the results in~\cite{AK1} and~\cite{MR1}
by reducing the conormal differentiability requirements on the initial data. 

A large part of this work is devoted to 
obtaining uniform-in-$\nu$ estimates when propogating \eqref{ourassumption}.
The uniform bounds established in~\cite{BdVC1} in the $L^p$-based Sobolev setting, 
rely on the symmetries
of the Navier-Stokes equation.
Such symmetries are unavailable to us  
since our functional setting is anisotropic 
and the commutator terms resulting from the conormal derivatives 
are not zero. 
In the $L^2$-based setting, 
we utilized energy methods to establish uniform bounds; see~\cite[Proposition~2.2]{AK1}.
The main challenge was the analysis of the commutator terms resulting
from the diffusive or the advective part of the equation. 
In the current work, we rely on the energy method
to estimate the conormal derivatives of $u$, $\nabla u$, and $\nabla p$.
However, unlike in the $L^2$ case, energy estimates on $Z^\alpha u$
do not yield a control over the term $\nu \Vert \nabla Z^\alpha u\Vert_{L^p}^p$,
where $Z^\alpha$ is a conormal derivative of order $|\alpha|$.
Indeed, in Proposition~\ref{P.Con}, we establish an estimate of the form
\begin{align}
	\begin{split}
		\Vert  Z^\alpha u(t)&\Vert_{L^p}^{p}
		+c_0\nu \sum_{0\le |\alpha| \le 3}\left(\int_0^t \int_\Omega \left(|\nabla Z^\alpha u|^2 |Z^\alpha u|^{p-2}
		+ |\nabla |Z^\alpha u|^\frac{p}{2}|^2\right) \,dxds\right)
		\\&\lec
		\Vert Z^\alpha u_0\Vert_{L^p}^p
		+\int_0^t I(s) \,ds
		+\nu^{\frac{p}{2}}\int_0^t \Vert \partial_z Z^{\beta} u(s)\Vert_{L^p}^p\,ds
		,
		\label{0EQ.Con}
	\end{split}
\end{align}
where $Z^\beta$ is a conormal derivative of order $|\alpha|-1$ and 
$I$ consists of the terms to be propogated by the Gronwall inequality.
Upon letting $p=2$, it is possible to control the normal derivative
term on the right-hand side by an induction argument; see~\cite[Proposition~3.1]{AK1}.
However, when $p>2$, this is not possible. 
Therefore, we cannot utilize the dissipative nature of the Laplacian term 
using energy methods. 

In \eqref{0EQ.Con}, the term involving $\partial_z Z^\beta u$
is multiplied by a power of $\nu$. Motivated by this, we rely on 
the maximal regularity properties of the heat equation to estimate this 
and many other higher order terms that result from energy estimates. 
However, the maximal regularity estimates introduce new challenges. 
Broadly speaking, one requires initial data in $W^{1-\frac{2}{p},p}$ 
to gain a derivative in $L^p(\Omega \times (0,T))$. Therefore, we need to quantify by $\nu$
the approximation of the initial datum; see Section~\ref{sec.main}.
The second challenge arises from the unfavorable sign of the friction coefficient $\mu$.
When $\mu$ is non-negative, \eqref{navierbdry} becomes a homogenous Robin boundary condition.
Hence, we do not run into boundary terms when we utilize maximum regularity estimates.
However, this is not the case when $\mu <0$ because estimates result in a boundary term involving
mixed fractional space-time derivates. We handle this term by establishing a trace-type inequality; see Lemma~\ref{Lt2}.
Lastly, since $\partial \Omega \neq 0$, the maximal regularity estimates require a compatible initial datum. 
In~2D, it is possible to approximate a non-compatible initial data with a sequence
of smooth compatible ones, see~\cite{CMR, LNP}, but it is unclear whether this is possible in 3D. 
Nevertheless, the inviscid limit problem with no-slip conditions and non-compatible initial data
has been considered in~\cite{ACS, GKLMN}. 

Theorems~\ref{T01} and~\ref{T03} have an implication in
addition to the inviscid limit. Namely, they also
establish the well-posedness of the Euler equations 
for the class of initial data given by \eqref{ourassumption}.
In Theorem~\ref{T04}, we further weaken the assumptions \eqref{ourassumption}
and prove that the Euler equations has a unique solution. 
Before discussing Theorem~\ref{T04}, we briefly summarize 
the previous works on the well-posedness of the
strong solutions for the three-dimensional
Euler equations. The literature dates back to
the work of Lichtenstein in~\cite{L}, where he considered $u_0$ in $C^{k,\alpha}$.
Regarding the Sobolev spaces, in~\cite{K1} and~\cite{K2}, Kato 
studied this problem for $u_0 \in H^m(\mathbb{R}^3)$, for $m\ge 3$, and $H^s(\mathbb{R}^3)$, for $s > \frac{5}{2}$, respectively.
Then, Kato and Ponce in~\cite{KP} extended this result to the $L^p$-based Sobolev spaces
$W^{s,p}(\mathbb{R}^d)$, for $s > \frac{d}{p}+1$. These results have counterparts in 
bounded domains such as~\cite{BB, KL, Te}. In addition,
well-posedness of the Euler equations has been studied in other functional settings
such as Besov or Triebel-Lizorkin spaces. We refer the reader to~\cite {C1,C2,C3,CW,GL,GLY,PP}
and the references therein. These results consider isotropic spaces
and require integrability of at least two derivatives in the normal direction,
whereas the inviscid limit type constructions in~\cite{AK1, MR1} do not.
Apart from the inviscid limit approach, the authors in~\cite{BILN} established the well-posedness of the Euler
equations in the conormal spaces. Namely,
they considered the class of initial data satisfying
\begin{align}
	(u,\nabla u,\curl u)|_{t=0}
	\in
	H^4_\cco(\Omega)
	\times
	H^3_\cco(\Omega)
	\times
	W^{1,\infty}_\cco(\Omega)
	,
	\llabel{EQ0006}
\end{align}
for general domains.
Later, in~\cite{AK2}, we extended this result 
by assuming
\begin{align}
	(u,\nabla u)|_{t=0}
	\in
	H^4_\cco(\Omega) \cap W^{2,\infty}_\cco(\Omega)
	\times
	L^\infty(\Omega),
	\llabel{EQ0007}
\end{align}
in the half-space.
The improvement in~\cite{AK2} is that
we did not require any bounded or integrable conormal derivatives on $\nabla u$. 
In our current work, we further decrease the conormal
differentiability requirement on $u_0$ and prove the existence and
uniqueness for the Euler equations in the half-space with initial data in
\begin{align}
	(u,\nabla u)|_{t=0}
	\in
	(W^{3,3+\delta}_\cco(\Omega) \cap W^{2,\infty}_\cco(\Omega)
	)\times
	L^\infty(\Omega),
	\llabel{EQ0008}
\end{align}
for $\delta>0$; see Theorem~\ref{T04}.
The construction here is follows by the a~priori
estimates and approximation by smooth solutions.
 
 \startnewsection{Preliminaries and the Main Results}{sec.pre}
 
Let $\Omega =\mathbb{R}^3_+$, and 
 denote  
 $x = (x_\hh,z) = (x_1,x_2,z) \in \Omega= \mathbb{R}^2 \times \mathbb{R}_+$.
Rewriting \eqref{NSE} and \eqref{navierbdry} for the half-space, we have
 \begin{align}
  \partial_t u^\nu - \nu \Delta u^\nu + u^\nu \cdot \nabla u^\nu + \nabla p^\nu =0 
  \comma
  \nabla \cdot u^\nu = 0  
  ,
  \label{NSE0}
 \end{align}
 for $(x,t) \in \Omega \times (0,T)$, and
 \begin{align}
  u^\nu_3 = 0 \comma \partial_z u^\nu_\hh = 2\mu u^\nu_\hh, 
  \label{hnavierbdry}
 \end{align}
 for $(x,t)\in \{z=0\}\times (0,T)$
where $u_\hh = (u_1,u_2)$.
Now, we denote $\varphi(z) = z/(1+z)$ and
 write
 $Z_1=\partial_1$, $Z_2=\partial_2$, and $Z_3=\varphi \partial_z$.
 Next, we define 
 the Sobolev conormal spaces 
 \begin{align}
 	W^{m,p}_\cco=
   W^{m,p}_\cco(\Omega)
   =
   \{f \in L^p(\Omega) : Z^\alpha f \in L^p(\Omega), \alpha \in \mathbb{N}_0^3, 0\le |\alpha|\le m  \}
   ,
   \label{con.def}
   \end{align} 
 for $p \in [1,\infty]$. We note that these are Banach spaces 
 when equipped with the norms defined by
 \begin{align}
  \begin{split}
   \Vert f\Vert_{W^{m,p}_\cco(\Omega)}^2
   =&
   \Vert f\Vert_{m,p}^p
   =
   \sum_{|\alpha|\le m} \Vert Z^\alpha f\Vert_{L^p(\Omega)}^p
   \comma 1\le p<\infty\\
   \Vert f\Vert_{W^{m,\infty}_\cco(\Omega)}
   =&
   \Vert f\Vert_{m,\infty}
   =
   \sum_{|\alpha|\le m} \Vert Z^\alpha f\Vert_{L^\infty(\Omega)}
   .
   \label{norm}
  \end{split}
 \end{align}
 We denote by $\Vert f\Vert_{L^2}$ the $L^2$ norm and by $\Vert f\Vert_{L^\infty}$
 the $L^\infty$ norm of $f$. Next, we fix a sufficienty small constant 
 $\bar{\nu}>0$ and an arbitrary $\delta>0$. Moreover, for the rest of this work,
 we assume that $T\le 1$. Now, we state our first main result. 
 
 \cole
 \begin{Theorem}[existence, uniqueness, and inviscid limit]
 	\label{T01}
 	Assume that $\nu \in (0,\bar{\nu}]$ and $\mu \in \mathbb{R}$.
 	Let $u_0 \in L^2(\Omega)\cap W^{3,3+\delta}_\cco(\Omega) \cap W^{3,6+2\delta}_\cco(\Omega)$,
 	with $\nabla u_0 \in W^{2,3+\delta}_\cco(\Omega) \times L^\infty(\Omega)$ be
 	such that $\div u_0 = 0$ on $\Omega$ and $u_0 \cdot n= 0$ and
 	$\partial_z (u_0)_\hh = 2\mu (u_0)_\hh$ on~$\partial \Omega$.
 	Then, there exists a sequence of smooth divergence-free initial data $u_0^\nu$
 	such that $u_0^\nu \to u_0$ in $L^{2}(\Omega)$
 	and the following holds.
 	\begin{itemize}
 		\item[i.] (Existence and Uniqueness)
 		There exists $T>0$ independent of $\nu$ and a unique solution
 		$u^\nu \in L^\infty(0,T; L^2(\Omega) \cap W^{3,3+\delta}_\cco(\Omega) \cap W^{3,6+2\delta}_\cco(\Omega)$
 		of \eqref{NSE0}--\eqref{hnavierbdry} on~$[0,T]$ with the initial data $u_0^\nu$.
 		Moreover, there is $M>0$ depending only on the size of $u_0$ such that
 		\begin{align}
 			\begin{split}
 				\sup_{[0,T]}&
 				(\Vert u^\nu(t)\Vert_{L^2}^2
 				+\Vert u^\nu(t)\Vert_{3,3+\delta}^{3+\delta}
 				+\Vert u^\nu(t)\Vert_{3,6+2\delta}^{6+2\delta}
 				+\Vert \nabla u^\nu(t)\Vert_{2,3+\delta}^{3+\delta}
 				+\Vert \nabla u^\nu(t)\Vert_{L^\infty})
 				\le M.
 				\label{EQ.main2}
 			\end{split}
 		\end{align}
 		\item[ii.] (Inviscid Limit)
 		There exists  a unique solution 
 		$u \in L^\infty(0,T; L^{2}(\Omega) \cap W^{3,3+\delta}_\cco(\Omega)\cap W^{3,6+2\delta}_\cco(\Omega)$
 		with 
 		$\nabla u \in L^\infty(0,T;W^{2,3+\delta}_\cco(\Omega)\cap L^\infty(\Omega))$
 		to the Euler equations
 		\eqref{euler}
 		such that 
 		\begin{align}
 			\sup_{[0,T]}
 			\Vert u^\nu -u\Vert_{L^p}
 			\le \bar{M}\nu^\frac{3+p}{5p}
 			\comma p \in [2,\infty],
 			\label{invlimit1}
 		\end{align}
 		where $\bar{M}>0$ is independent of~$\nu$.
 	\end{itemize}
 \end{Theorem}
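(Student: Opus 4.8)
The plan is to prove Theorem~\ref{T01} in three stages: first derive uniform-in-$\nu$ a~priori estimates for smooth solutions, then construct the approximating sequence $u_0^\nu$ with a quantitative rate in $L^2$, and finally pass to the limit. For the a~priori estimates I would start from a smooth solution and run the conormal energy estimate of Proposition~\ref{P.Con}, which yields \eqref{0EQ.Con} for $p = 3+\delta$ and $p = 6+2\delta$. The term $\nu^{p/2}\int_0^t \Vert \partial_z Z^\beta u\Vert_{L^p}^p\,ds$ on the right cannot be absorbed by the (weak) dissipation on the left when $p>2$, so the key is to bound $\Vert \partial_z Z^\beta u\Vert_{L^p(\Omega\times(0,t))}$ using the maximal regularity of the heat semigroup with the boundary condition \eqref{hnavierbdry}. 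Writing $Z^\beta u$ (with $|\beta|\le 2$) as the solution of a heat equation with forcing given by the pressure gradient, the transport terms, and the conormal commutators — all of which are controlled by the quantities already appearing in the Gronwall functional $I$ — one gains the needed normal derivative in $L^p(\Omega\times(0,T))$ at the cost of requiring the initial datum in $W^{1-2/p,p}$ and, when $\mu<0$, a boundary term in mixed fractional derivatives that is handled by the trace-type inequality of Lemma~\ref{Lt2}. Since $u_0^\nu$ will only be close to $u_0$ in $L^2$, the maximal-regularity initial-data norm must be allowed to blow up at a controlled rate in $\nu$, and the factor $\nu^{p/2}$ in front of the offending term is what makes this affordable; this interplay is where the exponent $\tfrac{3+p}{5p}$ in \eqref{invlimit1} ultimately comes from. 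Closing the Gronwall argument over a time $T>0$ independent of $\nu$ then gives \eqref{EQ.main2}, together with the uniform bounds on $\Vert\nabla u^\nu\Vert_{L^\infty}$ via Sobolev-type anisotropic embeddings applied to the conormal bounds on $u^\nu$ and $\nabla u^\nu$.

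For part~(i) I would construct $u^\nu$ by a standard fixed-point/Galerkin scheme for \eqref{NSE0}--\eqref{hnavierbdry} with the smooth data $u_0^\nu$, obtaining a local smooth solution, and then use the uniform bounds just established to continue it up to the common time $T$; uniqueness in the class $L^\infty(0,T; L^2\cap W^{3,3+\delta}_\cco\cap W^{3,6+2\delta}_\cco)$ follows from an $L^2$ energy estimate on the difference of two solutions, the transport term being controlled since $\nabla u^\nu\in L^\infty$. Constructing the sequence $u_0^\nu$ is a separate technical point: one mollifies $u_0$ at scale depending on $\nu$ while (a) preserving $\div u_0^\nu=0$, (b) restoring the compatibility conditions $u_0^\nu\cdot n=0$ and $\partial_z(u_0^\nu)_\hh = 2\mu(u_0^\nu)_\hh$ on $\partial\Omega$ — for which one adds a small boundary corrector built from the trace data — and (c) tracking that $\Vert u_0^\nu - u_0\Vert_{L^2}\to 0$ with a rate and that the higher conormal norms of $u_0^\nu$ in \eqref{ourassumption} stay bounded (or grow at a rate that the $\nu^{p/2}$ weight kills). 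This is the step I expect to be the main obstacle, since in 3D smooth compatible approximations are not known to exist in general (as noted in the introduction), so the argument must exploit the specific half-space geometry and the linear structure of \eqref{hnavierbdry} to build the corrector explicitly.

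For part~(ii), the uniform bounds \eqref{EQ.main2} give weak-$*$ compactness of $\{u^\nu\}$ in $L^\infty(0,T; W^{3,3+\delta}_\cco\cap W^{3,6+2\delta}_\cco)$ with $\{\nabla u^\nu\}$ bounded in $L^\infty$, and the equation \eqref{NSE0} gives equicontinuity in time; an Aubin–Lions argument then extracts a limit $u$ solving the Euler equations \eqref{euler}, which inherits the stated regularity by lower semicontinuity. The convergence rate \eqref{invlimit1} is obtained by a direct energy estimate on $w^\nu := u^\nu - u$: the difference solves a Navier–Stokes-type system with forcing $\nu\Delta u$ and a transported term, and testing against $w^\nu$ gives, after using $\nabla u^\nu, \nabla u \in L^\infty$ and the boundary terms from \eqref{hnavierbdry}, a Gronwall inequality of the form $\tfrac{d}{dt}\Vert w^\nu\Vert_{L^2}^2 \lesssim \Vert w^\nu\Vert_{L^2}^2 + \nu\Vert\nabla u\Vert_{L^2}\Vert w^\nu\Vert_{L^2} + (\text{data error})$, yielding an $L^2$ rate; interpolating this with the uniform $L^\infty$ bound on $w^\nu$ coming from \eqref{EQ.main2} and the anisotropic embedding produces the $L^p$ rate $\nu^{(3+p)/(5p)}$ for $p\in[2,\infty]$. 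Uniqueness of the Euler solution in this class is again an $L^2$ difference estimate. The one genuinely delicate point throughout is keeping every application of maximal regularity consistent with the $\nu$-dependent data error, so that the final rate in \eqref{invlimit1} is honest; everything else is a careful but routine bookkeeping of the conormal commutators already handled in Proposition~\ref{P.Con}.
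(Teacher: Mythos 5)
Your overall architecture matches the paper's: conormal/normal/pressure estimates plus maximal parabolic regularity and the trace lemma to close the a~priori bound \eqref{ap1}, then approximation of the data, Gronwall on a $\nu$-independent interval for \eqref{EQ.main2}, and a quantitative $L^2$ compactness argument (the paper's Proposition~\ref{P01}) followed by interpolation for \eqref{invlimit1}. The genuine gap is the step you yourself flag as the main obstacle: the construction of $u_0^\nu$. Your mollification-plus-boundary-corrector sketch is not carried out, and it misreads the difficulty: the theorem already assumes $u_0\cdot n=0$ and $\partial_z(u_0)_\hh=2\mu(u_0)_\hh$ on $\partial\Omega$, so compatibility does not need to be \emph{restored}; it needs to be \emph{preserved} under smoothing, while the maximal-regularity data norms $\mathcal{M}_{0,2,6+2\delta}$ and $\mathcal{N}_{0,1,3+\delta}$ (which involve more derivatives, and derivatives of $\eta_0$, than $u_0$ possesses) must blow up no faster than the $\nu$-prefactors can absorb. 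The paper's device is to run the heat flow with the same Navier boundary conditions for time $\nu^{3}$ and set $u_0^\nu=v(\nu^{3},\cdot)$: this keeps the compatibility conditions \eqref{inidata} automatically, gives the quantitative bounds \eqref{inibds} (the regularized data norms are $\lec\nu$, not merely ``controlled blow-up''), and yields $\Vert u_0^\nu-u_0\Vert_{L^2}\lec\nu^{2}$. Without some such quantitative construction your Gronwall closure and your rate bookkeeping cannot be completed, so this is a missing idea rather than a routine technicality.

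A second, smaller but real error concerns the rate \eqref{invlimit1}. It does not ``ultimately come from'' the interplay of maximal regularity with the data error, and it cannot be obtained by interpolating the $L^2$ rate with the uniform $L^\infty$ bound on $w^\nu=u^\nu-u$: plain $L^2$--$L^\infty$ interpolation gives only $\Vert w^\nu\Vert_{L^p}\lec\Vert w^\nu\Vert_{L^2}^{2/p}\Vert w^\nu\Vert_{L^\infty}^{1-2/p}\lec\nu^{1/p}$, which is strictly weaker than $\nu^{(p+3)/(5p)}$ for $p>2$. The paper instead combines the $L^2$ rate $\Vert w^\nu\Vert_{L^2}\lec\nu^{1/2}$ from the difference energy estimate (Proposition~\ref{P01}) with the uniform \emph{Lipschitz} bound from \eqref{EQ.main2}, via the Gagliardo--Nirenberg inequality
\begin{align}
\Vert w^\nu\Vert_{L^{p}}
\lec
\Vert w^\nu\Vert_{L^{2}}^{\frac{2p+6}{5p}}
\Vert \nabla w^\nu\Vert_{L^{\infty}}^{\frac{3p-6}{5p}}
+\Vert w^\nu\Vert_{L^{2}}
,
\notag
\end{align}
which is exactly what produces the exponent $\frac{p+3}{5p}$. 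Your first-stage description (absorbing $\nu^{p/2}\int_0^t\Vert\partial_z Z^\beta u\Vert_{L^p}^p$ through Lemma~\ref{Lmax1}/\ref{Lmax2}-type maximal regularity and Lemma~\ref{Lt2} when $\mu<0$) is consistent with the paper; with the data construction supplied and the interpolation corrected, the argument would close.
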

 \colb
 
 Assuming that $\mu\ge 0$, we establish
 the existence, uniqueness, and the inviscid limit with 
 one less conormal derivative on $\nabla u$.
 
 \cole
 \begin{Theorem}[A sharper result for non-negative friction]
  \label{T03}
  Assume that $\nu \in (0,\bar{\nu}]$ and $\mu \ge 0$.
  Let $u_0 \in L^2(\Omega) \cap W^{3,6}_\cco(\Omega) \cap W^{2,\infty}_\cco(\Omega)$
  be divergence-free with vanishing normal component on~$\partial \Omega$,
  and $\nabla u_0 \in W^{1,6}_\cco(\Omega) \times L^\infty(\Omega)$ satisfy
   $\partial_z (u_0)_\hh = 2\mu (u_0)_\hh$ on~$\partial \Omega$.
   Then, there exists a sequence of smooth divergence-free initial data $u_0^\nu$
   such that $u_0^\nu \to u_0$ in $L^2(\Omega)$
   and the following holds.
  \begin{itemize}
   \item[i.] (Existence and Uniqueness)
   There exists $T>0$ independent of $\nu$ and a unique solution
   $u^\nu \in C([0,T];L^2(\Omega) \cap W^{1,6}_\cco(\Omega)) \cap L^\infty(0,T;W^{3,6}_\cco(\Omega)\cap W^{2,\infty}_\cco(\Omega))$
   to \eqref{NSE0}--\eqref{hnavierbdry} on~$[0,T]$ with initial data $u_0^\nu$.
 Moreover, there exists $M>0$ depending only on the size of $u_0$ such that
   \begin{align}
    \begin{split}
    \sup_{[0,T]}&
    (\Vert u^\nu(t)\Vert_{L^2}^{2}
    +\Vert u^\nu(t)\Vert_{3,6}^{6}
    +\Vert u^\nu(t)\Vert_{2,\infty}^{2}
    +\Vert \nabla u^\nu(t)\Vert_{1,3+\delta}^{3+\delta}
    +\Vert \nabla u^\nu(t)\Vert_{L^\infty}^2)
    \le M.
    \llabel{EQ.main2}
    \end{split}
   \end{align} 
 \item[ii.] (Inviscid Limit)
   There exists a unique solution 
   $u \in L^\infty(0,T;L^2(\Omega)\cap W^{3,6}_\cco(\Omega)\cap W^{2,\infty}_\cco(\Omega))$
   with 
   $\nabla u \in L^\infty(0,T;W^{1,6}_\cco(\Omega)\cap L^\infty(\Omega))$
   to the Euler equations
   \eqref{euler}
   such that 
   \eqref{invlimit1} holds.
\end{itemize}
 \end{Theorem}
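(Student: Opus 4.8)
The plan is to run the same strategy as for Theorem~\ref{T01} but exploit the favorable sign of $\mu$ to trade one conormal derivative on $\nabla u$ for the homogeneous Robin structure of the boundary condition. First I would construct the smooth approximate data $u_0^\nu$: mollify $u_0$ while preserving $\div u_0^\nu=0$ and the boundary relations $u_0^\nu\cdot n=0$, $\partial_z(u_0^\nu)_\hh=2\mu(u_0^\nu)_\hh$ on $\partial\Omega$, and quantify $\Vert u_0^\nu-u_0\Vert_{L^2}\lec$ (a power of $\nu$) together with a compatible-data bound so that the maximal-regularity machinery applies; the essential point, as in Section~\ref{sec.main}, is that since $\mu\ge 0$ the condition \eqref{hnavierbdry} is a homogeneous Robin condition, so no boundary term with mixed fractional space–time derivatives appears and the trace lemma~\ref{Lt2} is not needed here. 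For the smooth solution $u^\nu$ one gets local existence from standard parabolic theory; the crux is the uniform-in-$\nu$ a~priori estimate.

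Next I would establish \eqref{EQ.main2} by a closed energy/maximal-regularity argument on a short time interval $[0,T]$ with $T$ independent of $\nu$. The ingredients: (1) the basic $L^2$ energy identity for $u^\nu$; (2) conormal energy estimates on $Z^\alpha u^\nu$, $|\alpha|\le 3$, in $L^6$, using Proposition~\ref{P.Con} in the form \eqref{0EQ.Con} with $p=6$, which leaves on the right-hand side a term $\nu^{p/2}\int_0^t\Vert\partial_z Z^\beta u^\nu\Vert_{L^p}^p$ of one lower conormal order; (3) control of that normal-derivative term — and of the other higher-order remainders $I(s)$ — via maximal $L^p$-regularity for the heat semigroup with the homogeneous Robin boundary condition, which costs a factor of $\nu$ and is therefore absorbed; (4) $L^\infty$ bounds on $\nabla u^\nu$ obtained from $\Vert u^\nu\Vert_{3,6}$ and $\Vert u^\nu\Vert_{2,\infty}$ through the anisotropic Sobolev/Gagliardo–Nirenberg embeddings in the half-space together with elliptic regularity for the pressure (Neumann problem $\Delta p^\nu=-\partial_i\partial_j(u^\nu_iu^\nu_j)$). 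Here is where $\mu\ge 0$ buys the sharper statement: the $W^{2,\infty}_\cco$ propagation of $u^\nu$ needs only $\nabla u_0\in W^{1,6}_\cco$ rather than $W^{2,3+\delta}_\cco$, because the Robin boundary condition lets us rewrite $\partial_z u^\nu_\hh$ in terms of tangential quantities and feed the conormal $W^{3,6}$ bound into a bootstrap for the top normal derivative without an extra conormal derivative on $\nabla u^\nu$. Collecting everything gives a Gronwall inequality of the form $y(t)\le y(0)+C\int_0^t P(y(s))\,ds + (\text{lower-order in }\nu)$, yielding a uniform $M$ on a $\nu$-independent $[0,T]$.

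Then I would pass to the limit. From \eqref{EQ.main2} the family $\{u^\nu\}$ is bounded in $L^\infty(0,T;L^2\cap W^{3,6}_\cco\cap W^{2,\infty}_\cco)$ with $\{\nabla u^\nu\}$ bounded in $L^\infty(0,T;W^{1,6}_\cco\cap L^\infty)$; using the equation to bound $\partial_t u^\nu$ in a negative-order space and the Aubin–Lions lemma, extract a subsequence converging to some $u$, which solves the Euler equations \eqref{euler} (the viscous term $\nu\Delta u^\nu\to 0$ in, say, $H^{-1}$ by the uniform conormal bounds, and the boundary condition \eqref{hnavierbdry} degenerates to $u\cdot n=0$), with the stated regularity by weak-* lower semicontinuity. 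For the quantitative rate \eqref{invlimit1}, I would test the difference equation for $w^\nu=u^\nu-u$ against $w^\nu$: the $L^2$ estimate gives $\sup_{[0,T]}\Vert w^\nu\Vert_{L^2}\lec\nu^{1/2}$ (the viscous term contributes $\nu\int_0^t\Vert\nabla u^\nu\Vert_{L^2}\Vert\nabla w^\nu\Vert_{L^2}$, and one uses $\Vert w^\nu\Vert_{L^2}$ at $t=0$ being a power of $\nu$ plus $\nu\Vert\nabla u^\nu\Vert_{L^2}^2\lec\nu$), then interpolate against the uniform $W^{1,\infty}$-type bound on $w^\nu$ from \eqref{EQ.main2} to get the $L^p$ rate $\nu^{(3+p)/(5p)}$; finally uniqueness of the Euler solution in this class follows from the same $L^2$ difference estimate with $\nu=0$. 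The main obstacle, exactly as flagged in the introduction, is step (3): making the maximal-regularity estimates close requires carefully tracking the $\nu$-powers and the fractional-order initial data in $W^{1-2/p,p}$, and verifying that the approximate data $u_0^\nu$ are compatible and converge at the right rate — once the homogeneous Robin structure removes the $\mu<0$ boundary term, this is the only delicate point, and everything else is a bootstrap on the conormal norms.
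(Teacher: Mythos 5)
Your overall route is the same as the paper's: the paper proves Theorem~\ref{T03} exactly as Theorem~\ref{T01}, i.e.\ compatible smooth data, the uniform a~priori bound \eqref{ap2} (conormal $L^6$ estimates from Proposition~\ref{P.Con} and~\ref{P.Nor}, pressure estimates, maximal regularity with the homogeneous Robin condition, Gronwall), then strong $L^2$ compactness and interpolation for the rate \eqref{invlimit1}; your observation that Lemma~\ref{Lt2} and the $\mathcal{M}_{\mu,j,p}$ boundary terms disappear for $\mu\ge 0$ is exactly the point. Two steps of your outline, however, are not carried by the mechanisms you name. First, the construction of $u_0^\nu$: plain mollification does not obviously preserve, simultaneously, the divergence-free constraint, $u_0^\nu\cdot n=0$, the Robin compatibility $\partial_z(u_0^\nu)_\hh=2\mu(u_0^\nu)_\hh$, the uniform conormal bounds, \emph{and} the quantified smallness $\mathcal{M}_{0,2,6}^6+\mathcal{N}_{0,0,6}^6\lec\nu$ and $\Vert u_0^\nu-u_0\Vert_{L^2}\lec\nu^2$ that the Gronwall argument and the rate require. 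The paper gets all of this at once by solving the heat equation with the same Navier boundary conditions and setting $u_0^\nu=v(\nu^3,\cdot)$; some such device (not generic mollification) is needed, and you only state the desiderata.

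Second, and more seriously, your step (4): you propose to obtain the $L^\infty$ bounds (in particular $\Vert u^\nu\Vert_{2,\infty}$, hence $\Vert\nabla u^\nu\Vert_{L^\infty}$ via $\eta$) from $\Vert u^\nu\Vert_{3,6}$ and anisotropic Sobolev/Gagliardo--Nirenberg embeddings plus a ``bootstrap for the top normal derivative.'' This cannot work at the regularity of Theorem~\ref{T03}: the embedding used in the paper is $\Vert f\Vert_{2,\infty}\lec\Vert\nabla f\Vert_{2,3+\delta}+\Vert f\Vert_{2,3+\delta}$, i.e.\ it needs \emph{two} conormal derivatives on $\nabla u$, which is precisely what the sharper theorem dispenses with (here only $\eta\in W^{1,6}_\cco$ is available). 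The paper instead propagates $\Vert u\Vert_{2,\infty}$ and $\Vert\eta\Vert_{L^\infty}$ directly in time via Proposition~\ref{P.Inf}(ii) (an $L^q$ estimate with $q\to\infty$, following \cite[Proposition~6.1]{AK1}), and this is exactly where the sign $\mu\ge 0$ enters: the $L^\infty$-type estimate closes in the presence of the Laplacian only because the Robin boundary term has a favorable sign, not because $\partial_z u_\hh$ can be rewritten tangentially (that rewriting holds for every $\mu$). Replacing your embedding/bootstrap step by this direct $W^{2,\infty}_\cco$ propagation, and supplying a concrete compatible approximation of the data, would make the argument match the paper; the remaining items (Robin maximal regularity, the Cauchy-in-$L^2$ difference estimate giving $\nu^{1/2}$, and the interpolation to $\nu^{(p+3)/(5p)}$) are as in the paper, with your Aubin--Lions detour being an inessential variant of the paper's direct Cauchy argument in Proposition~\ref{P01}.
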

 \colb
 
 Theorem~\ref{T03} establishes the inviscid limit under weaker assumptions on the initial data.
 Indeed, when $f,\nabla f \in W^{2,3+\delta}_\cco$,
 we have the inequality
 \begin{align}
 	\Vert f\Vert_{2,\infty} \lec \Vert \nabla f\Vert_{2,3+\delta}+\Vert f\Vert_{2,3+\delta}
 	;\llabel{demo}
 \end{align}
 see~\cite{MR2}.
 Therefore, the class of initial data considered in Theorem~\ref{T01} contains $W^{2,\infty}_\cco$.
 Theorem~\ref{T03} holds under weaker assumptions due to the favorable sign of
 the friction coefficient $\mu$. 
 When $\mu \neq 0$, the Navier boundary condition is a Robin-type boundary condition
 and when $\mu < 0$, it is more challenging to close the estimates.
 In fact, assuming that $\mu \ge 0$, \cite[Proposition~6.1]{AK1} propagates $\Vert u\Vert_{2,\infty}$
 uniformly in time and viscosity, even in the presence of the Laplacian. However, it is unclear whether 
 the same result holds when $\mu<0$. 
 
 In our inviscid limit results, we impose conormal differentiability on $\nabla u_0$ to control the
 boundary or commutator terms resulting from the Laplacian. 
 However, when we only consider the Euler equations, it is possible to improve 
 upon the assumptions on $u_0$ and construct unique solutions.
 
 \cole
 \begin{Theorem}[Well-posedness of the Euler equations in Sobolev conormal spaces]
  \label{T04}
  Let $u_0 \in L^2(\Omega) \cap W^{3,3+\delta}_\cco(\Omega) \cap W^{2,\infty}_\cco(\Omega)$,
  with $\nabla u_0 \in L^\infty(\Omega)$, be
  such that $\div u_0 = 0$ and $u_0 \cdot n= 0$ on~$\partial \Omega$.
For some $T>0$,
   there exists  a unique solution 
   $u \in L^\infty(0,T;L^2(\Omega)\cap W^{3,3+\delta}_\cco(\Omega)\cap W^{2,\infty}_\cco(\Omega))$
   with $\nabla u \in L^\infty(\Omega \times (0,T))$
   to the Euler equations
   \eqref{euler}
   \begin{align}
    \sup_{[0,T]}
    (\Vert u(t)\Vert_{L^2}^2
    +\Vert u(t)\Vert_{3,3+\delta}^{3+\delta}
    +\Vert u(t)\Vert_{2,\infty}
    +\Vert \nabla u(t)\Vert_{L^\infty})
    \le M,
    \llabel{EQ.main3}
   \end{align}
where $M>0$ depends on the norms of the initial data. 
 \end{Theorem}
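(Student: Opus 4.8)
The plan is to obtain the solution as an inviscid limit of a carefully chosen sequence of Navier--Stokes solutions, leveraging Theorems~\ref{T01} and~\ref{T03}, and then to prove uniqueness directly at the level of the Euler equations. The obstacle to a direct application of the earlier theorems is that the initial data here is assumed only in $L^2 \cap W^{3,3+\delta}_\cco \cap W^{2,\infty}_\cco$ with $\nabla u_0 \in L^\infty$; in particular we have \emph{no} integrable conormal derivative on $\nabla u_0$, so the maximal-regularity machinery used to handle the viscous boundary and commutator terms is not available. The remedy is to mollify: first I would construct, using a standard conormal mollification adapted to the half-space (convolution in $x_\hh$ together with a dilation/mollification in $z$ that respects the boundary, as in~\cite{AK2}), a sequence $u_0^{\eta}$ of divergence-free fields with $u_0^\eta \cdot n = 0$ on $\partial\Omega$ such that $u_0^\eta \to u_0$ in $L^2 \cap W^{3,3+\delta}_\cco$ and $\|u_0^\eta\|_{2,\infty} + \|\nabla u_0^\eta\|_{L^\infty}$ stays bounded by (a constant times) the corresponding norms of $u_0$, while additionally $\nabla u_0^\eta \in W^{2,3+\delta}_\cco$ (with norm blowing up in $\eta$, but finite). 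Each $u_0^\eta$ then satisfies the hypotheses of Theorem~\ref{T01} (with, say, $\mu = 0$, after imposing the compatible boundary condition $\partial_z (u_0^\eta)_\hh = 0$ on $\partial\Omega$ as part of the mollification), so it produces an Euler solution $u^\eta$ on a time interval $[0,T^\eta]$.

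The key point is then a \emph{uniform-in-$\eta$} a priori estimate for $u^\eta$ in the norm appearing in the statement, which does \emph{not} see the auxiliary $W^{2,3+\delta}_\cco$ regularity of $\nabla u_0^\eta$. This is where the bulk of the work lies and where I expect the main obstacle: one must run the conormal energy estimates on the Euler equations (the pressure being recovered from the elliptic problem $-\Delta p = \partial_i u_j \partial_j u_i$ with the Neumann-type condition $\partial_z p = 0$ on the flat boundary, which does not see viscosity) to control $\|u^\eta(t)\|_{3,3+\delta}$ together with $\|u^\eta(t)\|_{2,\infty}$ and $\|\nabla u^\eta(t)\|_{L^\infty}$, closing a Gronwall inequality purely in these quantities. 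The commutator terms from $[Z^\alpha, u\cdot\nabla]$ and from the pressure are handled exactly as in the a priori part underlying Theorem~\ref{T01}, but one must check that every term that previously required a viscous maximal-regularity bound is, in the inviscid setting, either absent (no Laplacian) or controllable by $\|\nabla u\|_{L^\infty}$ and the conormal norms alone; the $L^\infty$ bound on $\nabla u$ itself is propagated via the vorticity equation $\partial_t \omega + u\cdot\nabla\omega = \omega\cdot\nabla u$ together with the conormal control of $u$ (as in~\cite[Proposition~6.1]{AK1} and~\cite{AK2}). This yields a uniform time of existence $T>0$ and the stated bound $M$. Passing to a weak-$*$ limit (and strong limit in $L^2$, via an Aubin--Lions/compactness argument for the advection structure) produces a solution $u$ of~\eqref{euler} with the asserted regularity and the bound $\sup_{[0,T]}(\|u\|_{L^2}^2 + \|u\|_{3,3+\delta}^{3+\delta} + \|u\|_{2,\infty} + \|\nabla u\|_{L^\infty}) \le M$.

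For uniqueness, I would argue at the level of two solutions $u^{(1)}, u^{(2)}$ with the stated regularity: set $w = u^{(1)} - u^{(2)}$, $q = p^{(1)} - p^{(2)}$, which solves $\partial_t w + u^{(1)}\cdot\nabla w + w\cdot\nabla u^{(2)} + \nabla q = 0$, $\div w = 0$, $w\cdot n = 0$ on $\partial\Omega$, with $w|_{t=0} = 0$. Testing against $w$ in $L^2$ and using $\div u^{(1)} = 0$, the advection term $\int u^{(1)}\cdot\nabla w \cdot w = 0$ (the boundary term vanishes since $u^{(1)}\cdot n = 0$), while $|\int w\cdot\nabla u^{(2)}\cdot w| \le \|\nabla u^{(2)}\|_{L^\infty}\|w\|_{L^2}^2$ and the pressure term drops; Gronwall then gives $w \equiv 0$. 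Thus the $L^\infty$-in-space control of $\nabla u^{(2)}$ — which is exactly what the hypothesis guarantees — suffices for $L^2$-uniqueness, and hence for uniqueness in the solution class. The main obstacle, to reiterate, is the uniform-in-$\eta$ a priori estimate in the reduced norm: one must verify that dropping the $W^{2,3+\delta}_\cco$-regularity of $\nabla u_0$ is genuinely harmless once the viscosity is switched off, i.e.\ that no term in the Euler conormal energy identity secretly needs more than $\|\nabla u\|_{L^\infty}$ plus three integrable/two bounded conormal derivatives of $u$.
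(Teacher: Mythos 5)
Your overall architecture is in fact the paper's: approximate the initial datum, prove bounds for the approximations that are uniform in the approximation parameter via the $\nu=0$ a~priori estimates, pass to the limit in $L^\infty(0,T;L^2)$, and deduce uniqueness from the Lipschitz bound through the $L^2$ energy inequality for the difference of two solutions (your uniqueness paragraph is exactly the paper's argument). Moreover, the step you single out as the main obstacle --- closing a Gronwall inequality purely in $\Vert u\Vert_{L^2}$, $\Vert u\Vert_{3,3+\delta}$, $\Vert u\Vert_{2,\infty}$, $\Vert \nabla u\Vert_{L^\infty}$ without any viscous maximal-regularity input --- is precisely Proposition~\ref{P.Ap}(iii), i.e.\ \eqref{ap3}, which the paper obtains from the $\nu=0$ cases of \eqref{EQ.Con}, \eqref{EQ.Pre} and \eqref{EQ.Inf3}; so that part of your plan is sound and matches the paper.

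The genuine gap is in how you produce the approximate solutions and, in particular, in the uniform time of existence. You construct $u^\eta$ by applying Theorem~\ref{T01} (or~\ref{T03} with $\mu=0$) to mollified data, which forces you to build divergence-free approximations satisfying the Navier compatibility condition $\partial_z(u_0^\eta)_\hh=2\mu(u_0^\eta)_\hh$ on $\partial\Omega$ and carrying the extra conormal regularity ($u_0^\eta\in W^{3,6+2\delta}_\cco$, $\nabla u_0^\eta\in W^{2,3+\delta}_\cco$) --- norms that blow up as $\eta\to0$. The existence time $T^\eta$ furnished by Theorem~\ref{T01} depends on exactly these blowing-up quantities, so a~priori $T^\eta\to0$. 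A uniform bound in the weaker norm on $[0,\min(T,T^\eta)]$ does not by itself prolong $u^\eta$ beyond $T^\eta$: you would additionally need a continuation criterion for Euler solutions in the class of Theorem~\ref{T01} (continuation as long as, say, $\nabla u\in L^1_t L^\infty$), which is neither standard in that class nor supplied in your proposal. The paper sidesteps this entirely: the mollified data are smooth and lie in $H^5(\Omega)$, so one solves the Euler equations \eqref{euler}--\eqref{eulerb} directly by classical well-posedness in $C([0,T];H^5)$, where the Lipschitz continuation criterion is standard; the uniform-in-$r$ bounds then come from \eqref{ap3}, the approximations are shown to be Cauchy in $L^\infty(0,T;L^2)$ as in~\cite{AK2}, and the limit is the desired solution. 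If you replace ``apply Theorem~\ref{T01} to $u_0^\eta$'' by ``apply the classical smooth theory to the (automatically $H^5$) mollified data,'' the compatibility-condition complication and the continuation gap both disappear, and your argument becomes the paper's proof.
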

 \colb

We note that in Theorems~\ref{T01}--\ref{T04} we can replace 
$\nabla u_0$ by $\curl u_0$. In addition, 
when we restrict attention to bounded flat domains,
we may omit 
$L^{3+\delta}$-based conormal differentiability assumptions on $u_0$ in Theorem~\ref{T01}
as well as $L^2$ integrability assumptions in all of the results.
 
 When we consider conormal derivatives,
 we distinguish between $Z_\hh$ and $Z_3$
 upon letting
 \begin{align}
 	Z^\alpha = Z^{\tilde{\alpha}}_\hh Z^k_3
 	\comma \alpha = (\tilde{\alpha}, k) \in \mathbb{N}_0^2 \times \mathbb{N}_0
 	.
 	\llabel{ZhZ3}  
 \end{align} 
 In addition, we compute the commutator of $Z_3$ and $\partial_z$
 by using the following lemma.
 
 \cole
 \begin{lemma}
 	\label{L01}
 	Let $f$ be a smooth function. Then there exist
 	$\{c^k_{j,\varphi}\}_{j=0}^k$ and $\{\tilde{c}^k_{j,\varphi}\}_{j=0}^k$ 
 	smooth, bounded functions
 	 of $z$, for $k \in \mathbb{N}$, depending on $\varphi$ such that
 	\begin{align}
 		\begin{split}    
 			&(i)\, Z^k_3 \partial_z f 
 			=
 			\sum_{j=0}^{k}
 			c^k_{j,\varphi} \partial_z Z^j_3 f
 			=\partial_z Z_3^kf + \sum_{j=0}^{k-1}
 			c^k_{j,\varphi} \partial_z Z^j_3 f
 			,
 			\\
 			&(ii)\, \partial_z Z^k_3 f 
 			=
 			\sum_{j=0}^{k}
 			\tilde{c}^k_{j,\varphi} Z^j_3 \partial_z f
 			=Z_3^k \partial_z f +\sum_{j=0}^{k-1}
 			\tilde{c}^k_{j,\varphi} Z^j_3 \partial_z f
 			,
 			\\
 			&(iii)\, Z^k_3 \partial_{zz} f 
 			=
 			\sum_{j=0}^{k}
 			\sum_{l=0}^{j}
 			\left(
 			c^j_{l,\varphi} c^k_{j,\varphi} \partial_{zz} Z^l_3 f 
 			+
 			(c^j_{l,\varphi})' c^k_{j,\varphi} \partial_{z} Z^l_3 f 
 			\right)
 			,
 			\\
 			&(iv)\, \partial_{zz} Z^k_3 f 
 			=
 			\sum_{j=0}^{k}
 			\sum_{l=0}^{j}
 			\tilde{c}^j_{l,\varphi} \tilde{c}^k_{j,\varphi} Z^l_3 \partial_{zz} f 
 			+
 			\sum_{j=0}^{k}
 			(\tilde{c}^k_{j,\varphi})' Z^j_3 \partial_z f 
 			,
 			\label{EQL02}
 		\end{split}
 	\end{align}
 	where $\tilde{c}^k_{k,\varphi} = 1 = c^k_{k,\varphi}$, and
 	the prime indicates the derivative with respect to the variable~$z$.
 \end{lemma}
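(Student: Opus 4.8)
The plan is to deduce all four identities from the single first-order commutator relation
\begin{align*}
	Z_3\partial_z g = \varphi\,\partial_{zz}g = \partial_z(Z_3 g) - \varphi'\partial_z g
	\qquad\Longleftrightarrow\qquad
	\partial_z(Z_3 g) = Z_3\partial_z g + \varphi'\partial_z g,
\end{align*}
valid for every smooth $g$, which follows at once from $Z_3 = \varphi\partial_z$ and the product rule. The only structural fact about $\varphi$ used throughout is that $\varphi(z) = 1 - (1+z)^{-1}$, so that $\varphi$ and each derivative $\varphi^{(m)}$ are smooth and bounded on $[0,\infty)$.

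First I would prove $(i)$ by induction on $k$, the case $k=0$ being trivial with $c^0_{0,\varphi}=1$. Given $(i)$ for $k$, apply $Z_3$ to $Z_3^k\partial_z f = \sum_{j=0}^k c^k_{j,\varphi}\partial_z Z_3^j f$, use the Leibniz rule for the first-order operator $Z_3$ together with $Z_3 c^k_{j,\varphi} = \varphi(c^k_{j,\varphi})'$, and rewrite $Z_3\partial_z Z_3^j f = \partial_z Z_3^{j+1}f - \varphi'\partial_z Z_3^j f$ via the commutator relation applied to $g = Z_3^j f$; collecting the coefficient of $\partial_z Z_3^j f$ gives $(i)$ for $k+1$ with the recursion
\begin{align*}
	c^{k+1}_{j,\varphi} = \varphi(c^k_{j,\varphi})' - \varphi'c^k_{j,\varphi} + c^k_{j-1,\varphi},
	\qquad c^k_{-1,\varphi}:=0,\quad c^k_{j,\varphi}:=0\ \text{for}\ j>k,
\end{align*}
so in particular $c^{k+1}_{k+1,\varphi}=1$. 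The same induction, started from $\partial_z Z_3^{k+1} = (\partial_z Z_3)Z_3^k = (Z_3\partial_z + \varphi'\partial_z)Z_3^k$ and fed with $(ii)$ for $k$, yields $(ii)$ for $k+1$ with
\begin{align*}
	\tilde c^{k+1}_{j,\varphi} = \varphi(\tilde c^k_{j,\varphi})' + \varphi'\tilde c^k_{j,\varphi} + \tilde c^k_{j-1,\varphi},
	\qquad \tilde c^0_{0,\varphi}=1,
\end{align*}
and $\tilde c^k_{k,\varphi}=1$. An immediate induction on these two recursions shows that every $c^k_{j,\varphi}$ and $\tilde c^k_{j,\varphi}$ is a polynomial with constant coefficients in $\varphi,\varphi',\dots,\varphi^{(k)}$, hence a smooth bounded function of $z$.

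Finally, $(iii)$ and $(iv)$ follow from $(i)$ and $(ii)$ by writing $\partial_{zz}=\partial_z\circ\partial_z$ and applying the corresponding first-order identity twice. For $(iii)$: applying $(i)$ to the function $\partial_z f$ gives $Z_3^k\partial_{zz}f = \sum_{j=0}^k c^k_{j,\varphi}\,\partial_z(Z_3^j\partial_z f)$; then applying $(i)$ again to rewrite $Z_3^j\partial_z f = \sum_{l=0}^j c^j_{l,\varphi}\partial_z Z_3^l f$ and differentiating in $z$ by the product rule produces precisely the two families $c^k_{j,\varphi}c^j_{l,\varphi}\,\partial_{zz}Z_3^l f$ and $c^k_{j,\varphi}(c^j_{l,\varphi})'\,\partial_z Z_3^l f$. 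For $(iv)$: $(ii)$ gives $\partial_z Z_3^k f = \sum_{j=0}^k\tilde c^k_{j,\varphi}Z_3^j\partial_z f$; differentiating in $z$ and rewriting $\partial_z(Z_3^j\partial_z f) = \sum_{l=0}^j\tilde c^j_{l,\varphi}Z_3^l\partial_{zz}f$ by $(ii)$ applied to $\partial_z f$ yields the double sum $\tilde c^k_{j,\varphi}\tilde c^j_{l,\varphi}Z_3^l\partial_{zz}f$ together with the single sum $(\tilde c^k_{j,\varphi})'Z_3^j\partial_z f$. Since everything is elementary, the only place that requires attention — and the real content of the statement — is the bookkeeping of the index ranges in the nested sums of $(iii)$--$(iv)$ and the verification, built into the polynomial description above, that each differentiation in the recursions keeps the coefficients smooth and uniformly bounded on $[0,\infty)$; there is no analytic obstacle.
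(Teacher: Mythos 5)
Your proof is correct. The paper states Lemma~\ref{L01} without proof, treating it as elementary, and your argument is exactly the computation one would supply: induction on $k$ from the single commutator identity $\partial_z Z_3 g = Z_3\partial_z g + \varphi'\partial_z g$ gives (i)--(ii) with the recursions $c^{k+1}_{j,\varphi}=\varphi(c^k_{j,\varphi})'-\varphi'c^k_{j,\varphi}+c^k_{j-1,\varphi}$ and $\tilde c^{k+1}_{j,\varphi}=\varphi(\tilde c^k_{j,\varphi})'+\varphi'\tilde c^k_{j,\varphi}+\tilde c^k_{j-1,\varphi}$ (both preserving the normalization $c^k_{k,\varphi}=\tilde c^k_{k,\varphi}=1$), composing these with one extra $\partial_z$ yields (iii)--(iv) with precisely the stated double sums, and boundedness follows since each coefficient is a constant-coefficient polynomial in $\varphi,\varphi',\dots$, all bounded on $[0,\infty)$ because $\varphi(z)=z/(1+z)$.
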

 \colb
 
 We also utilize an interpolation type inequality for conormal derivatives.
 
 \begin{Lemma}[An interpolation inequality for Sobolev conormal spaces]
 	\label{L03}
 	Let $2 \le p < \infty$ and assume that $f,g\in L^\infty(\Omega) \cap W^{k,p}_\cco(\Omega)$, for $k \in \mathbb{N}$. Then
 	\begin{align}
 		\Vert Z^{\alpha}f 
 		Z^{\beta}g\Vert_{L^p}
 		\lec
 		\Vert f\Vert_{L^\infty}
 		\Vert g\Vert_{k,p}
 		+
 		\Vert f\Vert_{k,p}
 		\Vert g\Vert_{L^\infty}
 		,\label{EQ.int}
 	\end{align}
 	for any $\alpha$, $\beta \in \mathbb{N}^3_0$ with $|\alpha|+|\beta|=k$. 
 \end{Lemma}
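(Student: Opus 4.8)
The plan is to deduce \eqref{EQ.int} from a Gagliardo--Nirenberg type interpolation inequality for conormal derivatives, followed by Hölder's and Young's inequalities. The cases $|\alpha|=0$ and $|\beta|=0$ are immediate since $\Vert h_1\, Z^{k}h_2\Vert_{L^p}\le \Vert h_1\Vert_{L^\infty}\Vert h_2\Vert_{k,p}$, so assume $1\le |\alpha|,|\beta|\le k-1$. The key intermediate estimate is
\begin{equation*}
	\Vert Z^{\gamma}h\Vert_{L^{pk/|\gamma|}(\Omega)}
	\lec
	\Vert h\Vert_{L^\infty(\Omega)}^{\,1-|\gamma|/k}\,\Vert h\Vert_{k,p}^{\,|\gamma|/k}
	,\qquad 1\le |\gamma|\le k,
\end{equation*}
for $h\in L^\infty(\Omega)\cap W^{k,p}_\cco(\Omega)$ and $2\le p<\infty$. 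Granting this, observe that $1/(pk/|\alpha|)+1/(pk/|\beta|)=1/p$ because $|\alpha|+|\beta|=k$, so Hölder's inequality together with the above estimate (applied to $f$ with $\gamma=\alpha$ and to $g$ with $\gamma=\beta$, recalling $1-|\alpha|/k=|\beta|/k$) gives
\begin{equation*}
	\Vert Z^\alpha f\; Z^\beta g\Vert_{L^p}
	\le
	\Vert Z^\alpha f\Vert_{L^{pk/|\alpha|}}\,\Vert Z^\beta g\Vert_{L^{pk/|\beta|}}
	\lec
	\bigl(\Vert f\Vert_{L^\infty}\Vert g\Vert_{k,p}\bigr)^{|\beta|/k}\bigl(\Vert f\Vert_{k,p}\Vert g\Vert_{L^\infty}\bigr)^{|\alpha|/k}.
\end{equation*}
Since $|\beta|/k+|\alpha|/k=1$, Young's inequality $a^\theta b^{1-\theta}\le \theta a+(1-\theta)b$ with $\theta=|\beta|/k$ yields \eqref{EQ.int}.

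The conormal Gagliardo--Nirenberg inequality I would prove by the classical iteration, adapted to the half-space. Two features keep the half-space setting essentially as easy as $\mathbb{R}^3$: first, the fields $Z_1=\partial_1$, $Z_2=\partial_2$, $Z_3=\varphi\partial_z$ pairwise commute; second, they are all tangent to $\partial\Omega=\{z=0\}$ -- in particular $Z_3$ vanishes there because $\varphi(0)=0$ -- so integration by parts in any conormal direction produces no boundary term, the only novelty being that $\int_\Omega Z_3 u\, v=-\int_\Omega u\, Z_3 v-\int_\Omega \varphi' u v$ with $\varphi'$ bounded. The base case reads
\begin{equation*}
	\Vert Z_i h\Vert_{L^{2p}}^2
	\lec
	\Vert h\Vert_{L^\infty}\bigl(\Vert Z^2 h\Vert_{L^p}+\Vert Z h\Vert_{L^p}+\Vert h\Vert_{L^p}\bigr)
	\lec
	\Vert h\Vert_{L^\infty}\Vert h\Vert_{2,p}
	,\qquad i\in\{1,2,3\};
\end{equation*}
one writes $\int_\Omega|Z_i h|^{2p}=-\int_\Omega Z_i\bigl(|Z_i h|^{2p-2}Z_i h\bigr)h$ (plus a term $-\int_\Omega\varphi'|Z_i h|^{2p-2}(Z_i h)h$ when $i=3$), estimates the integrand by $\Vert h\Vert_{L^\infty}|Z_i h|^{2p-2}\bigl(|Z_i^2 h|+|Z_i h|\bigr)$, applies Hölder with exponents $\tfrac{2p}{2p-2}$ and $p$, and divides by the finite quantity $\Vert Z_i h\Vert_{L^{2p}}^{2p-2}$. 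Iterating along the scale $q_j=pk/j$ -- using the commutativity of the $Z_i$ so that every conormal derivative of order $j$ obeys the same second-order interpolation bound, Lemma~\ref{L01} to normalize the ordering of the $Z_3$'s, and the embedding $W^{k,p}_\cco(\Omega)\hookrightarrow W^{j,p}_\cco(\Omega)$ to absorb the bounded lower-order terms produced by $\varphi'$ -- the standard log-convexity argument yields the displayed inequality for all $1\le|\gamma|\le k$. Alternatively, one may invoke the conormal Gagliardo--Nirenberg inequality from~\cite{MR2}.

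The main obstacle, and essentially the only point requiring care, is the bookkeeping in this iteration: one must check that each integration by parts falls only on the tangential fields $Z_1,Z_2,Z_3$ so that boundary integrals genuinely vanish, and that the terms created by differentiating $\varphi$ carry bounded coefficients and strictly lower conormal order, hence are dominated by $\Vert h\Vert_{k,p}$ and do not disturb the homogeneity exploited in the Young-inequality step above. Everything else is routine Gagliardo--Nirenberg combinatorics.
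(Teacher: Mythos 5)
Your proposal is correct and follows essentially the same route as the paper: the heart of the argument is the conormal Gagliardo--Nirenberg estimate (the paper's \eqref{EQgag}, which is exactly your intermediate inequality since $1-|\alpha|/k=|\beta|/k$), combined with H\"older and Young, precisely as the paper intends. The only difference is that you sketch a self-contained integration-by-parts proof of \eqref{EQgag} on the half-space (using that the $Z_i$ commute, are tangent to $\partial\Omega$, and generate only bounded lower-order $\varphi'$ terms), whereas the paper obtains it by even extension in the vertical variable to $\mathbb{R}^3$ and a citation to \cite{Gu}; both are legitimate.
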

 \colb
 
 The proof of \eqref{EQ.int} follows from a Gagliardo-Nirenberg type
 inequality
 \begin{align}
 	\Vert f\Vert_{|\alpha|,\frac{kp}{|\alpha|}}
 	 \lec
 	  \Vert f\Vert_{L^{\infty}}^\frac{|\beta|}{k}\Vert f\Vert_{k,p}^\frac{|\alpha|}{k}
 	   ,\label{EQgag}
 \end{align}
 on $\mathbb{R}^3$ employed with an even extension along the vertical variable;
 see~\cite{Gu}.
 Next, since $u_3 = 0$ on $\partial \Omega$, the Hardy inequality and the 
 incompressibility condition imply
 \begin{align}
  \left\Vert \frac{u^\nu_3}{\varphi}\right\Vert_{k,p}
  \lec \Vert Z_\hh u^\nu_\hh\Vert_{k,p}
  ,
  \label{EQ.u3}
 \end{align}
 for all $p \in [1,\infty]$.
 
 Now, we fix $\nu \in (0,\bar{\nu}]$,
 and denote by $(u,p)$, instead of $(u^\nu,p^\nu)$, the smooth solution to the Navier~Stokes system
 with the viscosity~$\nu$.
 Also, we write
  \begin{align}
  	\begin{split}
  	P &= P (\Vert u\Vert_{L^{2}},\Vert u\Vert_{3,3+\delta},\Vert u\Vert_{3,6+2\delta},\Vert \eta\Vert_{2,3+\delta},\Vert \eta\Vert_{L^{\infty}})
  	\\
 	Q &= Q (\Vert u\Vert_{L^{2}},\Vert u\Vert_{3,6},\Vert \eta\Vert_{1,6},\Vert u\Vert_{2,\infty},\Vert \eta\Vert_{L^{\infty}})
 	\\
 	R  &= R (\Vert u\Vert_{L^{2}},\Vert u\Vert_{3,3+\delta},\Vert u\Vert_{2,\infty},\Vert \omega\Vert_{L^\infty})
 	,\label{pol1}
 	\end{split}
 \end{align}
 for polynomials that may change from line to line, and we write $(P,Q,R)|_{t=0} = (P_0,Q_0,R_0)$.
 We also define
 \begin{align}
 	\mathcal{M}_{0,j,p}(u_0)=\mathcal{M}_{0,j,p}
 	=\sum_{0\le |\alpha|\le j}
 	\left(\nu^{\frac{p-1}{p}}[\nabla Z^\alpha u_0]_{1-\frac{2}{p},p,x,\Omega}
 	+\nu^{\frac{1}{2}}\Vert \nabla Z^\alpha u_0\Vert_{L^{p}}
 	+\Vert Z^\alpha u_0\Vert_{L^{p}}
 	\right)
 	,\llabel{u.ini}
 \end{align}
 and
 \begin{align}
 	\mathcal{N}_{0,j,p}(\eta_0)
 	=
 	\mathcal{N}_{0,j,p}
 	=
 	\sum_{0\le |\alpha|\le j}
 	\left(\nu^{\frac{p-1}{p}}[\nabla Z^\alpha \eta_0]_{1-\frac{2}{p},p,x,\Omega}
 	+\nu^{\frac{1}{2}}\Vert \nabla Z^\alpha \eta_0\Vert_{L^{p}}
 	+\Vert Z^\alpha \eta_0\Vert_{L^{p}}
 	\right)
 	,\llabel{eta.ini}
 \end{align}
 for $j \in \mathbb{N}_0$ and $p >2$; see Section~\ref{sec.max}
 for the definitions of the seminorms $[\cdot]_{s,p,x,\Omega}$. 
 In the rest of this work, we refer to the solutions of the Euler equations
 with slip boundary conditions as the solutions $u$ of \eqref{NSE0}--\eqref{hnavierbdry} with
 $\nu = 0$.
 Now, we state the a~priori estimates.
 
 \cole 
 \begin{Proposition}[A priori estimates]
  \label{P.Ap}
  Let $\nu \in [0,\bar{\nu}]$, $\omega = \curl u$, and 
  $\eta = \omega_\hh - 2\mu u_\hh^\perp$ (see~\eqref{eta}). 
  Then, any smooth solution $u$
  to \eqref{NSE0}--\eqref{hnavierbdry} defined on $[0,T]$
  with a smooth initial datum $u_0$,
  satisfies the following:
  \begin{itemize}
   \item[i.] If $\mu \in \mathbb{R}$ and $\nu >0$, we have
   \begin{align}
   	\begin{split}
   		\Vert u&\Vert_{L^{2}}^2
    +\Vert u\Vert_{3,3+\delta}^{3+\delta}
     +\Vert u\Vert_{3,6+2\delta}^{6+2\delta}
      +\Vert \eta\Vert_{2,3+\delta}^{3+\delta}
       +\Vert \eta\Vert_{L^{\infty}}
    \\& \le C\left(P_0 
     +\mathcal{M}_{0,2,6+2\delta}^{6+2\delta}
     +\mathcal{N}_{0,1,3+\delta}^{3+\delta}
      + \int_0^t P\,ds\right),
    \label{ap1}
    \end{split}
   \end{align}
   for $t \in [0,T]$.
   \item[ii.] If $\mu \ge 0$ and $\nu >0$, we have
   \begin{align}
   	\begin{split}
   		\Vert u\Vert_{L^{2}}^2
   		+\Vert u\Vert_{3,6}^{6}
   		+\Vert \eta\Vert_{1,6}^{6}
   		+\Vert u\Vert_{2,\infty}^2
   		+\Vert \eta\Vert_{L^{\infty}}^2
   		&\le C\left(Q_0  
   		+\mathcal{M}_{0,2,6}^6
   		+\mathcal{N}_{0,0,6}^6
   		 +\int_0^t Q\,ds\right),
   		\label{ap2}
   	\end{split}
   \end{align}
for $t \in [0,T]$.
   \item[iii.] If $\nu =0$, we have
   \begin{align}
   	\begin{split}
   		\Vert u\Vert_{L^{2}}^2
   		+\Vert u\Vert_{3,3+\delta}^{3+\delta}
   		+\Vert u\Vert_{2,\infty}
   		+\Vert \omega\Vert_{L^\infty}
   		\le C\left(R_0+ 
   		 \int_0^t R\,ds\right),
   		\label{ap3}
   	\end{split}
   \end{align}
   for $t \in [0,T]$.
\end{itemize}
\end{Proposition}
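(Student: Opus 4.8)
The plan is to run a conormal energy method alongside the vorticity formulation, supplemented---when $\nu>0$---by maximal $L^p$-regularity for the heat equation to absorb the normal-derivative contributions that the conormal energy estimates cannot control, and then to close the whole system by Gronwall's inequality. The starting point in all three cases is the basic $L^2$ estimate: multiplying \eqref{NSE0} by $u$ gives $\Vert u(t)\Vert_{L^2}^2+\nu\int_0^t\Vert\nabla u\Vert_{L^2}^2\lec\Vert u_0\Vert_{L^2}^2+\int_0^t(\cdots)$, the only boundary contribution being $-2\mu\nu\int_0^t\Vert u_\hh\Vert_{L^2(\partial\Omega)}^2$, which is favorable for $\mu\ge 0$ and is absorbed by a trace inequality when $\mu<0$. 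The difficulty everywhere is that the conormal field $Z_3=\varphi\partial_z$ does not commute with $\Delta$, so the conormal energy estimates produce commutator and boundary terms and---as recorded in Proposition~\ref{P.Con}, i.e.\ \eqref{0EQ.Con}---a leftover normal-derivative term $\nu^{p/2}\int_0^t\Vert\partial_z Z^\beta u\Vert_{L^p}^p$ that cannot be soaked up by the (weak) dissipation when $p>2$.

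For the inviscid estimate \eqref{ap3} the Laplacian is absent, so \eqref{0EQ.Con} collapses to $\Vert Z^\alpha u(t)\Vert_{L^p}^p\lec\Vert Z^\alpha u_0\Vert_{L^p}^p+\int_0^t I(s)\,ds$ for $|\alpha|\le 3$, $p=3+\delta$, with $I$ a polynomial in the $R$-norms of \eqref{pol1}. To bound $I$ I would use the interpolation inequality Lemma~\ref{L03} for the products arising from the commutators (computed via Lemma~\ref{L01}) and from the convective term, \eqref{EQ.u3} to trade $u_3/\varphi$ for tangential conormal derivatives of $u_\hh$, and conormal elliptic regularity for the pressure through $-\Delta p=\div(u\cdot\nabla u)$ with the Neumann-type data read off from \eqref{NSE0} on $\partial\Omega$. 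For the $L^\infty$ part I would use that $\omega=\curl u$ is transported with stretching, $\partial_t\omega+u\cdot\nabla\omega=\omega\cdot\nabla u$, so $\Vert\omega(t)\Vert_{L^\infty}\le\Vert\omega_0\Vert_{L^\infty}+\int_0^t\Vert\nabla u\Vert_{L^\infty}\Vert\omega\Vert_{L^\infty}$; then recover $\Vert\nabla u\Vert_{L^\infty}$ from $\Vert\omega\Vert_{L^\infty}$ and $\Vert u\Vert_{2,\infty}$ by an anisotropic div--curl estimate (the tangential part of $\nabla u$ from $\Vert u\Vert_{2,\infty}$, and $\partial_z u$ from $\omega_\hh$ together with $\nabla_\hh u_3$), and close $\Vert u\Vert_{2,\infty}$ itself by an $L^\infty$ estimate along the flow for $Z^\alpha u$, $|\alpha|\le 2$, whose right-hand side (commutators, $\nabla Z^\alpha p$ in $L^\infty$) is controlled by the conormal norms already in play. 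The three bounds---conormal $L^{3+\delta}$, $\Vert\omega\Vert_{L^\infty}$, $\Vert u\Vert_{2,\infty}$---are coupled but close simultaneously by Gronwall, yielding \eqref{ap3}.

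For the viscous estimates \eqref{ap1} and \eqref{ap2} the new ingredients are the leftover term $\nu^{p/2}\int_0^t\Vert\partial_z Z^\beta u\Vert_{L^p}^p$, the boundary terms from integrating $\nu\Delta Z^\alpha u$ against $|Z^\alpha u|^{p-2}Z^\alpha u$, and the analogous conormal estimates for $\eta=\omega_\hh-2\mu u_\hh^\perp$. Since $u_3=0$ and $\partial_z u_\hh=2\mu u_\hh$ on $\partial\Omega$, one checks $\omega_\hh|_{\partial\Omega}=2\mu u_\hh^\perp$, so $\eta$ solves a parabolic equation $\partial_t\eta-\nu\Delta\eta+u\cdot\nabla\eta=(\omega\cdot\nabla u)_\hh-2\mu(\cdots)$ with a homogeneous Dirichlet condition; I would estimate $Z^\alpha\eta$ for $|\alpha|\le 2$ (resp.\ $|\alpha|\le 1$ when $p=6$) by the same energy method. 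Using $\partial_z u_\hh=\omega_\hh^\perp+\nabla_\hh u_3$ and $\partial_z u_3=-\nabla_\hh\cdot u_\hh$, the leftover normal-derivative term for $u$ is controlled by conormal derivatives of $\eta$ (via $\omega_\hh=\eta+2\mu u_\hh^\perp$) and tangential conormal derivatives of $u$; the remaining normal-derivative terms---those in the $\eta$-estimates, all carrying a positive power of $\nu$---are then absorbed, as indicated after \eqref{0EQ.Con}, through maximal $L^p$-regularity for the heat semigroup applied to $Z^\alpha u$ and to $\eta$ (Section~\ref{sec.max}). This is precisely where $\mathcal{M}_{0,2,p}$ and $\mathcal{N}_{0,j,p}$ arise: gaining one full derivative in $L^p(\Omega\times(0,T))$ costs a $W^{1-2/p,p}$ norm of the datum, encoded by the $\nu$-weighted seminorms $[\,\cdot\,]_{1-2/p,p,x,\Omega}$, while the rest is fed into the Gronwall term $P$ (resp.\ $Q$); for \eqref{ap1} one moreover propagates the two exponents $p=3+\delta$ and $p=6+2\delta$ together, the higher one needed to estimate the $L^{3+\delta}$ products via H\"older. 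When $\mu\ge 0$, \eqref{hnavierbdry} is a homogeneous Robin condition, the boundary terms carry a favorable sign, and, following \cite[Proposition~6.1]{AK1}, $\Vert u\Vert_{2,\infty}$ can be propagated directly---hence \eqref{ap2} asks one fewer conormal derivative on $\nabla u$; when $\mu<0$ these boundary terms, and the corresponding ones in the maximal-regularity argument, are not signed and are absorbed via the trace-type inequality Lemma~\ref{Lt2}. Finally $\Vert\eta\Vert_{L^\infty}$ comes from an $L^\infty$-parabolic estimate for the Dirichlet heat equation and $\Vert\nabla u\Vert_{L^\infty}$ from the anisotropic div--curl estimate; summing all resulting differential inequalities with suitable weights and applying Gronwall gives \eqref{ap1} and \eqref{ap2}.

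The hard part will be the bookkeeping that makes the coupled viscous system close. The maximal-regularity gain must be invoked with exactly the power of $\nu$ multiplying $\partial_z Z^\beta u$ in \eqref{0EQ.Con}, so that no uncontrolled power of $\nu$ is left over; the initial-data seminorms must be assembled into $\mathcal{M}_{0,j,p}$ and $\mathcal{N}_{0,j,p}$ with matching conormal orders; the $\mu<0$ boundary contributions must be absorbed through Lemma~\ref{Lt2} without losing a conormal derivative; and since the estimates for $u$, for $\eta$, and for the $L^\infty$ norms feed into one another, I would run them simultaneously under an a priori continuation hypothesis (equivalently, an induction on the conormal order) and release it at the end by Gronwall. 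The remaining pieces---the commutator algebra of Lemma~\ref{L01}, the interpolation estimate \eqref{EQ.int} of Lemma~\ref{L03}, the Hardy bound \eqref{EQ.u3}, and conormal elliptic regularity for the pressure---are routine once this structure is in place.
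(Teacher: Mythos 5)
Your proposal is correct and follows essentially the same route as the paper: conormal energy estimates for $u$ and for $\eta=\omega_\hh-2\mu u_\hh^\perp$ (with its Dirichlet condition), elliptic estimates for the pressure, $L^\infty$/$L^q\to\infty$ bounds for $\eta$, $\omega$, and $\Vert u\Vert_{2,\infty}$, maximal parabolic regularity to absorb the $\nu$-weighted normal-derivative leftovers (producing $\mathcal{M}_{0,j,p}$, $\mathcal{N}_{0,j,p}$), the trace-type Lemma~\ref{Lt2} for $\mu<0$, the dual exponents $3+\delta$, $6+2\delta$ in case (i), and a final Gronwall closure. This matches Sections~\ref{sec.co}--\ref{sec.apri} of the paper in structure and key ideas.
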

\colb
 
 To establish the a~priori bounds, 
 we prove estimates for the conormal derivatives of $u$, $\nabla u$,
 and~$\nabla p$
 in Sections~\ref{sec.co}, \ref{sec.no},
 and \ref{sec.p}, respectively. 
 Next, we establish $L^\infty$ bounds for $u$ and $\nabla u$ in Section~\ref{sec.inf}. 
 Following this, we present maximum regularity and trace estimates in Sections~\ref{sec.max} and~\ref{sec.tr}, respectively.
 In Section~\ref{sec.apri}, we conclude the proof of the
 a~priori bounds. Finally, in Section~\ref{sec.main}, we 
 prove Theorems~\ref{T01}--\ref{T04}.
 
 \startnewsection{Conormal Derivative Estimates}{sec.co}
 
 In this section, we present the conormal derivative estimates for $u$. 
 We recall the convention that the solutions of
 \eqref{NSE0} and \eqref{hnavierbdry} with $\nu=0$
 refer to the solutions for the Euler equations \eqref{euler} and \eqref{eulerb}.
 
 \cole
 \begin{Proposition}
  \label{P.Con}
  Let $\mu \in \mathbb{R}$, $\nu \in [0,\bar{\nu}]$, $p \in (2,\infty)$, and assume that $u$ is a smooth solution of \eqref{NSE0} and \eqref{hnavierbdry} on~$[0,T]$ with a smooth initial datum $u_0$.
  Then we have the inequality
  \begin{align}
  	\begin{split}
  		\Vert  u(t)&\Vert_{3,p}^{p}
  		+c_0\nu \sum_{0\le |\alpha| \le 3}\left(\int_0^t \int_\Omega \left(|\nabla Z^\alpha u|^2 |Z^\alpha u|^{p-2}
  		+ |\nabla |Z^\alpha u|^\frac{p}{2}|^2\right) \,dxds\right)
  		\\&\lec
  		\Vert u_0\Vert_{3,p}^p
  		+\int_0^t\left(
  		\Vert u\Vert_{3,p}^{p}
  		(\Vert u\Vert_{2,\infty}+\Vert u\Vert_{W^{1,\infty}})
  		+\Vert u\Vert_{3,p}^{p-1}\Vert \nabla p\Vert_{3,p}\right)\,ds
  		+\nu^{\frac{p}{2}}\int_0^t \Vert \partial_z u\Vert_{2,p}^p\,ds
  		.
  		\label{EQ.Con}
  	\end{split}
  \end{align}
where $c_0>0$ and $t \in [0,T]$. 
 \end{Proposition}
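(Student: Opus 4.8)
The strategy is a standard conormal energy estimate, but carried out in the $L^p$ framework and keeping careful track of the diffusive term. First I would fix a multi-index $\alpha$ with $|\alpha|\le 3$, apply $Z^\alpha$ to \eqref{NSE0}, and test the resulting equation against $|Z^\alpha u|^{p-2}Z^\alpha u$, integrating over $\Omega$. The time-derivative term produces $\frac1p\frac{d}{dt}\Vert Z^\alpha u\Vert_{L^p}^p$. The advection term $Z^\alpha(u\cdot\nabla u)$ is split as $u\cdot\nabla Z^\alpha u$ plus a commutator $[Z^\alpha,u\cdot\nabla]u$; the former integrates to zero after using $\nabla\cdot u=0$ together with the boundary condition $u_3=0$ on $\{z=0\}$ (so that the boundary contribution of the transport term vanishes), and the commutator is estimated by Lemma~\ref{L03} together with \eqref{EQ.u3}, yielding a bound of the form $\Vert u\Vert_{3,p}^{p}(\Vert u\Vert_{2,\infty}+\Vert u\Vert_{W^{1,\infty}})$ after factoring out one copy of $|Z^\alpha u|^{p-1}$ via H\"older. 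The pressure term $Z^\alpha\nabla p$ tested against $|Z^\alpha u|^{p-2}Z^\alpha u$ is bounded directly by $\Vert u\Vert_{3,p}^{p-1}\Vert\nabla p\Vert_{3,p}$ (there is no need to integrate by parts here, although one could).

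The heart of the matter is the diffusion term $-\nu\int_\Omega Z^\alpha\Delta u\cdot|Z^\alpha u|^{p-2}Z^\alpha u\,dx$. I would write $\Delta = \partial_{11}+\partial_{22}+\partial_{zz}$ and treat the tangential second derivatives by commuting $Z_1,Z_2$ freely (they commute with $\partial_1,\partial_2$ and with multiplication by $\varphi$), while for the normal part I use Lemma~\ref{L01}(iii)--(iv) to rewrite $Z_3^k\partial_{zz}$ in terms of $\partial_{zz}Z_3^l$ and $\partial_z Z_3^l$ with bounded coefficients. The leading piece, $-\nu\int_\Omega \Delta Z^\alpha u\cdot|Z^\alpha u|^{p-2}Z^\alpha u\,dx$, after integration by parts gives the good term $c_0\nu\int_\Omega(|\nabla Z^\alpha u|^2|Z^\alpha u|^{p-2}+|\nabla|Z^\alpha u|^{p/2}|^2)\,dx$ (the two expressions being comparable up to $p$-dependent constants via the pointwise identity $\nabla|w|^{p/2}=\frac p2|w|^{p/2-2}w\cdot\nabla w$ componentwise) \emph{plus} a boundary term on $\{z=0\}$ coming from the $\partial_z$-integration by parts. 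This boundary term is where the Navier condition \eqref{hnavierbdry}, i.e. $\partial_z u_\hh=2\mu u_\hh$, is substituted; after differentiating the boundary relation conormally (tangential $Z_\hh$ derivatives preserve it, and $Z_3$ vanishes on the boundary) one reduces the boundary integral to something controlled by lower-order conormal traces, which by a trace inequality and Young's inequality is absorbed into the bulk dissipation plus the Gronwall terms. The remaining commutator pieces from Lemma~\ref{L01} carry an \emph{extra} $\partial_z Z_3^l u$ with $l\le|\alpha|-1$ against $|Z^\alpha u|^{p-2}Z^\alpha u$; by Young's inequality with weight $\nu$ these split into a small multiple of the good dissipative term plus a term of the shape $\nu\int|\partial_z Z^\beta u|^2|Z^\alpha u|^{p-2}$ or, after a further H\"older/Young split tuned to balance the powers of $\nu$, the term $\nu^{p/2}\Vert\partial_z u\Vert_{2,p}^p$ that appears on the right-hand side of \eqref{EQ.Con}.

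The main obstacle, as the introduction itself flags, is precisely that in the $L^p$ setting with $p>2$ the conormal energy estimate does \emph{not} self-improve: the normal-derivative commutator terms cannot be absorbed by the dissipation with a clean induction as when $p=2$, so one is forced to leave the residual term $\nu^{p/2}\int_0^t\Vert\partial_z u\Vert_{2,p}^p\,ds$ on the right (to be controlled later, in the a priori estimates of Section~\ref{sec.apri}, by maximal-regularity bounds for the heat equation). Concretely, the delicate bookkeeping is: every time one integrates by parts in $z$ or uses Lemma~\ref{L01} one generates either (a) a boundary term — handled by \eqref{hnavierbdry} and a trace inequality, absorbed into $c_0\nu(\dots)$ — or (b) a bulk term with a normal derivative of order $\le|\alpha|-1$ multiplied by $\nu$ — handled by Young's inequality, producing either a further absorbable $\nu$-dissipation term or a contribution to $\nu^{p/2}\Vert\partial_z u\Vert_{2,p}^p$. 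One must verify that the weights of $\nu$ work out so that nothing of order $\nu^{1/2}$ or $\nu^0$ times a top-order normal derivative survives; this is where the choice of the exponent $p/2$ on $\nu$ is pinned down. Finally, summing over $0\le|\alpha|\le 3$ and integrating in time from $0$ to $t$ yields \eqref{EQ.Con}.
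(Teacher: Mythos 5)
Your proposal is correct and follows essentially the same route as the paper: $L^p$ conormal energy estimates on $Z^\alpha u$, with the purely tangential case producing the boundary term $-2\mu\nu\Vert Z_\hh^{\tilde\alpha}u_\hh\Vert_{L^p(\partial\Omega)}^p$ via the Navier condition (absorbed by trace plus Young when $\mu<0$), the cases containing $Z_3$ producing no boundary term since $\varphi$ vanishes at $z=0$, and the Laplacian commutator handled through Lemma~\ref{L01} and a Young inequality tuned to yield the residual $\nu^{p/2}\int_0^t\Vert\partial_z u\Vert_{2,p}^p\,ds$. The only cosmetic difference is organizational: the paper runs an induction on the conormal order rather than summing over $|\alpha|\le 3$ directly, and it disposes of the second-order normal pieces $\partial_{zz}Z_3^{k-1}Z_\hh^{\tilde\alpha}u$ by factoring $\varphi$ out of $Z^\alpha u$ (via $Z_3=\varphi\partial_z$) instead of an extra integration by parts, both of which lead to the same bounds.
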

 \colb
 
 \begin{proof}[Proof of Proposition~\ref{P.Con}]
  We establish \eqref{EQ.Con} by induction on
  the order of conormal differentiability.
  The base case is the standard $L^p$
  estimate given by
   \begin{align}
   \frac{1}{p}\frac{d}{dt}\Vert u\Vert_{L^{p}}^{p}
   +\nu\int_{\Omega} |\nabla u|^2 |u|^{p-2} \,dx
   +4\nu\frac{p-2}{p^2} \int_{\Omega} |\nabla |u|^\frac{p}{2} |^2 \,dx
   = -2\mu \nu \Vert u_h\Vert_{L^{p}(\partial \Omega)}^{p}
   -\int_{\Omega} \nabla p u |u|^{p-2} \,dx
   ,\label{L^p}
  \end{align}
  where we have used \eqref{hnavierbdry} to get  
\begin{align}
	\nu\int_{\partial \Omega} \partial_i u_i u_j |u|^{p-2} n_i
	=\nu\int_{\partial \Omega} \partial_z u_\hh \cdot u_\hh |u_\hh|^{p-2}
	=-2\mu \nu \Vert u_\hh\Vert_{L^{p}(\partial \Omega)}^{p}
	. \llabel{boundary}
\end{align}
  When $\mu < 0$, we estimate this boundary term by writing
  \begin{align}
  	|\mu| \nu \Vert u_\hh\Vert_{L^{p}(\partial \Omega)}^{p}
  	 \le 
  	  \mu \nu \Vert |u_\hh|^{\frac{p}{2}}\Vert_{L^{2}(\partial \Omega)}^{2}
  	   \lec
  	    \mu \nu \Vert \nabla |u_\hh|^{\frac{p}{2}}\Vert_{L^{2}}\Vert |u_\hh|^{\frac{p}{2}}\Vert_{L^{2}}
  ,\llabel{boundary2}
  \end{align}
  from where we invoke Young's inequality and obtain
  \begin{align}
  	|\mu| \nu \Vert u_\hh\Vert_{L^{p}(\partial \Omega)}^{p}
  	 \le 
  	  \epsilon \nu \Vert \nabla |u_\hh|^{\frac{p}{2}}\Vert_{L^{2}}^2
  	   +C_\epsilon\Vert |u_\hh|^{\frac{p}{2}}\Vert_{L^{2}}^2,
  	   \label{boundary3}
  \end{align}
  where $\epsilon>0$ is sufficiently small.
  Combining \eqref{L^p} and \eqref{boundary3} and letting $ t \in [0,T]$
   yields
   \begin{align}
   	\Vert u(t)\Vert_{L^{p}}^{p}
   	+c_0\nu \int_0^t \int_\Omega \bigl(|\nabla u|^2 |u|^{p-2}
   	+ |\nabla |u|^\frac{p}{2} |^2\bigr) \,dxds
   	\lec \Vert u_0\Vert_{L^{p}}^p +\int_0^t \Vert u\Vert_{L^{p}}^{p-1}
   	 (\Vert u\Vert_{L^{p}} + \Vert \nabla p\Vert_{L^{p}})\,ds
   	,\llabel{base}
   \end{align}
   and this concludes the base step of the induction.
  For the remaining part, we only 
  present the final step. Therefore,
  for $|\alpha'|\le 2 $, we assume that 
  \begin{align}
  	\begin{split}
  	\Vert Z^{\alpha'} &u(t)\Vert_{L^{p}}^{p}
  	+c_0\nu \int_0^t \int_{\Omega} \bigl(|\nabla Z^{\alpha'} u|^2 |Z^{\alpha'} u|^{p-2}
  	+ |\nabla |Z^{\alpha'} u|^\frac{p}{2} |^2\bigr) \,dxds
  	\\&\lec \Vert u_0\Vert_{2,p}^p +\int_0^t \bigl(
  	\Vert u\Vert_{2,p}^p 
  	(\Vert u\Vert_{W^{1,\infty}}+
  	\Vert u\Vert_{2,\infty}+1
  	)+\Vert u\Vert_{2,p}^{p-1}\Vert \nabla p\Vert_{2,p} \bigr) \,ds
  		+\nu^{\frac{p}{2}}\int_0^t \Vert \nabla u\Vert_{2,p}^p\,ds
   , \label{base2}
  	\end{split}
  \end{align}
  and aim to establish \eqref{EQ.Con}.
  
  Let $Z^\alpha$ be a conormal derivative of order three, i.e., $|\alpha|=3$,
  and first assume that $Z^\alpha = Z_\hh^{\tilde{\alpha}}$.
  This corresponds to the case where all derivatives are horizontal.
  Recalling that $Z_\hh$ commutes with $\partial_i$, for $i=1,2,3$,
  we apply $Z_\hh^{\tilde{\alpha}}$ to \eqref{NSE0} and write
  \begin{align}
   \partial_t Z^{\tilde{\alpha}}_\hh u
   -\nu \Delta Z^{\tilde{\alpha}}_\hh u
   + u \cdot \nabla Z^{\tilde{\alpha}}_\hh u
   + \nabla Z^{\tilde{\alpha}}_\hh p
   =
   u \cdot \nabla Z^{\tilde{\alpha}}_\hh u
   -
   Z^{\tilde{\alpha}}_\hh(u \cdot \nabla u)
   .
   \llabel{EQ01}
  \end{align}
  Testing this with $Z^{\tilde{\alpha}}_\hh u|Z^{\tilde{\alpha}}_\hh u|^{p-2}$ yields
  \begin{align}
  	\begin{split}
   \frac{1}{p}\frac{d}{dt}&\Vert Z^{\tilde{\alpha}}_\hh u\Vert_{L^{p}}^{p}
   +\nu\int_{\Omega} |\nabla Z^{\tilde{\alpha}}_\hh u|^2 |\nabla Z^{\tilde{\alpha}}_\hh u|^{p-2} \,dx
   +4\nu\frac{p-2}{p^2} \int_{\Omega} |\nabla |Z^{\tilde{\alpha}}_\hh u|^\frac{p}{2}|^2 \,dx
   \\&=
   -2\mu \nu \Vert \nabla Z^{\tilde{\alpha}}_\hh u_\hh\Vert_{L^{p}(\partial \Omega)}^{p}
   +
   \int_{\Omega} 
    \left(u \cdot \nabla Z^{\tilde{\alpha}}_\hh u
    -Z^{\tilde{\alpha}}_\hh(u \cdot \nabla u)\right)
      Z^{\tilde{\alpha}}_\hh u |\nabla Z^{\tilde{\alpha}}_\hh u|^{p-2}\,dx
      \\&\indeq
   -\int_{\Omega} Z_\hh^{\tilde{\alpha}} \nabla p,Z^{\tilde{\alpha}}_\hh u |\nabla Z^{\tilde{\alpha}}_\hh u|^{p-2} \,dx
   ,
   \label{EQ54}
   \end{split}
  \end{align}
  where we have used~\eqref{hnavierbdry}.
  Now, we repeat the steps leading to \eqref{boundary3} for $Z_\hh^{\tilde{\alpha}} u_\hh$
  and obtain
  \begin{align}
   |\mu| \nu \Vert Z^{\tilde{\alpha}}_\hh u\Vert_{L^{p}(\partial \Omega)}^{p}
   \le
   \epsilon \nu \Vert \nabla |Z^{\tilde{\alpha}}_\hh u|^{\frac{p}{2}}\Vert_{L^2}^2
   +C_\epsilon\Vert u\Vert_{3,p}^{p}
   ,\llabel{EQ144}
  \end{align}
  where $\epsilon>0$ is sufficiently small.
  Next, we use H\"older inequality to estimate
  the remaining terms on the right-hand side of \eqref{EQ54}.
  It follows that
  \begin{align}
     \int_{\Omega} Z_\hh^{\tilde{\alpha}} \nabla p Z^{\tilde{\alpha}}_\hh u |\nabla Z^{\tilde{\alpha}}_\hh u|^{p-2}\,dx
     \lec
      \Vert \nabla p\Vert_{3,p}\Vert Z_\hh^{\tilde{\alpha}}u\Vert_{L^{p}}^{p-1}	
     ,\llabel{conpre1}
  \end{align}
  and
  \begin{align}
     \int_{\Omega} \left(u \cdot \nabla Z^{\tilde{\alpha}}_\hh u
     -Z^{\tilde{\alpha}}_\hh(u \cdot \nabla u)\right)
      Z^{\tilde{\alpha}}_\hh u |\nabla Z^{\tilde{\alpha}}_\hh u|^{p-2} \,dx
      \lec
      \Vert u \cdot \nabla Z^{\tilde{\alpha}}_\hh u-Z^{\tilde{\alpha}}_\hh(u \cdot \nabla u)\Vert_{L^{p}}
       \Vert Z_\hh^{\tilde{\alpha}}u\Vert_{L^{p}}^{p-1}	
      . \label{concom1}	
  \end{align}
  We expand the first term on the right-hand side of \eqref{concom1} as
  \begin{align}
   u \cdot \nabla Z^{\tilde{\alpha}}_\hh u
   -
   Z^{\tilde{\alpha}}_\hh(u \cdot \nabla u)
   =
   -\sum_{1\le |\tilde{\beta}| \le 3} 
   {\tilde{\alpha} \choose \tilde{\beta}}
   (Z^{\tilde{\beta}}_\hh u_\hh 
   \cdot 
   \nabla_\hh Z^{\tilde{\alpha}-\tilde{\beta}}_\hh u
   + 
   Z^{\tilde{\beta}}_\hh u_3 
   \partial_z Z^{\tilde{\alpha}-\tilde{\beta}}_\hh u)
   .
   \llabel{EQ45}                              
  \end{align}
  Note that
  \begin{align}
   \Vert 
   Z^{\tilde{\beta}}_\hh u_\hh 
   \cdot 
   \nabla_\hh Z^{\tilde{\alpha}-\tilde{\beta}}_\hh u
   \Vert_{L^{p}}
   \lec
   \Vert u\Vert_{2,\infty}\Vert u\Vert_{3,p}
   \comma 1 \le |\tilde{\beta}| \le 3,
   \label{EQ30}
  \end{align} 
  Proceeding to the term involving $\partial_z$, 
  we insert $\varphi \frac{1}{\varphi}$, obtaining
  \begin{equation}
   \label{EQ143}
   \left\Vert Z_\hh^{\tilde{\beta}} \frac{u_3}{\varphi} Z_3 Z_\hh^{\tilde{\alpha} - {\tilde{\beta}}} u\right\Vert_{L^{p}}  
   \lec 
   \begin{cases}
    \bigl\Vert Z \frac{u_3}{\varphi}\bigr\Vert_{L^\infty}\Vert u\Vert_{3,p}
    \lec \Vert u\Vert_{2,\infty}\Vert u\Vert_{3,p}, & |{\tilde{\beta}}|=1 \\
    \bigl\Vert Z_\hh \frac{u_3}{\varphi}\bigr\Vert_{1,p}\Vert Z_\hh Z_3 u\Vert_{L^\infty}
    \lec \Vert u\Vert_{3,p}\Vert u\Vert_{2,\infty}, & |{\tilde{\beta}}|=2,
   \end{cases}
  \end{equation}
  using Hardy's inequality and the divergence-free condition.
  To conclude, note that 
  \begin{align}
   \Vert Z_\hh^{\tilde{\beta}} u_3 \partial_z Z_\hh^{\tilde{\alpha} - {\tilde{\beta}}} u\Vert_{L^{p}}  
   \lec 
   \Vert u\Vert_{3,p}\Vert \partial_z u\Vert_{L^\infty}
   \comma   |{\tilde{\beta}}|=3
   .\label{EQ33} 
  \end{align}
  Now, we collect \eqref{EQ54}--\eqref{EQ33} and integrate in time 
  so that we obtain
  \begin{align}
  	\begin{split}
   \Vert Z^{\tilde{\alpha}}_\hh u(t)&\Vert_{L^{p}}^{p}
   +
   c_0\nu\int_0^t \int_{\Omega}
    \left((\nabla |Z^{\tilde{\alpha}}_\hh u|^{\frac{p}{2}})^2 
   +
    |\nabla Z^{\tilde{\alpha}}_\hh u|^2 |\nabla Z^{\tilde{\alpha}}_\hh u|^{p-2}
   \right)\,dxds
   \\&\lec
   \Vert u_0\Vert_{3,p}^{p}
   +
   \int_0^t\left(
   \Vert u\Vert_{3,p}^{p}
   (\Vert u\Vert_{2,\infty}+\Vert u\Vert_{W^{1,\infty}})
   +\Vert u\Vert_{3,p}^{p-1}\Vert \nabla p\Vert_{3,p}\right)\,ds
   ,  
   \llabel{EQ38}
   \end{split}   
  \end{align}
  for~$t \in [0,T]$.
    
  Now, let $Z^\alpha = Z^{\tilde{\alpha}}_\hh Z^k_3$ 
  where $1\le k \le 3$. It follows that $Z^\alpha u$ solves
  \begin{align}
   Z^\alpha u_t
   - \nu \Delta Z^\alpha u
   + u\cdot \nabla Z^\alpha u
   = 
   u\cdot \nabla Z^\alpha u - Z^\alpha(u\cdot \nabla u) 
   - Z^\alpha \nabla p
   + \nu Z^\alpha \Delta u
   - \nu \Delta Z^\alpha u
   .
   \label{EQ46}
  \end{align}
  We multiply this equation by $Z^\alpha u |Z^\alpha u|^{p-2}$, 
  the left-hand side of \eqref{EQ46} gives
  \begin{align}
  	\begin{split}
  		\frac{1}{p}&\frac{d}{dt}\Vert Z^\alpha u\Vert_{L^{p}}^{p}
  		+\nu\int_{\Omega} |\nabla Z^\alpha u|^2 |Z^\alpha u|^{p-2} \,dx
  		+4\nu\frac{p-2}{p^2} \int_{\Omega} |\nabla |Z^\alpha u|^\frac{p}{2}|^2 \,dx
  		\\&=
  		\int_{\Omega} \left(u \cdot \nabla Z^\alpha u
  		-Z^\alpha(u \cdot \nabla u)\right)
  		Z^\alpha u |\nabla Z^\alpha u|^{p-2} \,dx
  		-\int_\Omega Z^\alpha \nabla p Z^\alpha u |\nabla Z^\alpha u|^{p-2} \,dx
  		\\&\indeq
  		+\nu \int_{\Omega} (Z^\alpha \Delta u
  		- \Delta Z^\alpha u) Z^\alpha u |\nabla Z^\alpha u|^{p-2}\,dx
  		.
  		\label{conork}
  	\end{split}
  \end{align}
  We note that \eqref{conork} does not have a boundary term since 
  $Z_3 = \varphi \partial_z = 0$ on~$\partial \Omega$.
  Now, we rewrite the quadratic commutator term as 
  \begin{align}
   \begin{split}
    u \cdot \nabla Z^\alpha u &- Z^\alpha(u \cdot \nabla u)
    =
    u \cdot \nabla Z^\alpha u - u \cdot Z^\alpha \nabla u
    -
    \sum_{1 \le |\beta|\le |\alpha|}
    {\alpha \choose \beta}
    Z^\beta u \cdot Z^{\alpha-\beta} \nabla u
    \\&=
    u_3 \partial_z Z^\alpha u - u_3 Z^\alpha \partial_z u    
    -
    \sum_{1\le |\beta|\le |\alpha|}
    {\alpha \choose \beta}
    Z^\beta u \cdot Z^{\alpha-\beta} \nabla u
    \\&=
    -
    \sum_{j=0}^{k-1}
    \tilde{c}^k_{j,\varphi} u_3 \partial_z Z^{\tilde{\alpha}}_\hh Z^j_3 u  
    -
    \sum_{1\le |\beta|\le |\alpha|}
    {\alpha \choose \beta}
    Z^\beta u \cdot Z^{\alpha-\beta} \nabla u
    = I_1 + I_2
    ,
    \llabel{EQ13}        
   \end{split}
  \end{align}
  recalling that $|\alpha|=3$.
  Next, we multiply $I_1$ by $\varphi \frac{1}{\varphi}$ and write
  \begin{align}
   \sum_{j=0}^{k-1}
   \left\Vert \tilde{c}^k_{j,\varphi} \frac{u_3}{\varphi} Z^{\tilde{\alpha}}_\hh Z^{j+1}_3 u\right\Vert_{L^{p}}  
   \lec
   \left\Vert \frac{u_3}{\varphi}\right\Vert_{L^\infty}\Vert u\Vert_{3,p}
   \lec
   \Vert u\Vert_{1,\infty}\Vert u\Vert_{3,p}
   ,
   \label{EQ56}
  \end{align}
  where we have	used~\eqref{EQ.u3}.
  Proceeding to $I_2$, we have  
  \begin{align}
   I_2 = -
   \sum_{1\le |\beta| \le |\alpha|}
   {\alpha \choose \beta} 
   (Z^{\beta} u_\hh 
   \cdot 
   \nabla_\hh Z^{\alpha-\beta} u
   + 
   Z^{\beta} u_3 
   Z^{\alpha-\beta} \partial_z u)
   = I_{21} + I_{22}
   .
   \llabel{EQ57}
  \end{align}
  We estimate $I_{21}$
  by utilizing the bounds in~\eqref{EQ30},
  while for $I_{22}$, we employ \eqref{EQ33}
  when~$|\beta|=3$ and  Lemma~\ref{L01}(i) otherwise. 
  For the commutator terms resulting from 
  \eqref{EQL02}$_1$,
  we note that
  \begin{align}
   \frac{Z_3 u_3}{\varphi} = \nabla_\hh \cdot u_\hh
   \text{ and }
   \frac{1}{\varphi} Z_\hh = Z_\hh \frac{1}{\varphi}
   ,
   \llabel{EQ32}
  \end{align}
  so that we may proceed as in \eqref{EQ143}.
  It follows that
  \begin{align}
   \int_{\Omega} \left(u \cdot \nabla Z^\alpha u
   -Z^\alpha(u \cdot \nabla u)\right)
   Z^\alpha u |\nabla Z^\alpha u|^{p-2}) \,dx
   \lec 
   \Vert u\Vert_{3,p}^{p}
   (\Vert u\Vert_{2,\infty}+\Vert u\Vert_{W^{1,\infty}}).
   \label{EQ58}
  \end{align}
  Recalling \eqref{EQ46}, we now
  bound the pressure term by writing
  \begin{align}
   \int_{\Omega} Z^\alpha \nabla p Z^\alpha u|Z^\alpha u|^{p-2} \,dx
   \lec
   \Vert \nabla p\Vert_{3,p}\Vert u\Vert_{3,p}^{p-1}
   .
   \label{EQ47}
  \end{align}
  
  It only remains to estimate the commutator term involving the Laplacian. 
  Employing Lemma~\ref{L01}, we have 
  \begin{align}
   \begin{split}
    \nu Z^\alpha \Delta u
    - \nu \Delta Z^\alpha u
    =&
    \nu Z^\alpha \partial_{zz} u - \nu \partial_{zz} Z^\alpha u
    =
    \nu Z^{\tilde{\alpha}}_\hh (Z_3^k \partial_{zz} u - \partial_{zz} Z_3^k u)
    \\
    =&
    \nu
    \sum_{j=0}^{k-1}
    \sum_{l=0}^{j}
    \left(
    c^j_{l,\varphi} c^k_{j,\varphi} \partial_{zz} Z^l_3 Z^{\tilde{\alpha}}_\hh u 
    +
    (c^j_{l,\varphi})' c^k_{j,\varphi} \partial_{z} Z^l_3 Z^{\tilde{\alpha}}_\hh u
    \right)
    \\
    &+
    \nu
    \sum_{l=0}^{k-1}
    \left(
    c^k_{l,\varphi}\partial_{zz} Z^l_3 Z^{\tilde{\alpha}}_\hh u  
    +
    (c^k_{l,\varphi})'\partial_{z} Z^l_3 Z^{\tilde{\alpha}}_\hh u 
    \right)
    \label{EQ51}
    .
   \end{split}
  \end{align}
  We first consider the term
  \begin{align}
  	2\nu c^k_{k-1,\varphi}
  	\partial_{zz} Z^{k-1}_3 Z^{\tilde{\alpha}}_\hh u
  	 =2\nu \frac{c^k_{k-1,\varphi}}{\varphi}\partial_z Z^\alpha u
  	  -2\nu \frac{c^k_{k-1,\varphi}\varphi'}{\varphi} \partial_z Z^{k-1}_3 Z^{\tilde{\alpha}}_\hh u,
  	\llabel{EQ52}
  \end{align}
   which corresponds to
  $(j,l)=(k-1,j)$ in the first sum and 
  $l=k-1$ in the second sum.
  Multiplying by $Z^\alpha u |Z^\alpha u|^{p-2}$ yields
  \begin{align}
   \begin{split}
    \frac{4\nu}{p}&
    \int_{\Omega} \frac{c^k_{k-1,\varphi}}{\varphi}|Z^\alpha u|^\frac{p}{2} \partial_z |Z^\alpha u|^\frac{p}{2}
    \,dx
    -2\nu
    \int_{\Omega} c^k_{k-1,\varphi}\varphi' |\partial_z Z^{k-1}_3 Z^{\tilde{\alpha}}_\hh u|^2
    |Z^\alpha u|^{p-2}
    \,dx
    \\
    &\le
    \epsilon \nu \Vert \partial_z |Z^\alpha u|^{\frac{p}{2}}\Vert_{L^{2}}^2
     +C_{\epsilon} \Vert Z^\alpha u\Vert_{L^{p}}^p
      +C\nu^\frac{p}{2} \Vert \partial_z Z^{k-1}_3 Z^{\tilde{\alpha}}_\hh u\Vert_{L^{p}}^p
     ,
    \label{EQ53}
   \end{split}
  \end{align}
  for $\epsilon>0$ sufficiently small and $C_\epsilon>0$. 
  We may replicate \eqref{EQ53} for the terms in \eqref{EQ51} and obtain
  \begin{align}
   \int_\Omega \left(\nu Z^\alpha \Delta u
   - \nu \Delta Z^\alpha u\right) Z^\alpha u|Z^\alpha u|^{p-2} \,dx
   \le 
   \epsilon \nu \Vert \partial_z |Z^\alpha u|^{\frac{p}{2}}\Vert_{L^{2}}^2
   +C_{\epsilon} \Vert u\Vert_{3,p}^p
   +C\nu^\frac{p}{2} \Vert \partial_z u\Vert_{2,p}^p
   .
   \label{EQ61}
  \end{align}
  Therefore, collecting \eqref{EQ46}, \eqref{conork}, \eqref{EQ56}, \eqref{EQ58}, \eqref{EQ47},
  and \eqref{EQ61},
  absorbing all the factors of 
  $\epsilon \nu \Vert \partial_z |Z^\alpha u|^{\frac{p}{2}}\Vert_{L^{2}}^2$, and integrating in time,
  we obtain 
  \begin{align}
  	\begin{split}
  		\Vert  Z^\alpha u(t)&\Vert_{L^p}^{p}
  		+c_0\nu \int_0^t \int_{\Omega} \left(|\nabla Z^\alpha u|^2 |Z^\alpha u|^{p-2}
  		+ |\nabla |Z^\alpha u|^\frac{p}{2}|^2\right) \,dxds
  		\\&\lec
  		\Vert u_0\Vert_{3,p}^p
  		 +\int_0^t\left(
  		 \Vert u\Vert_{3,p}^{p}
  		 (\Vert u\Vert_{2,\infty}+\Vert u\Vert_{W^{1,\infty}})
  		 +\Vert u\Vert_{3,p}^{p-1}\Vert \nabla p\Vert_{3,p}\right)\,ds
  		 +\nu^{\frac{p}{2}}\int_0^t \Vert \nabla u\Vert_{2,p}^p\,ds
  		 ,
  		\llabel{finalconormal}
  	\end{split}
  \end{align}
  for $t \in [0,T]$. Now, we sum over $\alpha$ using \eqref{base2} and conclude~\eqref{EQ.Con}.
 \end{proof}
 
 \startnewsection{Normal Derivative Estimates}{sec.no}
 
 In this section, we present the conormal derivative bounds for $\nabla u$.
 We note that these estimates are only required when $\nu >0$,
 i.e., for \eqref{ap1} and~\eqref{ap2}.
 
 Rather than analyzing the evolution of $\partial_z u$, we define
 \begin{align}
  \eta = \omega_\hh - 2\mu u_\hh^\perp,
  \label{eta}
 \end{align}
 where $\omega = \curl u$ and 
 $u_\hh^\perp = (-u_2,u_1)^T$. 
 As in~\cite{AK1, MR1}, $\eta$ solves
 \begin{align}
  \begin{split}  
   \eta_t
   -\nu \Delta \eta
   +u\cdot \nabla \eta
   &=
   \omega\cdot \nabla u_\hh
   +2\mu \nabla_\hh^\perp p
  \inin{\Omega}
   \\
   \eta &= 0
   \comma z=0
   ,
   \llabel{EQ.eta}   
  \end{split}
 \end{align}
 where $\nabla_\hh^\perp p = (-\partial_2,\partial_1)^T$.
 Unlike $\omega$ and $\partial_z u$, we have that $\eta$ vanishes on the boundary, and
 \begin{align}
  \Vert \partial_z u\Vert_{m,p}
  \lec
  \Vert \eta\Vert_{m,p}+\Vert u\Vert_{m+1,p}
  \comma
  \Vert \partial_z u\Vert_{L^\infty}
  \lec
  \Vert \eta\Vert_{L^\infty}+\Vert u\Vert_{1,\infty},
  \label{EQ62}
 \end{align}
 implying that $\partial_z u$
 is bounded by the conormal derivatives of $u$ and~$\eta$.
 As a further consequence, we also have
 \begin{align}
 	\Vert \omega_\hh\Vert_{L^\infty}\lec \Vert \eta\Vert_{L^\infty}+ \Vert u\Vert_{L^\infty}
 	\text{ and }
 	\Vert \omega_\hh\Vert_{m,p}\lec \Vert \eta\Vert_{m,p} + \Vert u\Vert_{m,p}
 	\comma m \in \mathbb{N}_0\llabel{EQ07}
 	,
 \end{align}
 and
 \begin{align}
 	\Vert \omega_3\Vert_{L^\infty} \lec \Vert u\Vert_{1,\infty}
 	\text{ and }
 	\Vert \omega_3\Vert_{m,p} \lec \Vert u\Vert_{m+1,p}  
 	\comma m \in \mathbb{N}_0,
 	\label{EQ08}
 \end{align}
 since $\omega_3 = \partial_1 u_2 - \partial_2 u_1$. 
 
 The following proposition is the main result of this section.
 
 \cole
 \begin{Proposition}
  \label{P.Nor}
  Let $\mu \in \mathbb{R}$, $\nu \in (0,\bar{\nu}]$, $p \in (2,\infty)$, and 
  assume that $u$ is a smooth solution of \eqref{NSE0}--\eqref{hnavierbdry} on~$[0,T]$ with a smooth initial datum $u_0$.
  Then we have the inequality
  \begin{align}
   \begin{split}
    \Vert \eta(t)&\Vert_{1,p}^2
    +
    c_0 \nu \sum_{0\le |\alpha| \le 1}
    \left(
    \int_0^t \int_\Omega
    \left(
    |\nabla Z^\alpha \eta|^2 |Z^\alpha\eta|^{p-2} + |\nabla |Z^\alpha\eta|^{\frac{p}{2}}|^2 
    \right) \,dxds\right)
    \\&
    \lec
    \Vert \eta_0\Vert_{1,p}^p
    + \int_0^t \left(
    \Vert \eta\Vert_{1,p}^p
    (\Vert u\Vert_{2,\infty}+\Vert \eta\Vert_{L^\infty}+1)
    +\Vert \eta\Vert_{1,p}^{p-1}
    \Vert u\Vert_{2,p}(\Vert \eta\Vert_{L^\infty}
    +\Vert u\Vert_{2,\infty})\right)
    \,ds
    \\&\indeq
    +\int_0^t \Vert \eta\Vert_{1,p}^{p-1}\Vert p\Vert_{2,p} \,ds
    +\nu^{\frac{p}{2}}\int_0^t \Vert \partial_z \eta\Vert_{L^{p}}^p \,ds
    ,\label{EQ.Nor}
   \end{split}
  \end{align}
where $c_0>0$ and $t \in [0,T]$. Moreover,
for $|\alpha| =2$, we have 
  \begin{align}
	\begin{split}
		&\Vert Z^\alpha \eta(t)\Vert_{L^p}^2
		+
		c_0 \nu \sum_{\alpha}
		 \left(
		  \int_0^t \int_\Omega
		   \left(
		    |\nabla Z^\alpha \eta|^2 |Z^\alpha\eta|^{p-2} + |\nabla |Z^\alpha\eta|^{\frac{p}{2}}|^2 
		     \right) \,dxds\right)
		\\&
		\lec
		\Vert Z^\alpha\eta_0\Vert_{2,p}^p
		+ \int_0^t \left(
		\Vert \eta\Vert_{2,p}^p
		(\Vert u\Vert_{2,\infty}+\Vert \eta\Vert_{L^\infty}+1)
		+\Vert \eta\Vert_{2,p}^{p-1}
		\Vert u\Vert_{3,2p}(\Vert \eta\Vert_{L^\infty}
		+\Vert u\Vert_{2,\infty})
		\right)
		\,ds
		\\&\indeq
		+\int_0^t \Vert \eta\Vert_{2,p}^{p-1}\Vert p\Vert_{3,p} \,ds
		+\nu^{\frac{p}{2}}\int_0^t \Vert \partial_z \eta\Vert_{1,p}^p \,ds
		,\label{EQ.Nor2}
	\end{split}
\end{align}
 
\end{Proposition}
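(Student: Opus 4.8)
The plan is to prove \eqref{EQ.Nor} and \eqref{EQ.Nor2} by induction on the order of conormal differentiation, performing $L^p$-based energy estimates on the evolution equation for $\eta$ (see~\eqref{eta}), exactly in the spirit of the proof of Proposition~\ref{P.Con}. The structural point that makes the argument work — and the reason for propagating $\eta$ rather than $\partial_z u$, as in~\cite{AK1, MR1} — is that $\eta$ vanishes on $\{z=0\}$, and hence so does $Z^\alpha\eta$ for every $\alpha$ with $0\le|\alpha|\le 2$: if $Z^\alpha=Z^{\tilde\alpha}_\hh Z_3^k$ with $k\ge 1$ this follows from $\varphi(0)=0$, while if $k=0$ it follows because a horizontal derivative of a function vanishing on the flat boundary again vanishes there. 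Consequently, testing the equation for $Z^\alpha\eta$ with $Z^\alpha\eta|Z^\alpha\eta|^{p-2}$ produces no boundary term from the Laplacian (in contrast with the estimates for $u$, where the Navier condition \eqref{hnavierbdry} forced the boundary terms handled through \eqref{boundary3}; note that, unlike for $u$, no sign restriction on $\mu$ is needed here since $\eta$ satisfies a homogeneous Dirichlet condition), while the transport term drops by $\div u=0$ together with $u_3|_{z=0}=0$. The base case $|\alpha|=0$ is then the plain $L^p$ estimate: the diffusion yields $c_0\nu\int_\Omega(|\nabla\eta|^2|\eta|^{p-2}+|\nabla|\eta|^{p/2}|^2)$, and the forcing terms $\omega\cdot\nabla u_\hh$ and $2\mu\nabla_\hh^\perp p$ are bounded in $L^p$ by H\"older after writing $\omega\cdot\nabla u_\hh=\omega_\hh\cdot\nabla_\hh u_\hh+\omega_3\partial_z u_\hh$ and using $\omega_\hh=\eta+2\mu u_\hh^\perp$ together with \eqref{EQ62} and \eqref{EQ08}.

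For the inductive step I would apply $Z^\alpha$ to the $\eta$-equation and rewrite it as
\begin{align*}
 \partial_t Z^\alpha\eta-\nu\Delta Z^\alpha\eta+u\cdot\nabla Z^\alpha\eta
 &=\bigl(u\cdot\nabla Z^\alpha\eta-Z^\alpha(u\cdot\nabla\eta)\bigr)+Z^\alpha(\omega\cdot\nabla u_\hh)\\
 &\quad+2\mu Z^\alpha\nabla_\hh^\perp p+\nu\bigl(Z^\alpha\Delta\eta-\Delta Z^\alpha\eta\bigr),
\end{align*}
then test with $Z^\alpha\eta|Z^\alpha\eta|^{p-2}$ and integrate over $\Omega$. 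The transport commutator is treated precisely as in the proof of Proposition~\ref{P.Con}: writing $u\cdot\nabla=u_\hh\cdot\nabla_\hh+(u_3/\varphi)Z_3$ and using that $Z_\hh$ and $Z_3$ commute with $\nabla_\hh$, it reduces to a sum of terms $Z^\beta u_\hh\cdot\nabla_\hh Z^{\alpha-\beta}\eta$ and $Z^\beta(u_3/\varphi)\,Z_3 Z^{\alpha-\beta}\eta$ with $1\le|\beta|\le|\alpha|$, which I would bound using Lemma~\ref{L03}, the Hardy inequality and \eqref{EQ.u3} (to control $u_3/\varphi$ and its conormal derivatives in $L^\infty$), and \eqref{EQ62}. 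The forcing $Z^\alpha(\omega\cdot\nabla u_\hh)$ is expanded by Leibniz after substituting $\omega_\hh=\eta+2\mu u_\hh^\perp$, writing $\partial_z u_\hh$ in terms of $\omega_\hh$ and $\nabla_\hh u_3$, and using $\partial_z u_3=-\nabla_\hh\cdot u_\hh$, so that every summand becomes a product of a conormal derivative of $\eta$ (of order $\le|\alpha|$) and a conormal derivative of $u$ (of order $\le|\alpha|+1$), closed by Lemma~\ref{L03}, \eqref{EQ62}, \eqref{EQ08}, and the conormal Gagliardo--Nirenberg inequality \eqref{EQgag}. Together with the pressure forcing, which contributes $\Vert p\Vert_{|\alpha|+1,p}\Vert\eta\Vert_{|\alpha|,p}^{p-1}$ by H\"older, these produce all the right-hand side terms of \eqref{EQ.Nor}--\eqref{EQ.Nor2}; at the top order $|\alpha|=2$ the borderline summands — those in which all derivatives fall on the high-order factor — cannot be bounded by placing that factor in $L^\infty$, and I would instead split them by H\"older into $L^{2p}\times L^{2p}$ pairs, using $\Vert Z_\hh\eta\Vert_{L^{2p}}\lec\Vert\eta\Vert_{2,p}$ (valid since $p>2$); this is exactly what forces the $\Vert u\Vert_{3,2p}$ norm into \eqref{EQ.Nor2}.

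The main obstacle is the commutator between $Z^\alpha$ and the Laplacian, which is where the argument departs from the $L^2$ theory of~\cite{AK1}. When $Z^\alpha=Z^{\tilde\alpha}_\hh$ this commutator vanishes; otherwise, with $Z^\alpha=Z^{\tilde\alpha}_\hh Z_3^k$ and $1\le k\le|\alpha|$, Lemma~\ref{L01}(iii)--(iv) writes $\nu(Z^\alpha\Delta\eta-\Delta Z^\alpha\eta)$ as a sum whose worst term is $\sim\nu(c^k_{k-1,\varphi}/\varphi)\,\partial_z Z^\alpha\eta$. The crucial observation, identical to the one used in \eqref{EQ53}, is that $\varphi^{-1}Z^\alpha\eta=\partial_z Z_3^{k-1}Z^{\tilde\alpha}_\hh\eta$, so that testing this term against $Z^\alpha\eta|Z^\alpha\eta|^{p-2}$ and applying H\"older followed by Young's inequality (twice) yields a bound of the form $\epsilon\nu\Vert\partial_z|Z^\alpha\eta|^{p/2}\Vert_{L^2}^2+C_\epsilon\Vert\eta\Vert_{|\alpha|,p}^p+C\nu^{p/2}\Vert\partial_z Z_3^{k-1}Z^{\tilde\alpha}_\hh\eta\Vert_{L^p}^p$; the remaining, lower-order summands are handled the same way and contribute terms already present on the right. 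Unlike in the $L^2$ case, the dissipation on the left controls only the quadratic quantity $\nu\int_\Omega|\nabla Z^\alpha\eta|^2|Z^\alpha\eta|^{p-2}$, not $\nu\Vert\partial_z Z^\alpha\eta\Vert_{L^p}^p$, so the degree-$p$ term $\nu^{p/2}\Vert\partial_z\eta\Vert_{|\alpha|-1,p}^p$ cannot be absorbed and must remain on the right-hand side — it is precisely this term that is controlled later by the maximal regularity estimates. Absorbing the factors $\epsilon\nu\Vert\partial_z|Z^\alpha\eta|^{p/2}\Vert_{L^2}^2$ into the left, integrating in time, and summing over $\alpha$ with $0\le|\alpha|\le 1$ gives \eqref{EQ.Nor}; rerunning the same argument once more for each $|\alpha|=2$, now using \eqref{EQ.Nor} as the inductive hypothesis for the lower-order contributions, gives \eqref{EQ.Nor2}.
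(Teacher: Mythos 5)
Your proposal is correct and follows essentially the same route as the paper: $L^p$ energy estimates on the equations for $Z^\alpha\eta$ with the homogeneous Dirichlet condition eliminating boundary terms, the transport commutator handled by conormalizing $u_3/\varphi$ via Hardy and \eqref{EQ.u3}, the forcing via \eqref{EQ62}--\eqref{EQ08} and Lemma~\ref{L03}, the Laplacian commutator reduced through Lemma~\ref{L01} to the absorbed $\epsilon\nu\Vert\partial_z|Z^\alpha\eta|^{p/2}\Vert_{L^2}^2$ plus the non-absorbable $\nu^{p/2}\Vert\partial_z\eta\Vert_{|\alpha|-1,p}^p$, and the borderline term $Z^\alpha u_3\,\partial_z\eta$ at $|\alpha|=2$ split in $L^{2p}\times L^{2p}$, which is exactly how $\Vert u\Vert_{3,2p}$ enters \eqref{EQ.Nor2}. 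The only slip is the parenthetical claim $\Vert Z_\hh\eta\Vert_{L^{2p}}\lec\Vert\eta\Vert_{2,p}$; the bound actually furnished by \eqref{EQgag} (and used in the paper) is $\Vert Z\eta\Vert_{L^{2p}}\lec\Vert\eta\Vert_{L^\infty}^{1/2}\Vert\eta\Vert_{2,p}^{1/2}$, which is what you should use here and which still yields the stated right-hand side.
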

\colb
 
 \begin{proof}[Proof of Proposition~\ref{P.Nor}]
  
  We start with $L^p$ estimates obtaining
  \begin{align}
  	\begin{split}
  		\Vert \eta(t)&\Vert_{L^p}^p
  		+\nu \int_0^t \int_\Omega
  		\left(|\nabla \eta|^2 |\eta|^{p-2} + |\nabla |\eta|^{\frac{p}{2}}|^2 \right) \,dxds
  		\\&
  		\lec
  		\Vert \eta_0\Vert_{L^p}^p
  		+\int_0^t 
  		\left(\Vert \eta\Vert_{L^p}^p
  		(\Vert \eta\Vert_{L^\infty}+\Vert u\Vert_{1,\infty}) 
  		+\Vert \eta\Vert_{L^p}^{p-1} 
  		(\Vert u\Vert_{1,p}\Vert \eta\Vert_{L^\infty}
  		+ \Vert u\Vert_{1,p}\Vert u\Vert_{1,\infty}
  		+\Vert p\Vert_{1,p})
  		\right)\,ds
  		.
  		\label{EQ09}
  	\end{split}
  \end{align}
  where we have used \eqref{EQ62}--\eqref{EQ08} to write
  \begin{align}
  	\Vert \omega \cdot \nabla u_\hh\Vert_{L^p}
  	 \lec
  	  \Vert \omega\Vert_{L^{\infty}}\Vert \nabla u_\hh\Vert_{L^{p}}
  	   \lec
  	    (\Vert \eta\Vert_{L^{\infty}}+\Vert u\Vert_{1,\infty})(\Vert \eta\Vert_{L^p}+\Vert u\Vert_{1,p})
  .\label{EQM09}
  \end{align}
  Now, for $\alpha \in \mathbb{N}^3_0$, $Z^\alpha \eta$ solves
  \begin{align}
  	\begin{split}
   (\partial_t -\nu\Delta+u\cdot \nabla )Z^\alpha \eta
   &=
   Z^\alpha (\omega\cdot \nabla u_\hh)
   +2\mu Z^\alpha \nabla_\hh^\perp p
   +(u\cdot \nabla Z^\alpha \eta
   -Z^\alpha (u \cdot \nabla \eta))
   +\nu (Z^\alpha \Delta \eta
   -\Delta Z^\alpha \eta)
   \\
   &=\sum_{i=1}^{4} \mathcal{R}_{\alpha,i}
   ,
   \label{EQ63}
   \end{split}
  \end{align}
  with $Z^\alpha \eta |_{\partial \Omega} = 0$.
  Upon testing with $Z^\alpha \eta |Z^\alpha \eta|^{p-2}$,
  we obtain
  \begin{align}
  	\frac{1}{p}\frac{d}{dt}\Vert Z^\alpha\eta\Vert_{L^p}^p
  	+\nu \int_\Omega
  	\left(|\nabla Z^\alpha\eta|^2 |Z^\alpha\eta|^{p-2} 
  	+ 4\frac{p-2}{p^2}|\nabla |Z^\alpha\eta|^{\frac{p}{2}}|^2 \right) \,dx
  	=\sum_{i=1}^{4} (\mathcal{R}_{\alpha,i}, Z^\alpha \eta |Z^\alpha \eta|^{p-2})
  	.
  	\llabel{EQM009}
  \end{align}
  First, we consider a single horizontal derivative, i.e., 
  $\alpha = (\alpha_\hh,0)$
  where $|\alpha_\hh| =1$. 
  When this is the case, 
  we have $\mathcal{R}_{\alpha,4} = 0$ and 
  \begin{align}
  	\sum_{i=1}^{3} 
  	 (\mathcal{R}_{\alpha,i}, Z^\alpha \eta |Z^\alpha \eta|^{p-2})
  	 \lec
  	  \Vert \eta\Vert_{1,p}^{p-1}
  	   (\Vert Z^\alpha (\omega \cdot \nabla u_\hh)\Vert_{L^{p}}
  	    +\Vert p\Vert_{2,p}+\Vert u\cdot \nabla Z^\alpha \eta
  	    -Z^\alpha (u \cdot \nabla \eta)\Vert_{L^{p}})
  	  .\label{EQM10}
  \end{align}
  Expanding $Z^\alpha (\omega \cdot \nabla u)$, we get
  \begin{align}
  	\begin{split}
  	\Vert Z^\alpha (\omega \cdot \nabla u_\hh)\Vert_{L^{p}}
  	 &\lec
  	  \Vert Z^\alpha \omega_\hh \cdot \nabla_\hh u_\hh \Vert_{L^{p}}
  	   +\Vert Z^\alpha \omega_3  \partial_z u_\hh\Vert_{L^{p}}
  	    +\Vert \omega_\hh \cdot Z^\alpha \nabla_\hh u_\hh \Vert_{L^{p}}
  	     +\Vert \omega_3  Z^\alpha \partial_z u_\hh\Vert_{L^{p}}
  	  \\&\lec
  	   \Vert \eta\Vert_{1,p}\Vert u\Vert_{2,\infty}+
  	   \Vert \eta\Vert_{L^{\infty}}\Vert u\Vert_{2,p}+
  	   \Vert u\Vert_{2,p}\Vert u\Vert_{2,\infty}
  	     ,\label{EQM11}
  	     \end{split}
  \end{align}
  where we have used \eqref{EQ62}--\eqref{EQ08}.
  Next, we rewrite the commutator term on the right-hand side of \eqref{EQM10} 
  and employ \eqref{EQ.u3} to obtain
  \begin{align}
  	\Vert u\cdot \nabla Z^\alpha \eta-Z^\alpha (u \cdot \nabla \eta)\Vert_{L^{p}}
  	 \lec
  	  \Vert Z^\alpha u_\hh \cdot \nabla_\hh \eta\Vert_{L^{p}}
  	   +\left\Vert Z^\alpha \frac{u_3}{\varphi} \cdot Z_3 \eta \right\Vert_{L^{p}}
  	    \lec
  	     \Vert u\Vert_{2,\infty}\Vert \eta\Vert_{1,p}
  	     ,\label{EQM12}
  \end{align}
  recalling that $|\alpha| = 1$ and $Z_\hh$ commutes with $\partial_z$. 
  Now,
  we consider $Z^\alpha = Z^3$ and 
  estimate $\mathcal{R}_{\alpha,i}$, $i=1,2,3$
  by repeating \eqref{EQM10}--\eqref{EQM12}.
  We note in passing that when we redo \eqref{EQM12},
  we commute $\partial_z$ and $Z_3$ using Lemma~\ref{L01} and
  obtain $C\Vert u \eta\Vert_{L^{p}}$ which is a lower order term.
  Therefore, it only remains to estimate $\mathcal{R}_{\alpha,4}$,
  which we expand as
  \begin{align}
  	\mathcal{R}_{\alpha,4}
  	 =-\nu 
  	 (2\varphi' \partial_{zz} \eta  + \varphi'' \partial_z \eta)
  	 =-2\nu
  	 \frac{\varphi'}{\varphi}\partial_z Z_3 \eta 
  	  +\nu \left(\frac{\varphi'}{\varphi}-\varphi''\right)\partial_z \eta
  	  .\llabel{EQM13}
  \end{align}
  Recalling that $Z^\alpha = Z_3$, we obtain
  \begin{align}
  	\begin{split}
  	(\mathcal{R}_{\alpha,4},Z^\alpha \eta |Z^\alpha \eta|^{p-2})
  	&=-\frac{4\nu}{p} \int_\Omega 
  	  \frac{\varphi'}{\varphi} |Z^\alpha \eta|^{\frac{p}{2}} \partial_z |Z^\alpha \eta|^{\frac{p}{2}} \,dx
  	 +\nu \int_\Omega 
  	        (\varphi'-\varphi\varphi'')|\partial_z \eta|^2 |Z^\alpha \eta|^{p-2}\,dx
  	    \\&\le 
  	    \epsilon \nu \Vert \partial_z |Z^\alpha \eta|^{\frac{p}{2}}\Vert_{L^{2}}^2
  	    +C_{\epsilon} \Vert Z^\alpha \eta\Vert_{L^{p}}^p
  	    +C\nu^\frac{p}{2} \Vert \partial_z \eta\Vert_{L^{p}}^p
   ,\label{EQM14}  
\end{split}
\end{align}
for $\epsilon>0$ sufficiently small and $C_\epsilon>0$.
  Now, we collect \eqref{EQM09}--\eqref{EQM14}, 
  absorb $\epsilon \nu \Vert \partial_z |Z^\alpha \eta|^{\frac{p}{2}}\Vert_{L^{2}}^2$,
  integrate in time and combine with \eqref{EQ09}, concluding the proof of \eqref{EQ.Nor}.
  
   We proceed to establishing \eqref{EQ.Nor2}, and
   first, consider $\alpha = (\alpha_\hh,0)$ with $|\alpha|=2$.
   Similarly to \eqref{EQM10} and \eqref{EQM11}, we obtain
   \begin{align}
   	\sum_{i=1}^{3} 
   	(\mathcal{R}_{\alpha,i}, Z^\alpha \eta |Z^\alpha \eta|^{p-2})
   	\lec
   	\Vert \eta\Vert_{2,p}^{p-1}
   	(\Vert Z^\alpha (\omega \cdot \nabla u_\hh)\Vert_{L^{p}}
   	+\Vert p\Vert_{3,p}+\Vert u\cdot \nabla Z^\alpha \eta
   	-Z^\alpha (u \cdot \nabla \eta)\Vert_{L^{p}})
   	,\llabel{EQM15}
   \end{align}
   and
   \begin{align}
   	\begin{split}
   		\Vert Z^\alpha (\omega \cdot \nabla u_\hh)\Vert_{L^{p}}
   		&\lec
   		\Vert Z^\alpha \omega \cdot \nabla u_\hh \Vert_{L^{p}}
   		+\Vert Z^\beta \omega_\hh \cdot  Z^{\alpha-\beta} \nabla_\hh u_\hh\Vert_{L^{p}}
   		+\Vert Z^\beta \omega_3  Z^{\alpha-\beta} \partial_z u_\hh \Vert_{L^{p}}
   		+\Vert \omega \cdot  Z^\alpha \nabla u_\hh\Vert_{L^{p}}
   		\\&\lec
   		(\Vert \eta\Vert_{2,p}+\Vert u\Vert_{3,p})(\Vert \eta\Vert_{L^{\infty}}+\Vert u\Vert_{2,\infty})
   		,\llabel{EQM16}
   	\end{split}
   \end{align}
   where $|\beta|=1$. We note that
   \begin{align}
   	   \Vert Z^\beta \omega_3  Z^{\alpha-\beta} \partial_z u_\hh \Vert_{L^{p}}
   	    \lec
   	     \Vert \omega_3\Vert_{L^{\infty}}\Vert \partial_z u_\hh\Vert_{2,p}
   	      +\Vert \omega_3\Vert_{2,p}\Vert \partial_z u_\hh\Vert_{L^\infty}
   	       \lec
   	        (\Vert \eta\Vert_{2,p}+\Vert u\Vert_{3,p})(\Vert \eta\Vert_{L^{\infty}}+\Vert u\Vert_{2,\infty})
   	      ,\llabel{EQM17}
   \end{align}
   by Lemma~\ref{L03} and \eqref{EQ62}--\eqref{EQ08}.
   Next, we estimate the commutator term $\mathcal{R}_{\alpha,3}$ as
   \begin{align}
   	\begin{split}
   	\Vert u\cdot \nabla Z^\alpha \eta-Z^\alpha (u \cdot \nabla \eta)\Vert_{L^{p}}
   	 &\lec
   	  \Vert Z^\alpha u_3 \partial_z \eta\Vert_{L^{p}}
   	   +\Vert Z^\beta u_3 Z^{\alpha -\beta}\partial_z \eta\Vert_{L^{p}}
   	    +\Vert Z^\alpha u_\hh \cdot \nabla_\hh \eta\Vert_{L^{p}}
   	     +\Vert Z^\beta u_\hh Z^{\alpha-\beta} \nabla_\hh \eta\Vert_{L^{p}}
   	  \\&   = I_1+I_2+I_3+I_4
   	     \comma |\beta|=1
   	     ,\llabel{EQM18}
   	     \end{split}
   \end{align}
   For $I_3$ and $I_4$, we have
   \begin{align}
   	 I_3+I_4
   	  \lec
   	   \Vert u\Vert_{2,\infty}\Vert \eta\Vert_{2,p}
   	    ,\llabel{EQM19}
   \end{align}
   while for $I_2$ we conormalize and write
   \begin{align}
   	I_2
   	 =
   	  \left\Vert Z^\beta\frac{u_3}{\varphi} Z^{\alpha-\beta} Z_3 \eta\right\Vert_{L^{p}}
   	   \lec
   	    \Vert u\Vert_{2,\infty}\Vert \eta\Vert_{2,p}
   	    .\llabel{EQM20}
   \end{align}
   Finally, for $I_3$, we employ \eqref{EQgag} to obtain
   \begin{align}
   	I_1
   	 \lec
   	  \left\Vert Z^\alpha \frac{u_3}{\varphi}\right\Vert_{L^{2p}}
   	   \Vert Z_3 \eta\Vert_{L^{2p}}
   	    \lec
   	     \Vert u\Vert_{3,2p}\Vert \eta\Vert_{L^\infty}^\frac12\Vert \eta\Vert_{2,p}^\frac12
   	     ,\llabel{EQM21}
   \end{align}
   concluding the estimate on $\mathcal{R}_{\alpha,3}$.
   
   We still need to consider $Z^\alpha$ where $\alpha = (\alpha_\hh,k)$
   and $k=1,2$. Recalling \eqref{EQ63}, we note that $\mathcal{R}_{\alpha,i}$,
   $i=1,2,3$, are treated as in the previous paragraph upon employing
   Lemma~\ref{L01} and introducing low-order terms. Therefore, we only 
   analyze $\mathcal{R}_{\alpha,4}$. We expand this term as
   \begin{equation}
   	\llabel{EQ105}
   	\nu Z^\alpha \Delta \eta
   	- \nu \Delta Z^\alpha \eta =
   	\begin{cases}
   		-\nu 
   		(2\varphi' \partial_{zz} Z_\hh \eta  + \varphi'' \partial_z Z_\hh \eta), &k=1  \\
   		-\nu 
   		(4\varphi' \partial_{zz} Z_3 \eta
   		-4 (\varphi')^2 \partial_{zz} \eta
   		+4\varphi'' \partial_z Z_3 \eta
   		-5 \varphi' \varphi'' \partial_z \eta
   		+\varphi''' Z_3 \eta), &k=2
   		,
   	\end{cases}
   \end{equation}
   and write
    \begin{align}
   	2\nu c^k_{k-1,\varphi}
   	\partial_{zz} Z^{k-1}_3 Z^{\tilde{\alpha}}_\hh \eta
   	=2\nu \frac{c^k_{k-1,\varphi}}{\varphi}\partial_z Z^\alpha \eta
   	-2\nu \frac{c^k_{k-1,\varphi}\varphi'}{\varphi} \partial_z Z^{k-1}_3 Z^{\tilde{\alpha}}_\hh \eta,
   	\llabel{EQ52}
   \end{align}
   from where we obtain that
   \begin{align}
   		(\mathcal{R}_{\alpha,4},Z^\alpha \eta |Z^\alpha \eta|^{p-2})
   		\le 
   		\epsilon \nu \Vert \partial_z |Z^\alpha \eta|^{\frac{p}{2}}\Vert_{L^{2}}^2
   		+C_{\epsilon} \Vert Z^\alpha \eta\Vert_{2,p}^p
   		+C\nu^\frac{p}{2} \Vert \partial_z \eta\Vert_{1,p}^p
   		,\llabel{EQM23}  
   	\end{align}
   for $\epsilon>0$ sufficiently small and $C_\epsilon>0$.
   Upon collecting the estimates for $\mathcal{R}_{\alpha,i}$,
   absorbing $\epsilon \nu \Vert \partial_z |Z^\alpha \eta|^{\frac{p}{2}}\Vert_{L^{2}}^2$,
   and integrating in time, we conclude \eqref{EQ.Nor2}.
 \end{proof}
 
 \startnewsection{Pressure Estimates}{sec.p}
 
 In this section, we establish bounds on the pressure term using
 the normal and conormal derivatives of $u$. 
 To achieve this, we use \eqref{NSE0} and \eqref{hnavierbdry} and 
 note that $p$ solves the elliptic
 Neumann problem
 \begin{align}
 	\begin{split}
 		-\Delta p &= \partial_i u_j \partial_j u_i,
 		\hspace{0.7cm } \inon{ in $\Omega \times (0,T)$},\\
 		\nabla p \cdot n &= -2\mu \nu \nabla_\hh \cdot u_\hh,
 		\inon{ on $\partial \Omega \times (0,T)$}
 		.\label{pre}
 	\end{split} 
 \end{align}
 Applying $Z^\alpha = Z_\hh^{\tilde{\alpha}} Z_3^k$ to this system, we obtain
 \begin{align}
 	\begin{split}
 		-\Delta Z^\alpha p  
 		&= Z^\alpha(\partial_i u_j \partial_j u_i)
 		+ Z^\alpha \Delta p -\Delta Z^\alpha p,
 		\inon{ in $\Omega \times (0,T)$},\\
 		\nabla Z^\alpha p \cdot n
 		&=-2\mu \nu \tilde{c}_{0,\varphi}^{k+1} Z^{\tilde{\alpha}}_\hh \nabla_\hh \cdot u_\hh,
 		\hspace{1.5 cm}
 		\inon{ on $\partial \Omega \times (0,T)$ }   
 		,\label{kpre}
 	\end{split}  
 \end{align}
 where we have used $Z_3 = 0$ on $\partial \Omega$ and
 \begin{align}
 	\nabla Z^\alpha p \cdot n
 	= -\partial_z Z^{\tilde{\alpha}}_\hh Z^{k+1}_3 p
 	= -Z^{\tilde{\alpha}}_\hh \sum_{j=0}^{k}
 	\tilde{c}_{j,\varphi}^{k+1} Z_3^j \partial_z p 
 	= -\tilde{c}_{0,\varphi}^{k+1} Z^{\tilde{\alpha}}_\hh \partial_z p 
 	= -2\mu \nu \tilde{c}_{0,\varphi}^{k+1} Z^{\tilde{\alpha}}_\hh \nabla_\hh \cdot u_\hh    
 	.\llabel{kpre.bdry}
 \end{align}
 In the following proposition, we estimate $Z^\alpha p$ for $|\alpha|\le 2$.
 
 \cole
 \begin{Proposition}
 	\label{P.Pre}
 	Let $\mu \in \mathbb{R}$, $\nu\in [0,\bar{\nu}]$, $p \in (2,\infty)$, and assume that $(u,p)$ is a smooth solution of \eqref{NSE0}--\eqref{hnavierbdry} on $[0,T]$.
 	Then we have the inequality
 	\begin{align}
 		\Vert D^2 p(t)\Vert_{j,p}+\Vert \nabla p(t)\Vert_{j,p}
 		\lec 
 		\Vert u(t)\Vert_{j+1,p}(\Vert u(t)\Vert_{2,\infty}+\Vert \eta(t)\Vert_{L^\infty}+1)
 		+\nu \sum_{1\le |\tilde{\theta}|\le j+1}\Vert u(t)\Vert_{3,p}^{\frac{p-1}{p}}
 		\Vert \nabla Z^{\tilde{\theta}}_\hh u(t)\Vert_{L^p}^{\frac{1}{p}}
 		,
 		\label{EQ.Pre}
 	\end{align}
 	for $j=0,1,2$ and $t \in [0,T]$. 
 \end{Proposition}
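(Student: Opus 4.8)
The plan is to estimate $Z^\alpha p$ for every conormal multi-index $\alpha=(\tilde\alpha,k)$ with $|\alpha|\le j\le2$ by regarding it as the solution of the elliptic Neumann problem \eqref{kpre} and applying the $L^p$ regularity theory for the Neumann Laplacian on $\mathbb{R}^3_+$ (the divergence form of the source $\partial_i u_j\partial_j u_i=\div(u\cdot\nabla u)$ being what controls the first-order derivatives). Since $Z_\hh$ commutes with $\Delta$ while the top-order term in the commutator of $Z_3^k$ with $\Delta$ drops out ($c^k_{k,\varphi}=1$ is constant), summing the elliptic estimates over $|\alpha|\le j$ reconstructs $\Vert D^2p\Vert_{j,p}+\Vert\nabla p\Vert_{j,p}$ up to terms of conormal order $\le j-1$. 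Thus the proof reduces to three estimates for each $\alpha$: (a) $\Vert Z^\alpha(\partial_i u_j\partial_j u_i)\Vert_{L^p}$; (b) $\Vert Z^\alpha\Delta p-\Delta Z^\alpha p\Vert_{L^p}$; and (c) the contribution of the Neumann datum $2\mu\nu\,\tilde c^{k+1}_{0,\varphi}Z^{\tilde\alpha}_\hh\nabla_\hh\cdot u_\hh$, which is absent when $\mu=0$ or $\nu=0$. Because $j\le2$, the conormal order of $u$ appearing in (a)--(c) never exceeds $3$, which is why $\Vert u\Vert_{3,p}$, not a higher norm, enters the $\nu$-term.

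The main obstacle is (a): $\partial_i u_j\partial_j u_i$ is a product of two first-order derivatives of $u$, and a bare $\partial_z u_\hh$ is not controlled by conormal norms of $u$ near $\partial\Omega$. The resolution rests on the structure of this quadratic form. The only summand with two normal derivatives is $(\partial_z u_3)^2=(\nabla_\hh\cdot u_\hh)^2$, which is purely conormal by incompressibility; and each summand containing a factor $\partial_z u_m$ with $m\in\{1,2\}$ is multiplied by the factor $\partial_m u_3$, which vanishes on $\partial\Omega$. Writing $\partial_m u_3=\varphi\,\partial_m(u_3/\varphi)$ and, after a Leibniz expansion, moving the weight $\varphi$ onto the factor carrying $\partial_z$ turns $\varphi\partial_z$ into $Z_3$ via Lemma~\ref{L01}(i); combined with \eqref{EQ.u3}, Hardy's inequality, \eqref{EQ62}, and Lemma~\ref{L03} — distributing, in each Leibniz term, at most two conormal derivatives onto the $L^\infty$ factor and at most $j+1$ onto the $L^p$ factor (this is exactly where $j\le2$ is used, and the bare $\partial_z u_\hh$ that survives in the term where no derivative hits $\partial_m u_3$ is bounded in $L^\infty$ by $\Vert\eta\Vert_{L^\infty}+\Vert u\Vert_{1,\infty}$ through \eqref{EQ62}) — this gives
\[
\Vert Z^\alpha(\partial_i u_j\partial_j u_i)\Vert_{L^p}\lec\Vert u\Vert_{j+1,p}\bigl(\Vert u\Vert_{2,\infty}+\Vert\eta\Vert_{L^\infty}+1\bigr).
\]

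For (b), Lemma~\ref{L01}(iii) writes $Z^\alpha\Delta p-\Delta Z^\alpha p=Z^{\tilde\alpha}_\hh\bigl(Z_3^k\partial_{zz}p-\partial_{zz}Z_3^kp\bigr)$ as a linear combination, with smooth bounded coefficients, of terms $\partial_{zz}Z^\gamma p$ and $\partial_z Z^{\gamma'}p$ with $|\gamma|,|\gamma'|\le|\alpha|-1$ — the top-order contributions cancel since $c^k_{k,\varphi}=1$ — whence $\Vert Z^\alpha\Delta p-\Delta Z^\alpha p\Vert_{L^p}\lec\Vert D^2p\Vert_{j-1,p}+\Vert\nabla p\Vert_{j-1,p}$, which is of strictly lower conormal order. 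For (c), the Neumann datum is a constant multiple of $Z^{\tilde\alpha}_\hh\nabla_\hh\cdot u_\hh|_{\partial\Omega}$, where $Z^{\tilde\alpha}_\hh\nabla_\hh\cdot u_\hh$ is a horizontal conormal derivative of $u$ of order $|\tilde\alpha|+1\le j+1$; extending it to $\Omega$ and applying the trace estimates of Section~\ref{sec.tr} (see Lemma~\ref{Lt2}), together with $\Vert Z^{\tilde\alpha}_\hh\nabla_\hh\cdot u_\hh\Vert_{L^p}\lec\Vert u\Vert_{j+1,p}\le\Vert u\Vert_{3,p}$, bounds $2|\mu|\nu$ times its boundary seminorm by $\nu\sum_{1\le|\tilde\theta|\le j+1}\Vert u\Vert_{3,p}^{\frac{p-1}{p}}\Vert\nabla Z^{\tilde\theta}_\hh u\Vert_{L^p}^{\frac1p}$, which is exactly the $\nu$-term in \eqref{EQ.Pre}.

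Finally, we close by induction on $j\in\{0,1,2\}$. For $j=0$ the commutator in (b) vanishes and (a) (with $\alpha=0$) together with (c) already give \eqref{EQ.Pre}. For the inductive step, collecting (a), (b) and (c) over all $|\alpha|\le j$ and using the first paragraph's reconstruction yields that $\Vert D^2p\Vert_{j,p}+\Vert\nabla p\Vert_{j,p}$ is bounded by the right-hand side of~\eqref{EQ.Pre} plus $\Vert D^2p\Vert_{j-1,p}+\Vert\nabla p\Vert_{j-1,p}$; since the right-hand side of~\eqref{EQ.Pre} is non-decreasing in $j$, the inductive hypothesis concludes the proof.
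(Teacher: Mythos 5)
Your proposal is correct in substance and shares the paper's skeleton: $W^{2,p}$ elliptic estimates for the Neumann problems \eqref{kpre}, the same treatment of the quadratic source (the identity $(\partial_z u_3)^2=(\nabla_\hh\cdot u_\hh)^2$, the pairing of each $\partial_z u_m$ with $\partial_m u_3$, conormalization via \eqref{EQ.u3} and Lemma~\ref{L01}, Lemma~\ref{L03}, and \eqref{EQ62}; compare \eqref{EQ89}--\eqref{EQ91b}), and a trace-type bound for the Neumann datum yielding exactly the $\nu$-term. Where you genuinely diverge is the treatment of the Laplacian commutator: you induct on the total conormal order $j$, observe via Lemma~\ref{L01}(iii)--(iv) that $\Vert Z^\alpha\Delta p-\Delta Z^\alpha p\Vert_{L^p}\lec\Vert D^2p\Vert_{j-1,p}+\Vert\nabla p\Vert_{j-1,p}$, and absorb this by the case $j-1$ of the statement itself, using that the right-hand side of \eqref{EQ.Pre} is nondecreasing in $j$. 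The paper instead fixes the top order, inducts on the number of $Z_3$ factors with Lemma~\ref{L.Pre} as the base case, and uses the pressure equation through \eqref{EQ97}, $Z\partial_{zz}p=-Z\Delta_\hh p-Z(\partial_iu_j\partial_ju_i)$, to trade the normal second derivatives in the commutator for horizontal ones covered by the induction hypothesis \eqref{EQ.pre.k-1}. Both routes close; yours is arguably more economical, since the commutator really does lose a full conormal order, so no substitution via the equation is needed. Two small inaccuracies, neither fatal: (i) the boundary contribution is handled by the stationary trace theorem at fixed time (as in \eqref{EQ87}), not by Lemma~\ref{Lt2}, which is a space-time estimate for the temporal fractional seminorm on $\partial\Omega_t$ used only in the maximal-regularity part; (ii) the bare $\partial_z u_\hh$ survives in the Leibniz term where \emph{all} conormal derivatives fall on $\partial_m u_3$ (not where none do --- that opposite extreme is the one you conormalize), though the bound you apply to it, $\Vert\partial_z u\Vert_{L^\infty}\lec\Vert\eta\Vert_{L^\infty}+\Vert u\Vert_{1,\infty}$ from \eqref{EQ62}, is exactly the right one and matches the last case of \eqref{EQ91b}.
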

 \colb
 
 First, we estimate $ Z^{\tilde{\alpha}}_\hh p$ for $|\tilde{\alpha}|\le 2$, and then
 we inductively estimate $Z_\hh^{\tilde{\alpha}} Z_3^k$ 
 for $0\le k \le 2$ and $|\tilde{\alpha}|+k \le 2$.
 
 \cole
 \begin{Lemma}
 	\label{L.Pre}
 	Under the assumption of Proposition~\ref{P.Pre}, we have the inequality
 	\begin{align}
 		\Vert D^2 Z^{\tilde{\alpha}}_\hh p\Vert_{L^p}+
 		\Vert \nabla Z^{\tilde{\alpha}}_\hh p\Vert_{L^p}
 		\lec 
 		\Vert u(t)\Vert_{3,p}(\Vert u(t)\Vert_{2,\infty}+\Vert \eta(t)\Vert_{L^\infty})
 		+\nu \sum_{1\le |\tilde{\theta}|\le 3}\Vert u(t)\Vert_{3,p}^{\frac{p-1}{p}}
 		\Vert \nabla Z^{\tilde{\theta}}_\hh u(t)\Vert_{L^p}^{\frac{1}{p}},
 		\label{EQ.hPre}
 	\end{align}
 	where $0 \le |\tilde{\alpha}|\le 2$.
 \end{Lemma}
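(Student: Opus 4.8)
The plan is to differentiate the pressure equation \eqref{pre} in the horizontal directions and then invoke $L^p$ elliptic regularity for the Neumann Laplacian on the half-space. Applying $Z^{\tilde\alpha}_\hh$ with $|\tilde\alpha|\le 2$ to \eqref{pre}, and using that $Z_1=\partial_1$ and $Z_2=\partial_2$ have constant coefficients and hence commute with $\Delta$, with every $\partial_j$, and with the boundary trace, no commutator terms appear and one obtains the clean Neumann problem
$$
-\Delta Z^{\tilde\alpha}_\hh p = Z^{\tilde\alpha}_\hh(\partial_i u_j\partial_j u_i)\inon{in $\Omega$}\comma
\partial_z Z^{\tilde\alpha}_\hh p = 2\mu\nu\, Z^{\tilde\alpha}_\hh(\nabla_\hh\cdot u_\hh)\inon{on $\partial\Omega$}.
$$
(This is precisely why the horizontal derivatives are treated first, separately from the inductive treatment of $Z_3$ in Proposition~\ref{P.Pre}, for which $Z_3$ does not commute with $\Delta$.) Since $p$ is only determined up to an additive constant on the half-space, all bounds are stated for $\nabla(Z^{\tilde\alpha}_\hh p)$ and $D^2(Z^{\tilde\alpha}_\hh p)$; $L^p$ maximal regularity for the Neumann Laplacian (as recorded in Section~\ref{sec.max}, or standard Agmon--Douglis--Nirenberg theory) then gives
$$
\Vert D^2 Z^{\tilde\alpha}_\hh p\Vert_{L^p}+\Vert\nabla Z^{\tilde\alpha}_\hh p\Vert_{L^p}
\lec \Vert Z^{\tilde\alpha}_\hh(\partial_i u_j\partial_j u_i)\Vert_{L^p}
+ \bigl\Vert 2\mu\nu\, Z^{\tilde\alpha}_\hh(\nabla_\hh\cdot u_\hh)\big|_{z=0}\bigr\Vert_{\dot W^{1-\frac1p,p}(\partial\Omega)}.
$$

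For the interior term I would first use $\div u=0$ in the form $\partial_z u_3=-\nabla_\hh\cdot u_\hh$ to rewrite $\partial_i u_j\partial_j u_i$ as a sum of products in which $\partial_z^2$ never appears and a normal derivative occurs at most once per summand — the only surviving one being a single factor $\partial_z u_\hh$ times a horizontal derivative of $u_3$. Replacing $\partial_z u_\hh$ by means of \eqref{eta} and \eqref{EQ62} so that it is controlled by $\eta$ and horizontal conormal derivatives of $u$, distributing $Z^{\tilde\alpha}_\hh$ by Leibniz, and bounding each product by Hölder — in the end cases placing the highest-order factor in $L^p$ and the other in $L^\infty$, and in the balanced cases invoking the interpolation inequality of Lemma~\ref{L03}, together with \eqref{EQ62}--\eqref{EQ08} and the Hardy inequality \eqref{EQ.u3} — one arrives at $\Vert Z^{\tilde\alpha}_\hh(\partial_i u_j\partial_j u_i)\Vert_{L^p}\lec \Vert u\Vert_{3,p}(\Vert u\Vert_{2,\infty}+\Vert\eta\Vert_{L^\infty})$.

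For the boundary term, the point is that the Neumann datum $2\mu\nu Z^{\tilde\alpha}_\hh(\nabla_\hh\cdot u_\hh)|_{z=0}$ is the boundary restriction of the explicit interior function $2\mu\nu Z^{\tilde\alpha}_\hh(\nabla_\hh\cdot u_\hh)$, built from at most three horizontal conormal derivatives of $u$. Splitting $Z^{\tilde\alpha}_\hh p$ into a part with homogeneous Neumann data and a harmonic corrector, and using that on the half-space the normal derivative of the corrector is, up to a constant, the Poisson extension of the datum, reduces $\Vert\nabla(\cdot)\Vert_{L^p(\Omega)}$ and $\Vert D^2(\cdot)\Vert_{L^p(\Omega)}$ of the corrector to boundary norms of $Z^{\tilde\theta}_\hh u$, $1\le|\tilde\theta|\le 3$; applying a trace inequality of the type developed in Section~\ref{sec.tr}, together with $\nabla_\hh\cdot u_\hh=-\partial_z u_3$ and the Hardy inequality \eqref{EQ.u3}, bounds these by $\nu\sum_{1\le|\tilde\theta|\le 3}\Vert u\Vert_{3,p}^{\frac{p-1}{p}}\Vert\nabla Z^{\tilde\theta}_\hh u\Vert_{L^p}^{\frac1p}$. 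Combining with the interior estimate yields \eqref{EQ.hPre}.

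The main obstacle is the boundary term. One delicacy is the functional setting for the Neumann problem on the unbounded half-space: solvability only modulo constants, hence homogeneous norms and estimates on $\nabla p$, $D^2 p$ rather than $p$ itself. The more serious point is extracting exactly the weights of \eqref{EQ.hPre}: the quantity $\nabla Z^{\tilde\theta}_\hh u$ with $|\tilde\theta|=3$ is genuinely of higher conormal order than anything controlled by $\Vert u\Vert_{3,p}$ and must be reserved for the maximal-regularity argument of Section~\ref{sec.max}, so the trace estimate must produce it multiplied by $\nu$ and in the interpolated form $\Vert u\Vert_{3,p}^{\frac{p-1}{p}}\Vert\nabla Z^{\tilde\theta}_\hh u\Vert_{L^p}^{\frac1p}$ rather than with a bare power; once this is inserted into \eqref{EQ.Con} against the prefactor $\Vert u\Vert_{3,p}^{p-1}$, Young's inequality then absorbs the pressure contribution into the Gronwall term and the $\nu^{p/2}$-weighted term. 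A lesser point is bookkeeping in the interior estimate: one must verify that the incompressibility rewriting really eliminates every uncontrolled normal derivative, the lone survivor $\partial_z u_\hh$ being absorbed by $\eta$ via \eqref{EQ62}.
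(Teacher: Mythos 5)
Your overall route is the paper's: apply $Z^{\tilde{\alpha}}_\hh$, which commutes with $\Delta$, $\nabla$, and the boundary trace so no commutators appear, invoke the $W^{2,p}$ elliptic estimate for the resulting Neumann problem, and control the boundary datum $2\mu\nu Z^{\tilde{\alpha}}_\hh\nabla_\hh\cdot u_\hh$ by a multiplicative trace-type bound; this is exactly how the factor $\nu\Vert u\Vert_{3,p}^{\frac{p-1}{p}}\Vert \nabla Z^{\tilde{\theta}}_\hh u\Vert_{L^p}^{\frac{1}{p}}$ arises in \eqref{EQ87}, and your harmonic-corrector/Poisson-extension construction is just a hands-on version of that step. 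The treatment of $(\partial_z u_3)^2$ via incompressibility also matches the paper.

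The gap is in the interior quadratic term. You propose to replace $\partial_z u_\hh$ through \eqref{eta} and \eqref{EQ62} \emph{before} distributing $Z^{\tilde{\alpha}}_\hh$. After Leibniz this necessarily produces factors $Z^{\tilde{\gamma}}_\hh\eta$ with $|\tilde{\gamma}|$ up to $2$ measured in $L^p$, e.g. $\Vert Z_\hh u_3\Vert_{L^\infty}\Vert Z^{\tilde{\gamma}}_\hh\eta\Vert_{L^p}$, i.e. a $\Vert\eta\Vert_{2,p}$ contribution; Lemma~\ref{L03} does not remove it, since it still yields $\Vert Z_\hh u_3\Vert_{L^\infty}\Vert\eta\Vert_{2,p}+\Vert Z_\hh u_3\Vert_{2,p}\Vert\eta\Vert_{L^\infty}$. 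The right-hand side of \eqref{EQ.hPre} contains no conormal norms of $\eta$, and this is not cosmetic: in the $\mu\ge 0$ part of the argument only $\Vert\eta\Vert_{1,6}$ is propagated, so a pressure estimate involving $\Vert\eta\Vert_{2,p}$ could not be absorbed later. The correct move (the paper's) is to conormalize via Hardy whenever derivatives land on the normal-derivative factor: since $Z_\hh$ commutes with $\varphi$ and $\partial_z$, one writes $Z_\hh u_3\, Z^{\tilde{\gamma}}_\hh\partial_z u_\hh = Z_\hh(u_3/\varphi)\, Z^{\tilde{\gamma}}_\hh Z_3 u_\hh$ and uses \eqref{EQ.u3} to bound $u_3/\varphi$ by conormal derivatives of $u$, giving $\Vert u\Vert_{3,p}\Vert u\Vert_{2,\infty}$ (the cases $|\tilde{\beta}|=0,1$ of \eqref{EQ91b}); $\eta$ enters only through the underived factor, via $\Vert\partial_z u\Vert_{L^\infty}\lec\Vert\eta\Vert_{L^\infty}+\Vert u\Vert_{1,\infty}$ from \eqref{EQ62}, in the case where all horizontal derivatives fall on $Z_\hh u_3$. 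With that replacement your argument delivers \eqref{EQ.hPre}.
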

 \colb
 
 \begin{proof}[Proof of Lemma~\ref{L.Pre}]
 	We consider \eqref{kpre} with $Z^\alpha = Z^{\tilde{\alpha}}$ and $0\le |\tilde{\alpha}|\le 2$. 
 	Employing the $W^{2,p}$ elliptic estimate for the Neumann problem and the trace theorem, it follows that
 	\begin{align}
 		\Vert D^2 Z^{\tilde{\alpha}}_\hh p\Vert_{L^2}
 		+
 		\Vert \nabla Z^{\tilde{\alpha}}_\hh p\Vert_{L^2}
 		\lec
 		\Vert Z^{\tilde{\alpha}}_\hh (\partial_i u_j \partial_j u_i)\Vert_{L^p}
 		+
 		2|\mu| \nu \Vert u\Vert_{3,p}^{\frac{p-1}{p}}
 		\Vert \nabla Z^{\tilde{\alpha}}_\hh \nabla_\hh u\Vert_{L^p}^{\frac{1}{p}}
 		.\label{EQ87}
 	\end{align}
 	To bound the quadratic term on \eqref{EQ87}, 
 	we only consider  
 	$Z^{\tilde{\alpha}}_\hh (Z_\hh u_\hh Z_\hh u_\hh )$ 
 	and 
 	$Z^{\tilde{\alpha}}_\hh (Z_\hh u_3 \partial_z u_\hh)$  
 	since 
 	\begin{align}
 		(\partial_z u_3)^2
 		=(\nabla_\hh \cdot u_\hh)^2
 		.\llabel{EQM24}
 	\end{align}
 	Now, utilizing \eqref{EQ.int}, we get
 	\begin{align}
 		\Vert Z^{\tilde{\alpha}}_\hh (Z_\hh u_\hh Z_\hh u_\hh )\Vert_{L^p}
 		\lec
 		\Vert u\Vert_{3,p}\Vert u\Vert_{1,\infty},
 		\label{EQ89}
 	\end{align}
 	while for the term involving $\partial_z u$,
 	we conormalize when necessary and obtain
 	\begin{equation}
 		\label{EQ91b}
 		\Vert Z_\hh^{\tilde{\beta}} Z_\hh u_3 
 		\partial_z Z_\hh^{\tilde{\alpha} - {\tilde{\beta}}} u_\hh\Vert_{L^p}
 		\lec
 		\begin{cases}
 			\bigl\Vert Z_\hh \frac{u_3}{\varphi}\bigr\Vert_{L^\infty}\Vert Z^{\tilde{\alpha}-\tilde{\beta}}_\hh Z_3 u\Vert_{L^p}
 			\lec \Vert u\Vert_{2,\infty}\Vert u\Vert_{3,p}, & |{\tilde{\beta}}|=0
 			\\
 			\bigl\Vert Z_\hh^{\tilde{\beta}} Z_\hh \frac{u_3}{\varphi}\bigr\Vert_{L^p}\Vert Z_\hh Z_3 u\Vert_{L^\infty}
 			\lec\Vert u\Vert_{3,p}\Vert u\Vert_{2,\infty}, & |{\tilde{\beta}}|=1
 			\\
 			\Vert u\Vert_{3,p}\Vert \partial_z u\Vert_{L^\infty}, & |{\tilde{\beta}}|=2,
 		\end{cases}
 	\end{equation}
 	where we have used \eqref{EQ.u3}. 
 	Recalling \eqref{EQ62}, we conclude \eqref{EQ.hPre}.
 \end{proof}
 
 Now, we prove Proposition~\ref{P.Pre}.
 
 \begin{proof}[Proof of Proposition~\ref{P.Pre}]
 	
 	We only give details for the terms
 	$\Vert D^2 Z^\alpha p\Vert_{L^p}$ and
 	$\Vert \nabla Z^\alpha p\Vert_{L^p}$ with $|\alpha|=2$,
 	utilizing induction on $0\le k \le 3$ such that 
 	$k + |\tilde{\alpha}| = 2$. The base step, i.e., $k=0$ 
 	is a consequence of Lemma~\ref{L.Pre}.
 	Now, the induction assumption is 
 	\begin{align}
 		\begin{split}
 			\Vert D^2 Z^{\tilde{\beta}}_\hh Z^{k}_3 p\Vert_{L^p}+
 			\Vert \nabla Z^{\tilde{\beta}}_\hh Z^{k}_3 p\Vert_{L^p}
 			&\lec \Vert u\Vert_{3}(\Vert u\Vert_{2,\infty}+\Vert \eta\Vert_{L^\infty}+1) 
 			\\&\indeq+
 			\nu \sum_{1\le |\tilde{\theta}|\le |\tilde{\beta}|+1}\Vert u(t)\Vert_{3,p}^{\frac{p-1}{p}}
 			\Vert \nabla Z^{\tilde{\theta}}_\hh u(t)\Vert_{L^p}^{\frac{1}{p}} 
 			\comma |\tilde{\beta}| + k = 2,
 			\label{EQ.pre.k-1} 
 		\end{split}
 	\end{align}
 	and we need to establish
 	\begin{align}
 		\begin{split}
 			\Vert D^2 Z^{\tilde{\alpha}}_\hh Z^{k+1}_3 p\Vert_{L^p}+
 			\Vert \nabla Z^{\tilde{\alpha}}_\hh Z^{k+1}_3 p\Vert_{L^p}
 			&\lec \Vert u\Vert_{3}(\Vert u\Vert_{2,\infty}+\Vert \eta\Vert_{L^\infty}+1) 
 			\\
 			&\indeq+
 			\nu \sum_{1\le |\tilde{\theta}|\le |\tilde{\alpha}|+1}\Vert u(t)\Vert_{3,p}^{\frac{p-1}{p}}
 			\Vert \nabla Z^{\tilde{\theta}}_\hh u(t)\Vert_{L^p}^{\frac{1}{p}} 
 			\comma |\tilde{\alpha}| + k + 1 = 2.
 			\label{EQ.pre.k}
 		\end{split}
 	\end{align}
 	
 	Let $Z^\alpha = Z^{\tilde{\alpha}}_\hh Z^{k+1}_3$ and employ
 	the elliptic estimates for \eqref{kpre} obtaining
 	\begin{align}
 		\Vert D^2 Z^\alpha p\Vert_{L^p}+
 		\Vert \nabla Z^\alpha p\Vert_{L^p}
 		\lec
 		\Vert Z^\alpha(\partial_i u_j \partial_j u_i)\Vert_{L^p}
 		+
 		\Vert Z^\alpha \Delta p -\Delta Z^\alpha p\Vert_{L^p}
 		+
 		\nu \Vert u\Vert_{3,p}^{\frac{p-1}{p}}
 		\Vert \nabla Z^{\tilde{\alpha}}_\hh \nabla_\hh u\Vert_{L^p}^{\frac{1}{p}}
 		,\label{EQ93}
 	\end{align}
 	using that $\mu$ is a constant. 
 	For the quadratic term in \eqref{EQ93} we proceed as in \eqref{EQ89} and \eqref{EQ91b}
 	to obtain
 	\begin{align}
 		\Vert Z^\alpha(\partial_i u_j \partial_j u_i)\Vert_{L^p}
 		\lec 
 		\Vert u\Vert_{3,p}(\Vert u\Vert_{2,\infty}+\Vert \eta\Vert_{L^\infty})
 		,
 		\label{EQ94}
 	\end{align}
 	upon introducing lower-order terms.
 	Now, we rewrite the commutator term for the pressure as 
 	\begin{equation}
 		\label{EQ95}
 		Z^\alpha \Delta p
 		- \Delta Z^\alpha p =
 		\begin{cases}
 			- 
 			(2\varphi' Z_\hh\partial_{zz} p  + \varphi''Z_\hh  \partial_z p), &k=1  \\
 			- 
 			(2\varphi' Z_3 \partial_{zz} p
 			+2 (\varphi')^2 \partial_{zz} p
 			+2\varphi'' Z_3 \partial_z p
 			+3 \varphi' \varphi'' \partial_z p
 			+\varphi''' Z_3 p), &k=2
 			.
 		\end{cases}
 	\end{equation}
 	Using \eqref{pre}, we arrive at
 	\begin{align}
 		Z\partial_{zz} p
 		=
 		-Z \Delta_\hh p
 		-Z (\partial_i u_j \partial_j u_i)
 		\comma Z=Z_\hh, Z_3.
 		\label{EQ97}
 	\end{align}
 	We may estimate the quadratic term in \eqref{EQ97} as in \eqref{EQ89} and~\eqref{EQ91b},
 	while for the pressure term we have
 	\begin{align}
 		\Vert Z\Delta_\hh p\Vert_{L^p}
 		\lec
 		\Vert D^2 Z p\Vert_{L^p}
 		\lec 
 		\Vert u\Vert_{3}(\Vert u\Vert_{2,\infty}+\Vert \eta\Vert_{L^\infty}+1)
 		+
 		\nu \sum_{1\le |\tilde{\theta}|\le 3-k}\Vert u(t)\Vert_{3,p}^{\frac{p-1}{p}}
 		\Vert \nabla Z^{\tilde{\theta}}_\hh u(t)\Vert_{L^p}^{\frac{1}{p}} 
 		,\llabel{EQ98}
 	\end{align}
 	employing the induction assumption~\eqref{EQ.pre.k-1}.
 	We may estimate the remaining lower orders terms in \eqref{EQ95}
 	and write
 	\begin{align}
 		\Vert Z^\alpha \Delta p -\Delta Z^\alpha p\Vert_{L^2}
 		\lec \Vert u\Vert_{4}(\Vert u\Vert_{2,\infty}+\Vert \eta\Vert_{L^\infty}+1)
 		+
 		\nu \sum_{1\le |\tilde{\theta}|\le 3}\Vert u(t)\Vert_{3,p}^{\frac{p-1}{p}}
 		\Vert \nabla Z^{\tilde{\theta}}_\hh u(t)\Vert_{L^p}^{\frac{1}{p}} 
 		.\label{EQ99}
 	\end{align}
 	Finally, combining \eqref{EQ93}, \eqref{EQ94}, and \eqref{EQ99},
 	we conclude \eqref{EQ.pre.k} and the proof of Proposition~\ref{P.Pre}.
 \end{proof}
 
 \startnewsection{$L^\infty$ estimates}{sec.inf}

 In this section, we establish a control over $\Vert \eta\Vert_{L^\infty}$,
 $\Vert \omega\Vert_{L^{\infty}}$, and~$\Vert u\Vert_{2,\infty}$.
 
 \cole
 \begin{Proposition}
  \label{P.Inf}
  Let $\mu \in \mathbb{R}$, $\nu\in [0,\bar{\nu}]$, and assume that $(u,p)$ is a smooth solution of \eqref{NSE0}--\eqref{hnavierbdry} on $[0,T]$ with a smooth initial datum $u_0$.
  Then we have the following inequalities.
  \begin{itemize}
   \item[i.] If $\mu \in \mathbb{R}$, we have
   \begin{align}
     \Vert \eta(t)\Vert_{L^\infty}
     \lec 
     \Vert \eta_0\Vert_{L^\infty}
     + \int_0^t \biggl((\Vert u\Vert_{1,\infty}+\Vert \eta\Vert_{L^\infty})^2
     +\Vert Z_\hh p\Vert_{L^\infty}\biggr)\,ds
     \label{EQ.Inf1}
    \end{align}
   for $t \in [0,T]$.
   \item[ii.] If $\mu \ge 0$, we have
   \begin{align}
    \begin{split}
     \Vert u(t)&\Vert_{2,\infty}^2+\Vert \eta(t)\Vert_{L^\infty}^2
     \\&\lec 
     \Vert u_0\Vert_{2,\infty}^2+\Vert \eta_0\Vert_{L^\infty}^2
     + \int_0^t \biggl((\Vert u\Vert_{2,\infty}+\Vert \eta\Vert_{L^\infty}+1)^3
     +\Vert u\Vert_{2,\infty}\Vert \nabla p\Vert_{2,\infty}\biggr)\,ds
     \\&\indeq
     +\nu\int_0^t \Vert \partial_z u\Vert_{1,\infty}^2\,ds
     ,
     \label{EQ.Inf2}
    \end{split}
   \end{align}
   for $t\in [0,T]$.
   \item[iii.] If $\nu = 0$, we have
   \begin{align}
   	\begin{split}
   		\Vert u(t)&\Vert_{2,\infty}^2+\Vert \omega(t)\Vert_{L^\infty}^2
   		\\&\lec 
   		\Vert u_0\Vert_{2,\infty}^2+\Vert \omega_0\Vert_{L^\infty}^2
   		+ \int_0^t \biggl((\Vert u\Vert_{2,\infty}+\Vert \eta\Vert_{L^\infty}+1)^3
   		+\Vert u\Vert_{2,\infty}\Vert \nabla p\Vert_{2,\infty}\biggr)\,ds
   		,
   		\label{EQ.Inf3}
   	\end{split}
   \end{align}
   for $t \in [0,T]$.
  \end{itemize} 
 \end{Proposition}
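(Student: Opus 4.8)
The plan is to prove all three parts by maximum‑principle (comparison) arguments rather than by testing against $Z^\alpha u$, since the goal is $L^\infty$ control. In every case the quantity of interest solves a transport or a convection--diffusion equation, so its $L^\infty$ norm at time $t$ is bounded by that of its datum plus the time integral of the $L^\infty$ norm of the forcing and of the commutator terms; the sign of $\mu$ enters only through whether the boundary condition satisfied by the quantity is of maximum‑principle type. To make ``$\tfrac{d}{dt}\Vert\cdot\Vert_{L^\infty}$'' rigorous I would evaluate at a point where the spatial maximum is (nearly) attained, or derive the $L^p$ analogues and let $p\to\infty$.

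For part~(i), $\eta$ defined by~\eqref{eta} solves a convection--diffusion equation with the homogeneous Dirichlet condition $\eta|_{\partial\Omega}=0$, so the comparison principle (propagation along the characteristics of $u$ when $\nu=0$) gives $\Vert\eta(t)\Vert_{L^\infty}\lec\Vert\eta_0\Vert_{L^\infty}+\int_0^t(\Vert\omega\cdot\nabla u_\hh\Vert_{L^\infty}+2|\mu|\Vert\nabla_\hh^\perp p\Vert_{L^\infty})\,ds$. Using~\eqref{EQ62}--\eqref{EQ08} one has $\Vert\omega\cdot\nabla u_\hh\Vert_{L^\infty}\lec\Vert\omega\Vert_{L^\infty}\Vert\nabla u_\hh\Vert_{L^\infty}\lec(\Vert u\Vert_{1,\infty}+\Vert\eta\Vert_{L^\infty})^2$, while $\nabla_\hh^\perp p$ is a horizontal (hence conormal) derivative so $\Vert\nabla_\hh^\perp p\Vert_{L^\infty}\lec\Vert Z_\hh p\Vert_{L^\infty}$, which yields~\eqref{EQ.Inf1}. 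For part~(iii) ($\nu=0$), taking the curl of the Euler momentum equation removes the pressure and $\omega=\curl u$ solves $\omega_t+u\cdot\nabla\omega=\omega\cdot\nabla u$; propagation along characteristics together with $\Vert\nabla u\Vert_{L^\infty}\lec\Vert\eta\Vert_{L^\infty}+\Vert u\Vert_{1,\infty}$ gives $\tfrac{d}{dt}\Vert\omega\Vert_{L^\infty}^2\lec(\Vert u\Vert_{2,\infty}+\Vert\eta\Vert_{L^\infty}+1)^3$. For $\Vert u\Vert_{2,\infty}$ I would apply $Z^\alpha$, $|\alpha|\le2$, to the Euler equation (no Laplacian term), estimate the quadratic commutator $u\cdot\nabla Z^\alpha u-Z^\alpha(u\cdot\nabla u)$ exactly as in the proof of Proposition~\ref{P.Con} (expanding, conormalizing the $\partial_z$ contributions by inserting $\varphi\,\varphi^{-1}$, and invoking~\eqref{EQ.u3} and Lemma~\ref{L01}) by $\lec\Vert u\Vert_{2,\infty}(\Vert u\Vert_{2,\infty}+\Vert\eta\Vert_{L^\infty}+1)$, and bound the pressure term by $\Vert Z^\alpha\nabla p\Vert_{L^\infty}\le\Vert\nabla p\Vert_{2,\infty}$; tracking $\Vert Z^\alpha u\Vert_{L^\infty}^2$, summing over $\alpha$, and combining with the $\omega$ bound gives~\eqref{EQ.Inf3}.

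Part~(ii) is the crux, and here $\mu\ge0$ is used essentially. For $|\alpha|\le2$ I would use the convection--diffusion equation~\eqref{EQ46} for $Z^\alpha u$. The boundary condition splits into two kinds: the third component, and every $Z^\alpha$ containing a factor $Z_3$, vanish on $\partial\Omega$, while $Z_\hh^{\tilde\alpha}u_\hh$ inherits the Robin condition $\partial_z Z_\hh^{\tilde\alpha}u_\hh=2\mu Z_\hh^{\tilde\alpha}u_\hh$; since $\mu\ge0$ this Robin condition is compatible with the maximum principle (a positive boundary maximum would force, via Hopf, an inward decrease, contradicting $\partial_z=2\mu(\cdot)\ge0$), so in all cases $\tfrac{d}{dt}\Vert Z^\alpha u\Vert_{L^\infty}$ is controlled by the $L^\infty$ norm of the right‑hand side of~\eqref{EQ46}. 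The transport commutator and the pressure term are treated as in part~(iii). The essentially new term is the Laplacian commutator $\nu(Z^\alpha\Delta u-\Delta Z^\alpha u)$, which is zero for purely horizontal $\alpha$ and otherwise is expanded by Lemma~\ref{L01}; writing $\partial_z=\varphi^{-1}Z_3$ one isolates (a) terms of the form $-\nu(\varphi'/\varphi)^2 Z^\alpha u$ and its analogues, which carry the dissipative sign at a maximum of $|Z^\alpha u|$ and may be discarded; (b) terms $\nu(\varphi^{-1}c^k_{k-1,\varphi})\,\partial_z Z^\alpha u$, which vanish at an interior maximum because there $Z^\alpha u\cdot\partial_z Z^\alpha u=0$, and vanish on $\partial\Omega$ because $Z^\alpha u=0$ there whenever $k\ge1$; and (c) genuinely lower‑order terms $\nu\,c_\varphi\,\partial_z Z^\beta u$ with $|\beta|\le1$ and bounded coefficients, bounded by $C\nu\Vert\partial_z u\Vert_{1,\infty}$. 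Splitting $\nu=\sqrt\nu\cdot\sqrt\nu$ and using Young's inequality, the contribution of (c) to $\tfrac{d}{dt}\Vert Z^\alpha u\Vert_{L^\infty}^2$ is $\lec\nu\Vert u\Vert_{2,\infty}^2+\nu\Vert\partial_z u\Vert_{1,\infty}^2$, which produces the last term of~\eqref{EQ.Inf2} (later absorbed by the maximal‑regularity estimates). Finally, squaring~\eqref{EQ.Inf1} and using $T\le1$ adds $\Vert\eta(t)\Vert_{L^\infty}^2$ to the left side; assembling these estimates and running Gronwall's inequality gives~\eqref{EQ.Inf2}.

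I expect the main obstacle to be the careful handling of the Laplacian commutator in part~(ii): the coefficients $\varphi^{-1}c^k_{j,\varphi}$ are unbounded near $\partial\Omega$, so the decomposition must be arranged so that every singular factor multiplies either a quantity of the correct sign at a maximum or a quantity vanishing where the factor is singular, with the only residue being the $\nu$‑weighted $\Vert\partial_z u\Vert_{1,\infty}^2$ earmarked for later. A secondary technical point is the rigorous justification of the comparison principle up to the boundary under the Robin condition with $\mu\ge0$. The restriction $\mu\ge0$ in part~(ii) is genuine rather than an artifact: for $\mu<0$ the Robin heat semigroup amplifies boundary data and is not $L^\infty$‑bounded, so this scheme breaks down, in agreement with the discussion in Section~\ref{sec.pre}.
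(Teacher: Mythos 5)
Your overall architecture (Dirichlet structure for $\eta$ and for every $Z^\alpha$ containing $Z_3$, the favorable Robin sign when $\mu\ge 0$, commutator and pressure bounds as in Proposition~\ref{P.Con}, and a $\nu\int\Vert\partial_z u\Vert_{1,\infty}^2$ residue) matches the paper, whose own proof is only sketched: it performs $L^q$ estimates and sends $q\to\infty$, citing \cite[Proposition~6.1]{AK1} and \cite[Proposition~3.3]{AK2}; parts (i) and (iii) of your argument are fine. However, there is a genuine gap in your part (ii), precisely at the step you flag as the main obstacle: the treatment of the singular Laplacian commutator. Your item (a) asserts that the zeroth-order terms have a dissipative sign and may be discarded, but a direct computation shows the opposite. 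For $Z^\alpha=Z_3 Z_\hh^{\tilde\alpha}$ one has, with $f=Z_\hh^{\tilde\alpha}u_i$,
\begin{align}
	\nu(Z^\alpha\Delta-\Delta Z^\alpha)u_i
	=-2\nu\varphi'\partial_{zz}f-\nu\varphi''\partial_z f
	=-\frac{2\nu\varphi'}{\varphi}\partial_z Z^\alpha u_i
	+\nu\Bigl(\frac{2(\varphi')^2}{\varphi^2}-\frac{\varphi''}{\varphi}\Bigr)Z^\alpha u_i ,
	\llabel{referee1}
\end{align}
and since $\varphi=z/(1+z)$ gives $\frac{2(\varphi')^2}{\varphi^2}-\frac{\varphi''}{\varphi}=\frac{2}{z^2(1+z)}>0$, the zeroth-order term is amplifying, not dissipative, and its coefficient blows up like $\nu/z^2$. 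It also does not fall into your class (c): cancelling one factor of $\varphi$ via $Z^\alpha u_i/\varphi=\partial_z Z_3^{k-1}Z_\hh^{\tilde\alpha}u_i$ still leaves a coefficient of size $\nu/z$, so at a maximum point $x^*$ close to the boundary the term is of order $\nu|\partial_z Z^\beta u(x^*)|/z^*$ and is not bounded by $\nu\Vert\partial_z u\Vert_{1,\infty}$; a maximum of $|Z^\alpha u_i|$ attained at small $z^*$ is exactly the regime your scheme cannot control.

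The repair uses the quadratic structure rather than a sign: pair the commutator with $Z^\alpha u|Z^\alpha u|^{q-2}$ (or, in your pointwise formulation, track $|Z^\alpha u(x^*)|^2$), so that \emph{two} factors of $Z^\alpha u$ are available and $\frac{|Z^\alpha u|^2}{\varphi^2}=|\partial_z Z_3^{k-1}Z_\hh^{\tilde\alpha}u|^2\lec\Vert\partial_z u\Vert_{1,\infty}^2$ for $|\alpha|\le 2$, $k\ge1$ (using Lemma~\ref{L01}(ii) when $k=2$); together with Young's inequality this removes the singular factor entirely and produces exactly the $\nu\int_0^t\Vert\partial_z u\Vert_{1,\infty}^2\,ds$ term in \eqref{EQ.Inf2}, which is how the cited $L^q\to\infty$ argument closes (compare the pairing in \eqref{EQ53}). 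With that correction your drift terms (b) and genuinely bounded-coefficient terms (c) are handled as you say. A secondary imprecision: in assembling \eqref{EQ.Inf2} you square \eqref{EQ.Inf1}, which introduces $\Vert\eta\Vert_{L^\infty}\Vert Z_\hh p\Vert_{L^\infty}$ (or $\Vert Z_\hh p\Vert_{L^\infty}^2$) rather than the stated $\Vert u\Vert_{2,\infty}\Vert\nabla p\Vert_{2,\infty}$; this bookkeeping should be done at the level of the $\eta$ equation tested quadratically, not by squaring the integrated inequality.
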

 \colb
 
 We note that Proposition~\ref{P.Inf}(i) is needed
 for~\eqref{ap2}.
 Therefore, we rely on the $W^{1,3+\delta} \subset L^\infty$ embedding 
 rather than propagate $u$ in $L^\infty(0,T; W^{2,\infty})$. 
 When either $\mu \ge 0$ or $\nu = 0$, we do not
 assume that $\partial_z u$ is twice conormal differentiable.
 Hence, we estimate $\Vert u\Vert_{2,\infty}$ directly
 to establish \eqref{ap2} and \eqref{ap3}. 
 
 We note that the proof of Proposition~\ref{P.Inf}
 is almost identical to the proofs of~\cite[Proposition~6.1]{AK1}
 and~\cite[Proposition~3.3]{AK2}. The main idea is to
 perform $L^q$~estimates and send $q \to \infty$.
 The only difference is that we estimate $\nu \Vert \partial_z u\Vert_{2,\infty}$
 in Section~\ref{sec.max} and $\Vert \nabla p\Vert_{2,\infty}$
 in Section~\ref{sec.apri}.
 
 \startnewsection{Maximal parabolic regularity estimates}{sec.max}
 
 In this section, we estimate the higher order normal and conormal derivatives
 of $\partial_z u$, e.g., $\nu^\frac12 \partial_z u\in L^{p}W^{2,p}_\cco$ or
 $\nu^\frac12 \partial_z \eta \in L^p W^{1,p}_\cco$. Such terms either result from
 trace inequalities or commuting $Z_3$ with $\Delta$.  
 Therefore, we need the maximal regularity properties
 of the heat equation. To present them, let $t \in [0,T]$. Denoting
 $\Omega_t = (0,t)\times \Omega$ and 
 $\partial\Omega_t = (0,t)\times \partial\Omega$,
 we introduce the space-time anisotropic Sobolev
 spaces
 \begin{align}
 	W_{t,x}^{m,p}(\Omega_t)=W^{m,p}(\Omega_t)
 	=&
 	\{f \in L^p(\Omega_t) : \partial^r_t D_x^\alpha f \in L^p(\Omega_t), (r,\alpha) \in \mathbb{N}_0^4, 2r+|\alpha|\le m  \}
 	,
 	\label{ani.def}
 \end{align} 
 with the norms
 \begin{align}
 	\Vert f\Vert_{W^{m,p}(\Omega_t)}=
 	\sum_{2r+|\alpha|\le m} \Vert \partial^r_t D_x^\alpha f\Vert_{L^{p}(\Omega_t)}
 	,\llabel{norm.ani} 
 \end{align}
 for $m \in \mathbb{N}_0$. When $s>0$ and non-integer, let 
 $W^{s,p}(\Omega_t)$ be the space of functions
 with a finite norm
 \begin{align}
 	\begin{split}
 		\Vert f\Vert_{W^{s,p}(\Omega_t)}=&
 		\sum_{2r+|\alpha|<s}
 		\Vert \partial_t^r D_x^\alpha f\Vert_{L^{p}(\Omega_t)}
 		+
 		\sum_{2r+|\alpha|=\lfloor s\rfloor}
 		[\partial_t^r D_x^\alpha f]_{s-\lfloor s \rfloor,x,p,\Omega_t}
 		\\&+
 		\sum_{0<s-2r-|\alpha|<2}
 		[\partial_t^r D_x^\alpha f]_{\frac{s-2r-|\alpha|}{2},t,p,\Omega_t}
 		,\llabel{nonintnorm}
 	\end{split}
 \end{align}
 where the seminorms $[f]_{s',x,p,\Omega_t}$
 and $[f]_{s',t,p,\Omega_t}$ are given by
 \begin{align}
 	[f]_{s',x,p,\Omega_t}
 	=
 	\left(\int_{0}^t \int_{\Omega\times \Omega} \frac{|f(x,t)-f(y,t)|^p}{|x-y|^{3+ps'}}\,dxdydt\right)^\frac{1}{p}
 	,\label{spacesemi}
 \end{align}
 and
 \begin{align}
 	[f]_{s',t,p,\Omega_t}
 	=
 	\left(\int_{(0,t)^2}\int_{\Omega} \frac{|f(x,t)-f(x,\tau)|^p}{|t-\tau|^{1+ps'}}\,dxdydt\right)^\frac{1}{p}
 	.\label{timesemi}
 \end{align}
 Finally, we note that the spaces
 $W^{k,p}(\partial\Omega_t)$ and 
 $W^{s,p}(\partial\Omega_t)$ are defined
 upon replacing $\Omega_t$ with $\partial\Omega_t$ in \eqref{ani.def}--\eqref{timesemi}
 and $3+ps'$ with $2+ps'$ in \eqref{spacesemi}.
 Now, we state certain maximal regularity estimates
 for the heat equation.
 
 \cole
 \begin{Lemma}
 	\label{Lh}
 	Let $T,\nu>0$, $\mu \ge 0$, $t\in (0,T)$, $p\in (2,\infty)$, and assume that $v$ is a smooth solution of
 	the heat equation
 	\begin{align}
 		\begin{split}
 			(\partial_t - \nu\Delta)v &= f,
 			\inon{ in $\Omega_t$},
 			\\
 			v(0)&=v_0,
 			\inon{ in $\Omega$}
 			,\llabel{heat}
 		\end{split}
 	\end{align}
 	with either the Dirichlet boundary condition
 	\begin{align}
 		v &= 0,
 		\inon{ on $\partial \Omega_t$},
 		\llabel{dbdrt}
 	\end{align}
 	the Neumann boundary condition
 	\begin{align}
 		\partial_z v &= g,
 		\inon{ on $\partial \Omega_t$},
 		\llabel{nbdry}
 	\end{align}
 	or the Robin boundary condition
 	\begin{align}
 		\partial_z v -2\mu v  &= 0,
 		\inon{ on $\partial \Omega_t$},
 		\llabel{rbdry}
 	\end{align}
 	where $f$, $g$, and $v_0$ 
 	are given smooth functions.
 	Then, we have the inequality
 	\begin{align}
 		\begin{split}
 			\Vert D^2_x v\Vert&_{L^{p}(\Omega_t)}
 			+\nu^{-\frac{1}{2}}\Vert \nabla v\Vert_{L^{p}(\Omega_t)}
 			+\nu^{-1}(\Vert v\Vert_{L^{p}(\Omega_t)}+
 			\Vert \partial_t v\Vert_{L^{p}(\Omega_t)})
 			\\&\lec
 			\nu^{-1}\Vert f\Vert_{L^{p}(\Omega_t)}
 			+\nu^{-\frac{1}{p}}[\nabla v_0]_{1-\frac{2}{p},p,x,\Omega}
 			+\nu^{-\frac{1}{2}}\Vert \nabla v_0\Vert_{L^{p}}
 			+\nu^{-1}\Vert v_0\Vert_{L^{p}}
 			+I_{\partial \Omega}
 			,\label{maxheat}
 		\end{split}
 	\end{align}
 	where $I_{\partial \Omega}$ equals zero for the Dirichlet and Robin boundary conditions and
 	\begin{align}
 		I_{\partial \Omega}
 		=[g]_{1-\frac{1}{p},x,p,\partial \Omega_t} 
 		+\nu^{\frac{1-p}{2p}}
 		([g]_{\frac{1}{2}-\frac{1}{2p},t,p,\partial \Omega_t} + \Vert g\Vert_{L^{p}(\partial \Omega_t)})
 		,\llabel{EQneumann}
 	\end{align}
 	for the Neumann boundary condition.
 \end{Lemma}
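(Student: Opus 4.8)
The plan is to reduce everything to the unit-diffusivity heat equation by a parabolic rescaling of time, to invoke the classical maximal $L^p$-regularity theory for the heat equation on the half-space, and then to track carefully the powers of $\nu$ introduced by the rescaling. Concretely, if $v$ solves $(\partial_t-\nu\Delta)v=f$ on $\Omega_t$ with one of the three boundary conditions, set $w(x,s)=v(x,s/\nu)$. Then $w$ solves $(\partial_s-\Delta)w=F$ on $\Omega_{\nu t}$ with $F(x,s)=\nu^{-1}f(x,s/\nu)$, initial datum $w(0)=v_0$, and either the same Dirichlet or Robin boundary condition (both of which contain no time derivative and are therefore invariant under this scaling) or the Neumann datum $\tilde g(x,s)=g(x,s/\nu)$. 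Since $T\le 1$ and $\nu\le\bar\nu$, the rescaled time horizon $\nu t$ is bounded by $1$; in any case one may work on the infinite cylinder and restrict, so the maximal regularity constant is independent of the time horizon.

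For the rescaled problem I would quote the standard estimate (Ladyzhenskaya--Solonnikov--Ural'tseva, or the semigroup approach of Denk--Hieber--Pr\"uss): for $p\in(2,\infty)$,
\[
\Vert\partial_s w\Vert_{L^p(\Omega_{\nu t})}+\Vert D_x^2 w\Vert_{L^p(\Omega_{\nu t})}
\lec \Vert F\Vert_{L^p(\Omega_{\nu t})}+\Vert v_0\Vert_{B^{2-2/p}_{p,p}(\Omega)}+\Vert\tilde g\Vert_{W^{1-1/p,\,1/2-1/(2p)}_p(\partial\Omega_{\nu t})},
\]
where the boundary norm is absent for the Dirichlet and homogeneous Robin conditions; here the hypothesis $\mu\ge 0$ is exactly what guarantees that the Robin realisation of $\Delta$ on $\mathbb{R}^3_+$ is accretive and enjoys maximal $L^p$-regularity with homogeneous boundary data, so that $I_{\partial\Omega}=0$ in those cases. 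Since $2-2/p=1+(1-2/p)$ with $1-2/p\in(0,1)$, one has $B^{2-2/p}_{p,p}(\Omega)=W^{2-2/p,p}(\Omega)$ and
\[
\Vert v_0\Vert_{B^{2-2/p}_{p,p}}\simeq \Vert v_0\Vert_{L^p}+\Vert\nabla v_0\Vert_{L^p}+[\nabla v_0]_{1-\frac{2}{p},p,x,\Omega};
\]
because $v$ is a smooth solution, $v_0$ automatically satisfies the compatibility conditions, so the interpolation-space norm of $v_0$ is comparable to this Slobodeckij norm.

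It then remains to undo the rescaling. A change of variables gives $\Vert D_x^2 w\Vert_{L^p(\Omega_{\nu t})}=\nu^{1/p}\Vert D_x^2 v\Vert_{L^p(\Omega_t)}$, $\Vert\partial_s w\Vert_{L^p(\Omega_{\nu t})}=\nu^{1/p-1}\Vert\partial_t v\Vert_{L^p(\Omega_t)}$, $\Vert F\Vert_{L^p(\Omega_{\nu t})}=\nu^{1/p-1}\Vert f\Vert_{L^p(\Omega_t)}$, and for the Neumann datum $\Vert\tilde g\Vert_{L^p(\partial\Omega_{\nu t})}=\nu^{1/p}\Vert g\Vert_{L^p(\partial\Omega_t)}$, $[\tilde g]_{1-1/p,x,p,\partial\Omega_{\nu t}}=\nu^{1/p}[g]_{1-1/p,x,p,\partial\Omega_t}$, and $[\tilde g]_{1/2-1/(2p),t,p,\partial\Omega_{\nu t}}=\nu^{(3-p)/(2p)}[g]_{1/2-1/(2p),t,p,\partial\Omega_t}$. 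Dividing the rescaled estimate by $\nu^{1/p}$ (and, separately, using it as a bound for $\nu^{-1}\Vert\partial_t v\Vert_{L^p(\Omega_t)}$), and using $\nu\le\bar\nu<1$ to absorb the favorable extra powers, yields the claimed control of $\Vert D_x^2 v\Vert_{L^p(\Omega_t)}$, $\nu^{-1}\Vert\partial_t v\Vert_{L^p(\Omega_t)}$, and $\nu^{-1}\Vert v\Vert_{L^p(\Omega_t)}$ (the $L^p$-norm of $v$ itself being recovered from the Duhamel representation and the $L^p$-contractivity of the heat semigroup). Finally, the first-order term $\nu^{-1/2}\Vert\nabla v\Vert_{L^p(\Omega_t)}$ follows from interpolation: the Gagliardo--Nirenberg inequality $\Vert\nabla v\Vert_{L^p(\Omega_t)}\lec\Vert D_x^2 v\Vert_{L^p(\Omega_t)}^{1/2}\Vert v\Vert_{L^p(\Omega_t)}^{1/2}+\Vert v\Vert_{L^p(\Omega_t)}$, combined with the bounds $\Vert D_x^2 v\Vert_{L^p(\Omega_t)}\lec A$ and $\nu^{-1}\Vert v\Vert_{L^p(\Omega_t)}\lec A$ just obtained, gives $\Vert\nabla v\Vert_{L^p(\Omega_t)}\lec\nu^{1/2}A$.

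The main obstacle I anticipate is bookkeeping rather than conceptual: getting every power of $\nu$ right in the rescaling, particularly the homogeneity under parabolic scaling of the fractional-order anisotropic boundary seminorms $[\,\cdot\,]_{1/2-1/(2p),t,p,\partial\Omega_t}$ and of the initial-data trace term, and then matching them against the exponents in \eqref{maxheat}. On the functional-analytic side, the only delicate point is to make sure the cited maximal-regularity result is available on the half-space with exactly the homogeneous Robin condition under consideration (this is precisely where $\mu\ge 0$ enters) and with a constant independent of the length of the time interval.
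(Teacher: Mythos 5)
Your proposal is correct in substance, but it takes a genuinely different reduction than the paper. The paper rescales \emph{space and amplitude}, setting $\bar v(x,s)=\nu^{-1/2}v(\sqrt{\nu}x,s)$, so that the full anisotropic estimate $\Vert \bar v\Vert_{W^{2,p}_{t,x}}\lec \Vert\bar f\Vert_{L^p}+\Vert\bar v_0\Vert_{W^{2-2/p,p}}+\Vert\bar g\Vert$ unwinds \emph{term by term} into exactly the weights of \eqref{maxheat} (including the lower-order terms $\nu^{-1}\Vert v\Vert_{L^p(\Omega_t)}$ and $\nu^{-1/2}\Vert\nabla v\Vert_{L^p(\Omega_t)}$, and the precise $\nu^{-1/p}$, $\nu^{(1-p)/(2p)}$ factors on the initial and boundary seminorms); the price is that the Robin coefficient becomes $\nu$-dependent, so the constant must be uniform in that coefficient (this is where $\mu\ge0$ enters for the paper). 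You instead rescale \emph{time}, $w(x,s)=v(x,s/\nu)$, which keeps the Robin coefficient fixed (so $\mu\ge 0$ is only needed for accretivity/contractivity of the fixed Robin realisation) but shrinks the time horizon to $\nu t$, so your uniformity burden is in the length of the time interval rather than in the boundary coefficient; your $\nu$-bookkeeping for $\partial_t v$, $D^2_xv$, $f$, the initial trace, and the boundary seminorms is correct and in fact yields slightly stronger powers on the $\Vert g\Vert_{L^p(\partial\Omega_t)}$ and initial-data terms than \eqref{maxheat} requires. The structural difference is that with time-only rescaling the lower-order terms do \emph{not} come out of the rescaled estimate with the weights $\nu^{-1}$ and $\nu^{-1/2}$, so you must supply them separately; your Gagliardo--Nirenberg interpolation for $\nu^{-1/2}\Vert\nabla v\Vert_{L^p(\Omega_t)}$ does this correctly, while your ``Duhamel plus contractivity'' remark for $\nu^{-1}\Vert v\Vert_{L^p(\Omega_t)}$ is incomplete in the inhomogeneous Neumann case (the semigroup formula acquires a boundary term); this is easily repaired without contractivity by writing $v(t)=v_0+\int_0^t\partial_t v$, which gives $\nu^{-1}\Vert v\Vert_{L^p(\Omega_t)}\lec t^{1/p}\nu^{-1}\Vert v_0\Vert_{L^p}+t\,\nu^{-1}\Vert\partial_t v\Vert_{L^p(\Omega_t)}$ and is absorbed by what you have already proved, using $T\le1$. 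With that small fix, your route delivers \eqref{maxheat} at the same level of rigor as the paper's appeal to the classical constant-coefficient maximal regularity results.
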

 \colb
 
 We note that $[f]_{s',x,p,\Omega}$
 stands for the spatial Gagliardo-Nirenberg seminorm of $f$.
 In addition, we note that Lemma~\ref{Lh} implicitly 
 assumes the compatibility conditions on the initial data
 \begin{align}
 	v_0 = 0, \inon{ on $\partial \Omega$}
 	,\llabel{cc1}
 \end{align}
 for the Dirichlet boundary condition,
 \begin{align}
 	\partial_z v_0 = g(0), \inon{ on $\partial \Omega$}
 	,\llabel{cc2}
 \end{align}
 for the Neumann boundary condition, and
 \begin{align}
 	\partial_z v_0 - 2\mu v_0 = 0, \inon{ on $\partial \Omega$}
 	,\llabel{cc3}
 \end{align}
 for the Robin boundary condition.
 When $\mu <0 $, we apply Lemma~\ref{Lh} with Dirichlet and Neumann boundary conditions
 to the smooth
 solutions of the Navier-Stokes equations. In this case, we write $g = 2\mu u_\hh$.
 When $\mu \ge 0$, we employ the same lemma with the Dirichlet and Robin boundary conditions.
 In both cases, the compatibility conditions for utilizing Lemma~\ref{Lh} read as in
 the last two equations in \eqref{inidata}.
 For the rest of this section, we assume that \eqref{inidata} holds, and we 
 justify these assumptions by 
 an approximation argument in Section~\ref{sec.main}. 
 
 \begin{proof}[Proof of Lemma~\ref{Lh}]
 	Let $0 < t \le T, \nu>0$ and $\mu \ge 0$ be fixed. Rescaling $v$, $f$, $g$ and $v_0$ as
 	\begin{align}
 		\bar{v}(x,s)=\frac{1}{\sqrt{\nu}}v(\sqrt{\nu}x,s)
 		\comma
 		\bar{f}(x,s)=\frac{1}{\sqrt{\nu}}f(\sqrt{\nu}x,s)
 		\comma
 		\bar{v}_0(x) = \frac{1}{\sqrt{\nu}}v_0(\sqrt{\nu}x)
 		\comma
 		\bar{g}(x_\hh,s) = g(\sqrt{\nu}x_\hh,s)
 		.\llabel{EQM00}
 	\end{align}
 	The function $\bar{v}$ solves
 	\begin{align}
 		\begin{split}
 			(\partial_t - \Delta)\bar{v} &= \bar{f},
 			\inon{ in $\Omega_t$},
 			\\
 			\bar{v}(0)&=\bar{v}_0,
 			\inon{ in $\Omega$}
 			,\llabel{heatrescaled}
 		\end{split}
 	\end{align}
 	either with the Dirichlet boundary condition
 	\begin{align}
 		\bar{v} = 0,
 		\inon{ on $\partial \Omega_t$}
 		,\llabel{EQM01}
 	\end{align}
 	or with the Neumann boundary condition 
 	\begin{align}
 		\partial_z \bar{v} &= \bar{g},
 		\inon{ on $\partial \Omega_t$}
 		.\llabel{EQM002}
 	\end{align}
 	or with the Robin boundary condition 
 	\begin{align}
 		\partial_z \bar{v} -\frac{2\mu}{\sqrt{\nu}}\bar{v} &= 0,
 		\inon{ on $\partial \Omega_t$}
 		.\llabel{EQM02}
 	\end{align}
 	Using the explicit representation of $\bar{v}$, 
 	we arrive at
 	\begin{align}
 		\Vert \bar{v}\Vert_{W_{t,x}^{2,p}(\Omega_t)}
 		\lec
 		\Vert \bar{f}\Vert_{L^{p}(\Omega_t)}
 		+\Vert \bar{v}_0\Vert_{W^{2-\frac{2}{p},6}(\Omega)}
 		+\Vert \bar{g}\Vert_{W^{1-\frac{1}{p}}(\partial \Omega)}
 		,\label{maxheatrescaled}
 	\end{align}
 	where the implicit constant is independent of~$\nu$
 	and $\bar{g}=0$ for the Dirichlet and Robin boundary conditions.
 	We refer the reader to~\cite[Chapter IV]{LSU} as well as \cite{DHP, KM} for 
 	the proof of \eqref{maxheatrescaled}.
 	Now, we may explicitly compute the norms in \eqref{maxheatrescaled},
 	concluding \eqref{maxheat}.
 \end{proof}
 
 In the rest of this section, we estimate each term having $\nu$
 as a factor on the right-hand sides of 
 \eqref{EQ.Con}, \eqref{EQ.Nor}, \eqref{EQ.Nor2}, \eqref{EQ.Pre}, and \eqref{EQ.Inf2}.
 First, we consider the terms involving $u$.
 
 \cole
 \begin{lemma}\label{Lmax1}
 	Let $\mu \in \mathbb{R}$, $\nu \in (0,\bar{\nu}]$, $p \in (2,\infty)$, and assume that $(u,p)$ is a smooth solution of \eqref{NSE0}, \eqref{hnavierbdry} on~$[0,T]$ with a smooth initial datum $u_0$. Then,  we have
 	\begin{align}
 		\begin{split}
 			&\sum_{0\le |\alpha|\le j} \left(\nu \Vert D^2_x Z^\alpha u\Vert_{L^{p}(\Omega_t)}+	
 			\nu^\frac12 \Vert \nabla Z^\alpha u\Vert_{L^{p}(\Omega_t)}
 			+\Vert \partial_t Z^\alpha u\Vert_{L^{p}(\Omega_t)}\right)
 			\lec \mathcal{M}_{j,p} + \mathcal{M}_{0,j,p} + \mathcal{M}_{\mu,j,p}
 			\\&\indeq=
 			\left( \int_0^t 
 			\biggl(\Vert u\Vert_{j+1,p}^p(\Vert u\Vert_{2,\infty}^p+\Vert \nabla u\Vert_{L^{\infty}}^p+1)
 			+ \Vert \nabla p\Vert_{j,p}^p\biggr)\,ds
 			\right)^\frac{1}{p}
 			\\&\indeq \indeq
 			+\sum_{0\le |\alpha|\le j}
 			\left(\nu^{\frac{p-1}{p}}[\nabla Z^\alpha u(0)]_{1-\frac{2}{p},p,x,\Omega}
 			+\nu^{\frac{1}{2}}\Vert \nabla Z^\alpha u(0)\Vert_{L^{p}}
 			+\Vert Z^\alpha u(0)\Vert_{L^{p}}
 			\right)
 			\\&\indeq \indeq
 			+\mu(\sgn(\mu)-1)\sum_{0\le |\alpha|\le j}
 			\left(\nu[Z^\alpha u]_{1-\frac{1}{p},x,p,\partial \Omega_t} 
 			+\nu^{\frac{1+p}{2p}}
 			\left([Z^\alpha u]_{\frac{1}{2}-\frac{1}{2p},t,p,\partial \Omega_t} + \Vert Z^\alpha u\Vert_{L^{p}(\partial \Omega_t)}\right)
 			\right)
 			,\llabel{max1}
 		\end{split}
 	\end{align}
 	for $j=1,2$ and $t \in [0,T]$.
 \end{lemma}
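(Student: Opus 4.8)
The plan is to apply the rescaled heat estimate of Lemma~\ref{Lh} to each conormal derivative $Z^\alpha u$ with $|\alpha|\le j$, and to run a (finite) induction on $j\in\{0,1,2\}$, anchored at the commutator-free case $j=0$, so that the Laplacian commutator produced by $Z_3$ can be fed into the estimate one conormal order lower. First I would record that, applying $Z^\alpha$ to \eqref{NSE0} componentwise (cf.\ \eqref{EQ46}), each component of $v=Z^\alpha u$ solves a scalar heat equation $(\partial_t-\nu\Delta)v=f_\alpha$ with
\[
f_\alpha=\nu\bigl(Z^\alpha\Delta u-\Delta Z^\alpha u\bigr)-Z^\alpha(u\cdot\nabla u)-Z^\alpha\nabla p ,
\]
and with boundary data determined as follows: if $\alpha$ contains a factor of $Z_3$ then $Z^\alpha u|_{\partial\Omega}=0$ because $\varphi(0)=0$, so $v$ obeys the Dirichlet condition; if $Z^\alpha=Z^{\tilde\alpha}_\hh$ then the third component obeys the Dirichlet condition (as $u_3=0$ on $\partial\Omega$), while the horizontal part obeys $\partial_z Z^{\tilde\alpha}_\hh u_\hh=2\mu Z^{\tilde\alpha}_\hh u_\hh$ on $\partial\Omega$ by \eqref{hnavierbdry}, i.e.\ the Robin condition when $\mu\ge0$ and the Neumann condition with datum $g=2\mu Z^{\tilde\alpha}_\hh u_\hh|_{\partial\Omega}$ when $\mu<0$. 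The compatibility conditions needed by Lemma~\ref{Lh} follow from applying $Z^\alpha$ to the boundary conditions imposed on the initial data earlier in this section.

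Next I would invoke Lemma~\ref{Lh} for $v=(Z^\alpha u)_i$, $i=1,2,3$, and multiply the resulting inequality through by $\nu$; its left side becomes $\nu\Vert D^2_x Z^\alpha u\Vert_{L^p(\Omega_t)}+\nu^{1/2}\Vert\nabla Z^\alpha u\Vert_{L^p(\Omega_t)}+\Vert Z^\alpha u\Vert_{L^p(\Omega_t)}+\Vert\partial_t Z^\alpha u\Vert_{L^p(\Omega_t)}$, and its right side becomes $\Vert f_\alpha\Vert_{L^p(\Omega_t)}+\nu\cdot(\text{initial-data terms})+\nu\,I_{\partial\Omega}$. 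A direct computation shows that $\nu$ times the initial-data terms of Lemma~\ref{Lh}, summed over $|\alpha|\le j$, is exactly $\mathcal{M}_{0,j,p}$, and that $\nu\,I_{\partial\Omega}$, nonzero only in the Neumann case $\mu<0$, equals $2|\mu|$ times the boundary seminorms of $Z^{\tilde\alpha}_\hh u$ --- using $\nu\cdot\nu^{(1-p)/2p}=\nu^{(p+1)/2p}$ --- so that summing over $|\alpha|\le j$ reproduces $\mathcal{M}_{\mu,j,p}$ (the prefactor $\mu(\sgn\mu-1)$ equalling $2|\mu|$ for $\mu<0$ and $0$ for $\mu\ge0$). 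It then remains to bound $\Vert f_\alpha\Vert_{L^p(\Omega_t)}=\bigl(\int_0^t\Vert f_\alpha(s)\Vert_{L^p}^p\,ds\bigr)^{1/p}$ term by term.

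For the advection term I would prove the pointwise-in-time bound $\Vert Z^\alpha(u\cdot\nabla u)\Vert_{L^p}\lec\Vert u\Vert_{j+1,p}\bigl(\Vert u\Vert_{2,\infty}+\Vert\nabla u\Vert_{L^\infty}\bigr)$ by exactly the conormalization of Proposition~\ref{P.Con}: expand by the Leibniz rule, insert $\varphi/\varphi$ into the terms containing $\partial_z$, and use the Hardy inequality \eqref{EQ.u3}, Lemma~\ref{L01}, and the interpolation inequality \eqref{EQ.int}; the norm $\Vert u\Vert_{2,\infty}$ enters precisely because \eqref{EQ.u3} applied to $u_3/\varphi$ costs a conormal derivative. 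Raising this to the $p$th power and integrating in $s$ gives a bound by $\mathcal{M}_{j,p}^p$. For the pressure term, using that $Z_\hh$ commutes with $\nabla$ and Lemma~\ref{L01} to pull $\partial_z$ past $Z_3^k$, one obtains $\Vert Z^\alpha\nabla p\Vert_{L^p}\lec\Vert\nabla p\Vert_{j,p}$, which again contributes to $\mathcal{M}_{j,p}$. For the Laplacian commutator, Lemma~\ref{L01}(iii)--(iv) gives
\[
\nu\bigl\Vert Z^\alpha\Delta u-\Delta Z^\alpha u\bigr\Vert_{L^p(\Omega_t)}
\lec\sum_{|\alpha'|\le j-1}\Bigl(\nu\Vert D^2_x Z^{\alpha'}u\Vert_{L^p(\Omega_t)}+\nu^{1/2}\Vert\nabla Z^{\alpha'}u\Vert_{L^p(\Omega_t)}\Bigr),
\]
which is precisely the left-hand side of the asserted inequality at level $j-1$ and so, by the inductive hypothesis together with the monotonicity of $\mathcal{M}_{j,p}$, $\mathcal{M}_{0,j,p}$, $\mathcal{M}_{\mu,j,p}$ in $j$, is bounded by $\mathcal{M}_{j,p}+\mathcal{M}_{0,j,p}+\mathcal{M}_{\mu,j,p}$. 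The base case $j=0$ carries no Laplacian commutator: $f_0=-u\cdot\nabla u-\nabla p$ with $\Vert u\cdot\nabla u\Vert_{L^p}\le\Vert u\Vert_{L^p}\Vert\nabla u\Vert_{L^\infty}\le\Vert u\Vert_{1,p}\Vert\nabla u\Vert_{L^\infty}$, so Lemma~\ref{Lh} closes it outright. Summing the resulting inequalities over $|\alpha|\le j$ and over $i$ yields the claim.

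The main obstacle is the Laplacian commutator. In contrast with the terms handled by Young's inequality inside the energy estimates of Sections~\ref{sec.co}--\ref{sec.no}, the contribution $\nu(Z^\alpha\Delta u-\Delta Z^\alpha u)$ reproduces $\nu\Vert D^2_x Z^{\alpha'}u\Vert_{L^p(\Omega_t)}$ with no spare power of $\nu$, hence cannot be absorbed into the left-hand side at the same order; the induction on $j$ is what makes it controllable. The only other point requiring care is the $L^p$ conormal product estimate for $Z^\alpha(u\cdot\nabla u)$, which demands the same Hardy/commutator bookkeeping as in Section~\ref{sec.co} but is otherwise routine.
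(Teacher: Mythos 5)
Your proposal is correct and follows essentially the same route as the paper: apply the rescaled maximal regularity estimate of Lemma~\ref{Lh} componentwise to $Z^\alpha u$, with the Dirichlet condition for $u_3$ and for any multi-index containing $Z_3$, and the Robin (for $\mu\ge 0$) or Neumann with $g=2\mu Z^{\tilde\alpha}_\hh u_\hh$ (for $\mu<0$) condition for purely horizontal derivatives of $u_\hh$, then bound the advection and pressure terms by conormalization and absorb the $\nu$-weighted Laplacian commutator by induction on the conormal order (the paper inducts on $|\alpha|$, you on $j$, which is the same device). Your bookkeeping of the $\nu$-powers for the initial-data seminorms and the boundary terms, including the prefactor $\mu(\sgn\mu-1)$, matches the paper's computation.
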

 \colb
 
 We use Lemma~\ref{Lmax1} with $j=2$
 for the conormal derivative and the pressure estimates, while
 the case $j=1$ is needed for the $L^\infty$ estimates when~$\mu \ge 0$.
 
 \begin{proof}[Proof of Lemma~\ref{Lmax1}]
 	We only present the estimates when $j=2$. We recall that $Z^\alpha u_i$ solves
 	\begin{align}
 		(\partial_t -\nu\Delta)Z^\alpha u_i
 		=
 		-Z^\alpha(u\cdot \nabla u_i)-Z^\alpha \partial_i p + \nu(Z^\alpha \Delta -\Delta Z^\alpha )u_i,
 		\llabel{EQ0M24}
 	\end{align}
 	with the boundary conditions depending on $i=1,2,3$, and $\alpha \in \mathbb{N}^3_0$.
 	Now, we consider $Z^\alpha u_i = Z^{\tilde{\alpha}}_\hh Z_3^k u_i$ 
 	for either $i=3$ and $|\alpha|\le 2$, or $i=1,2$, $|\alpha|\le 2$ and $1 \le k \le 2$.  
 	Invoking Lemma~\ref{Lh}
 	with the Dirichlet boundary condition, we obtain
 	\begin{align}
 		\begin{split}
 			\nu\Vert D^2_x Z^\alpha &u_i\Vert_{L^{p}(\Omega_t)}
 			+\nu^{\frac{1}{2}}\Vert \nabla Z^\alpha u_i\Vert_{L^{p}(\Omega_t)}
 			+\Vert Z^\alpha u_i\Vert_{L^{p}(\Omega_t)}+
 			\Vert \partial_t Z^\alpha u_i\Vert_{L^{p}(\Omega_t)}
 			\\&\lec
 			\Vert Z^\alpha (u\cdot \nabla u_i) \Vert_{L^{p}(\Omega_t)}
 			+\Vert Z^\alpha \partial_i p\Vert_{L^{p}(\Omega_t)}
 			+\nu \Vert (Z^\alpha\Delta - \Delta Z^\alpha) u_i\Vert_{L^{p}(\Omega_t)}
 			\\&\indeq
 			+\nu^{\frac{p-1}{p}}[\nabla Z^\alpha u_i(0)]_{1-\frac{2}{p},p,x,\Omega}
 			+\nu^{\frac{1}{2}}\Vert \nabla Z^\alpha u_i(0)\Vert_{L^{p}}
 			+\Vert Z^\alpha u_i(0)\Vert_{L^{p}}
 			,\label{EQM25}
 		\end{split}
 	\end{align}
 	for $t \in [0,T]$.
 	Next, we consider $Z^{\tilde{\alpha}}_\hh Z_3^k u_i$ for $i=1,2$, $k=0$ and $|\tilde{\alpha}|\le 2$. 
 	Employing Lemma~\ref{Lh}
 	with Neumann and Robin boundary conditions when
 	$\mu$ is negative and non-negative, respectively, we conclude 
 	\begin{align}
 		\begin{split}
 			\nu&\Vert D^2_x Z^\alpha u_i\Vert_{L^{p}(\Omega_t)}
 			+\nu^{\frac{1}{2}}\Vert \nabla Z^\alpha u_i\Vert_{L^{p}(\Omega_t)}
 			+\Vert Z^\alpha u_i\Vert_{L^{p}(\Omega_t)}+
 			\Vert \partial_t Z^\alpha u_i\Vert_{L^{p}(\Omega_t)}
 			\\&\lec
 			\Vert Z^\alpha(u\cdot \nabla u_i) \Vert_{L^{p}(\Omega_t)}
 			+\Vert Z^\alpha \partial_i p\Vert_{L^{p}(\Omega_t)}
 			\\&\indeq
 			+\nu^{\frac{p-1}{p}}[\nabla Z^\alpha u_i(0)]_{1-\frac{2}{p},p,x,\Omega}
 			+\nu^{\frac{1}{2}}\Vert \nabla Z^\alpha u_i(0)\Vert_{L^{p}}
 			+\Vert Z^\alpha u_i(0)\Vert_{L^{p}}
 			\\&\indeq
 			+\mu(\sgn(\mu)-1)
 			\left(\nu[Z^\alpha u_i]_{1-\frac{1}{p},x,p,\partial \Omega_t} 
 			+\nu^{\frac{1+p}{2p}}
 			\left([Z^\alpha u_i]_{\frac{1}{2}-\frac{1}{2p},t,p,\partial \Omega_t} + \Vert Z^\alpha u_i\Vert_{L^{p}(\partial \Omega_t)}\right)
 			\right)
 			,\label{EQM26}
 		\end{split}
 	\end{align}
 	for $t \in [0,T]$.
 	We now estimate the advection term and the commutator for the Laplacian. Upon conormalizing when necessary, we have
 	\begin{align}
 		\Vert Z^\alpha (u \cdot \nabla u_i)\Vert_{L^{p}(\Omega_t)}
 		\lec 
 		\left( \int_0^t 
 		\Vert u\Vert_{3,p}^p(\Vert u\Vert_{2,\infty}^p+\Vert \nabla u\Vert_{L^{\infty}}^p) \,ds
 		\right)^\frac{1}{p}
 		,\llabel{EQM27}
 	\end{align}
 	for $|\alpha|\le 2$ and $i =1,2,3$,
 	and
 	\begin{equation}
 		\llabel{EQM28}
 		\nu \Vert (Z^\alpha\Delta - \Delta Z^\alpha) u_i\Vert_{L^{p}(\Omega_t)}
 		\lec 
 		\begin{cases}
 			\nu\Vert D^2_x u_i\Vert_{L^{p}(\Omega_t)}+\nu\Vert \nabla u_i\Vert_{L^{p}(\Omega_t)}, & k=1, |\alpha|=1
 			\\
 			\nu\Vert D^2_x Z_\hh u_i\Vert_{L^{p}(\Omega_t)}+\nu\Vert \nabla Z_\hh u_i\Vert_{L^{p}(\Omega_t)}, & k=1, |\alpha|=2
 			\\
 			\nu\Vert D^2_x Z_3 u_i\Vert_{L^{p}(\Omega_t)}+\nu\Vert \nabla Z_3 u_i\Vert_{L^{p}(\Omega_t)}
 			\\ +\nu\Vert D^2_x u\Vert_{L^{p}}(\Omega_t)+\nu\Vert \nabla u\Vert_{L^{p}}(\Omega_t), & k=2, |\alpha|=2,
 		\end{cases}
 	\end{equation}
 	for $i=1,2,3$ and $t \in [0,T]$.
 	Therefore, 
 	we may control the commutator for the Laplacian term by the right-hand sides of \eqref{EQM25} and \eqref{EQM26}
 	by employing an induction argument on $|\alpha|$.
 	This concludes the proof of Lemma~\ref{Lmax1}.
 \end{proof}
 
 Now, we present the maximum regularity properties of $\eta$.
 
 \cole
 \begin{lemma}\label{Lmax2}
 	Let $\mu \in \mathbb{R}$, $\nu \in (0,\bar{\nu}]$, $p \in (2,\infty)$, and assume that $(u,p)$ is a smooth solution of \eqref{NSE0}, \eqref{hnavierbdry} on~$[0,T]$ with a smooth initial datum $u_0$. Then, for $\eta$ defined in \eqref{eta}, we have 
 	\begin{align}
 		\begin{split}
 			\sum_{0\le |\alpha|\le j}& \left(\nu \Vert D^2_x Z^\alpha \eta\Vert_{L^{p}(\Omega_t)}+	
 			\nu^\frac12 \Vert \nabla Z^\alpha \eta\Vert_{L^{p}(\Omega_t)}\right)\lec \mathcal{N}_{j,p}+\mathcal{N}_{0,j,p}
 			\\&=
 			\left( \int_0^t 
 			\biggl((\Vert \eta\Vert_{j+1,p}^p+\Vert u\Vert_{j+1,p}^p)(\Vert u\Vert_{2,\infty}^p+\Vert \nabla u\Vert_{L^{\infty}}^p+1)
 			+ \Vert p\Vert_{j+1,p}^p\biggr)\,ds
 			\right)^\frac{1}{p}
 			\\& \indeq
 			+\sum_{0\le |\alpha|\le j}
 			\left(\nu^{\frac{p-1}{p}}[\nabla Z^\alpha \eta(0)]_{1-\frac{2}{p},p,x,\Omega}
 			+\nu^{\frac{1}{2}}\Vert \nabla Z^\alpha \eta(0)\Vert_{L^{p}}
 			+\Vert Z^\alpha \eta(0)\Vert_{L^{p}}
 			\right)
 			,\llabel{max2}
 		\end{split}
 	\end{align}
 	for $j=0,1$ and $t \in [0,T]$.
 \end{lemma}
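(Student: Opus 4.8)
The plan is to follow the scheme of the proof of Lemma~\ref{Lmax1}, the essential simplification being that $\eta$ satisfies a homogeneous Dirichlet boundary condition, so that only the Dirichlet case of Lemma~\ref{Lh} (for which $I_{\partial\Omega}=0$) is ever invoked. Fix $j\in\{0,1\}$ and $|\alpha|\le j$. From \eqref{EQ63}, $Z^\alpha\eta$ solves a heat equation with forcing $\sum_{i=1}^{4}\mathcal{R}_{\alpha,i}$, and moreover $Z^\alpha\eta=0$ on $\partial\Omega_t$ since $Z_\hh$ is tangential and $Z_3=\varphi\partial_z$ vanishes on $\{z=0\}$. Applying Lemma~\ref{Lh} with the Dirichlet boundary condition to $Z^\alpha\eta$, with $v_0=Z^\alpha\eta(0)$ and $f=\sum_{i=1}^{4}\mathcal{R}_{\alpha,i}$, therefore bounds $\nu\Vert D^2_x Z^\alpha\eta\Vert_{L^p(\Omega_t)}+\nu^{\frac12}\Vert\nabla Z^\alpha\eta\Vert_{L^p(\Omega_t)}$ (in fact also $\Vert Z^\alpha\eta\Vert_{L^p(\Omega_t)}+\Vert\partial_t Z^\alpha\eta\Vert_{L^p(\Omega_t)}$, which we simply discard) by $\sum_{i=1}^{4}\Vert\mathcal{R}_{\alpha,i}\Vert_{L^p(\Omega_t)}$ plus the terms $\nu^{\frac{p-1}{p}}[\nabla Z^\alpha\eta(0)]_{1-\frac2p,p,x,\Omega}+\nu^{\frac12}\Vert\nabla Z^\alpha\eta(0)\Vert_{L^p}+\Vert Z^\alpha\eta(0)\Vert_{L^p}$, whose sum over $|\alpha|\le j$ is exactly $\mathcal{N}_{0,j,p}$.

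It then remains to estimate $\Vert\mathcal{R}_{\alpha,i}\Vert_{L^p(\Omega_t)}$. For $i=1,2,3$ I would reuse the pointwise-in-time bounds already established in the proof of Proposition~\ref{P.Nor}: expanding $Z^\alpha(\omega\cdot\nabla u_\hh)$ and using \eqref{EQ62}--\eqref{EQ08}, Lemma~\ref{L03}, and the Hardy inequality \eqref{EQ.u3} (cf.\ \eqref{EQM11}) gives $\Vert\mathcal{R}_{\alpha,1}(s)\Vert_{L^p}\lec(\Vert\eta\Vert_{j+1,p}+\Vert u\Vert_{j+1,p})(\Vert u\Vert_{2,\infty}+\Vert\nabla u\Vert_{L^\infty}+1)$; since $\nabla_\hh^\perp p=(-Z_2p,Z_1p)$ and both $Z_\hh$ and $Z_3$ commute with $\partial_\hh$, one has $\Vert\mathcal{R}_{\alpha,2}(s)\Vert_{L^p}\lec\Vert p\Vert_{j+1,p}$; and conormalizing the $u_3\partial_z$ contribution as in \eqref{EQM12} (and using incompressibility for $Z_3u_3=-\varphi\,\nabla_\hh\cdot u_\hh$) yields $\Vert\mathcal{R}_{\alpha,3}(s)\Vert_{L^p}\lec\Vert u\Vert_{2,\infty}\Vert\eta\Vert_{j+1,p}$. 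Raising these to the $p$-th power, integrating over $(0,t)$, and summing over $|\alpha|\le j$ produces precisely the time integral in $\mathcal{N}_{j,p}$.

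The only genuinely new ingredient is the Laplacian commutator $\mathcal{R}_{\alpha,4}=\nu(Z^\alpha\Delta-\Delta Z^\alpha)\eta$, and this is where the (small) remaining work lies. It vanishes for $\alpha=0$ and for $\alpha=(\alpha_\hh,0)$ since $Z_\hh$ commutes with $\Delta$; for $Z^\alpha=Z_3$ one has $Z_3\Delta\eta-\Delta Z_3\eta=-(2\varphi'\partial_{zz}\eta+\varphi''\partial_z\eta)$, and, because $\varphi'$ and $\varphi''$ are bounded and $\nu\le\bar\nu\le1$, this is controlled in $L^p(\Omega_t)$ by $\nu\Vert D^2_x\eta\Vert_{L^p(\Omega_t)}+\nu\Vert\nabla\eta\Vert_{L^p(\Omega_t)}\lec\nu\Vert D^2_x\eta\Vert_{L^p(\Omega_t)}+\nu^{\frac12}\Vert\nabla\eta\Vert_{L^p(\Omega_t)}$, i.e.\ by the $|\alpha|=0$ terms on the left-hand side, which are already bounded by the base case. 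Note that, in contrast to the energy estimate in Proposition~\ref{P.Nor}, no singular factor $\varphi'/\varphi$ needs to be produced here, because no integration by parts is performed; this is exactly how the Laplacian commutator is absorbed in the proof of Lemma~\ref{Lmax1}. Collecting the four bounds, discarding the extraneous $\Vert Z^\alpha\eta\Vert_{L^p(\Omega_t)}+\Vert\partial_t Z^\alpha\eta\Vert_{L^p(\Omega_t)}$, and summing over $0\le|\alpha|\le j$ gives the claim. The main obstacle, such as it is, is bookkeeping: matching the conormal product estimates with the $W^{j+1,p}_\cco$-norms on the right-hand side, and checking that every commutator either vanishes by tangency or is of strictly lower conormal order, so that the short induction on $|\alpha|$ closes without any absorption of the dominant terms.
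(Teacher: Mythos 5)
Your proposal takes exactly the route the paper intends: the paper's proof of Lemma~\ref{Lmax2} is precisely ``apply Lemma~\ref{Lh} with the Dirichlet boundary condition (so $I_{\partial\Omega}=0$) to $Z^\alpha\eta$ and repeat the proof of Lemma~\ref{Lmax1}'', and your treatment of $\mathcal{R}_{\alpha,1}$, $\mathcal{R}_{\alpha,2}$, and of the Laplacian commutator by induction on $|\alpha|$ through the $|\alpha|=0$ base case is the same bookkeeping the paper carries out for $u$ in Lemma~\ref{Lmax1}. One inaccuracy should be fixed: equation \eqref{EQ63} reads $(\partial_t-\nu\Delta+u\cdot\nabla)Z^\alpha\eta=\sum_{i=1}^4\mathcal{R}_{\alpha,i}$, and Lemma~\ref{Lh} contains no transport term, so the forcing you feed into it cannot be $\sum_i\mathcal{R}_{\alpha,i}$ alone; it must be $\sum_i\mathcal{R}_{\alpha,i}-u\cdot\nabla Z^\alpha\eta$, equivalently $Z^\alpha(\omega\cdot\nabla u_\hh)+2\mu Z^\alpha\nabla_\hh^\perp p-Z^\alpha(u\cdot\nabla\eta)+\nu(Z^\alpha\Delta-\Delta Z^\alpha)\eta$, which is how the proof of Lemma~\ref{Lmax1} handles the analogous advection term for $u$. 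The omitted piece $u\cdot\nabla Z^\alpha\eta$ is harmless: writing $u_3\partial_z Z^\alpha\eta=(u_3/\varphi)\,Z_3Z^\alpha\eta$ and using \eqref{EQ.u3}, it is bounded in $L^p$ by $\Vert u\Vert_{1,\infty}\Vert\eta\Vert_{j+1,p}$, which is already absorbed into $\mathcal{N}_{j,p}$, so the stated estimate is unaffected--but the term has to appear in the forcing and be estimated. With that correction your argument is complete and coincides with the paper's.
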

 \colb
 
 Recalling that $Z^\alpha \eta|_{\partial \Omega} = 0$, the proof of Lemma~\ref{Lmax2} follows by employing Lemma~\ref{Lh} for the Dirichlet boundary condition and repeating the proof of Lemma~\ref{Lmax1}. 
 We also note that we need Lemma~\ref{Lmax2} with $j=0$ and $j=1$ for $\mu \ge 0$ and $\mu < 0$, respectively.
 Lastly, since we assume that $T\le 1$, the implicit constant in Lemmas~\ref{Lmax1} and~\ref{Lmax2} does not depend on $T$. 
  
 \startnewsection{Trace estimates}{sec.tr}
 
 In the current section, we estimate the boundary terms from Lemma~\ref{Lmax1} that appear only when $\mu <0$.
 For the terms involving spatial derivatives, we have the standard trace inequality.
 
 \cole
 \begin{lemma}\label{Lt1}
 	Let $\nu \in (0,\bar{\nu}]$, $p \in (2,\infty)$, and assume that $(u,p)$ is a smooth solution of \eqref{NSE0}, \eqref{hnavierbdry} on~$[0,T]$. Then, we have 
 	\begin{align}
 		\begin{split}
 			\sum_{0\le |\alpha|\le j}
 			\left(\nu[Z^\alpha u]_{1-\frac{1}{p},x,p,\partial \Omega_t} 
 			+\nu^{\frac{1+p}{2p}}
 			\Vert Z^\alpha u\Vert_{L^{p}(\partial \Omega_t)}
 			\right)
 			\lec
 			\nu^{\frac{1+p}{2p}}
 			\sum_{0\le |\alpha|\le j}
 			\left(\int_0^t \Vert Z^\alpha u\Vert_{W^{1,p}}^p \,ds\right)^\frac{1}{p}
 			,\llabel{tr1}
 		\end{split}
 	\end{align}
 	for $j=1,2$ and $t \in [0,T]$.
 \end{lemma}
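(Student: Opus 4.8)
The plan is to reduce the assertion to the classical trace embedding $W^{1,p}(\Omega)\hookrightarrow W^{1-\frac1p,p}(\partial\Omega)$ applied slicewise in time, and then to absorb the discrepancy between the powers of $\nu$ on the two sides by means of $\nu\le\bar{\nu}\le 1$.

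First I would fix $0\le|\alpha|\le j$ and a time $s\in(0,t)$, and apply the standard trace theorem to the smooth function $Z^\alpha u(\cdot,s)$ on the flat boundary $\partial\Omega=\{z=0\}\simeq\mathbb{R}^2$. Since the domain is fixed, this yields, with a constant depending only on $p$,
\begin{align*}
	[Z^\alpha u(\cdot,s)]_{1-\frac1p,p,\partial\Omega}
	+\Vert Z^\alpha u(\cdot,s)\Vert_{L^{p}(\partial\Omega)}
	\lec
	\Vert Z^\alpha u(\cdot,s)\Vert_{W^{1,p}(\Omega)}
	,
\end{align*}
where $[\cdot]_{1-\frac1p,p,\partial\Omega}$ is the two-dimensional spatial Gagliardo seminorm that appears, integrated in time, inside $[\cdot]_{1-\frac1p,x,p,\partial\Omega_t}$. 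Raising to the $p$-th power, integrating over $s\in(0,t)$, recalling \eqref{spacesemi} together with the boundary modification stated after \eqref{timesemi}, and taking the $p$-th root, we obtain
\begin{align*}
	[Z^\alpha u]_{1-\frac1p,x,p,\partial\Omega_t}
	+\Vert Z^\alpha u\Vert_{L^{p}(\partial\Omega_t)}
	\lec
	\left(\int_0^t \Vert Z^\alpha u\Vert_{W^{1,p}}^p\,ds\right)^{\frac1p}
	.
\end{align*}

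Next I would reinstate the powers of $\nu$. On the right-hand side of the claimed inequality the seminorm carries the factor $\nu^{\frac{1+p}{2p}}$; since $p>2$ we have $\frac{1+p}{2p}<1$, hence $\nu\le\nu^{\frac{1+p}{2p}}$ for $\nu\in(0,\bar{\nu}]\subset(0,1]$, so that $\nu\,[Z^\alpha u]_{1-\frac1p,x,p,\partial\Omega_t}\le\nu^{\frac{1+p}{2p}}[Z^\alpha u]_{1-\frac1p,x,p,\partial\Omega_t}$, while the $L^{p}(\partial\Omega_t)$ term already carries the factor $\nu^{\frac{1+p}{2p}}$. Multiplying the last display by $\nu^{\frac{1+p}{2p}}$ and summing over $0\le|\alpha|\le j$ yields the asserted estimate, with an implicit constant independent of $\nu$ and, since $T\le 1$, of $T$. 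There is no genuine obstacle here; the only point requiring a little care is the bookkeeping of the $\nu$-exponents and the observation that the trace constant is uniform in $\nu$, which is immediate because $\nu$ enters neither the half-space nor the trace operator. One could alternatively track the precise $\nu$-scaling through the rescaling $x\mapsto\sqrt{\nu}\,x$ used in the proof of Lemma~\ref{Lh}, but this refinement is unnecessary since the weaker power $\nu^{\frac{1+p}{2p}}$ suffices for the applications of Lemma~\ref{Lmax1}.
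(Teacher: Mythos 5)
Your argument is correct and matches the paper's intent: the paper gives no separate proof of Lemma~\ref{Lt1}, simply invoking the standard trace inequality $W^{1,p}(\Omega)\hookrightarrow W^{1-\frac1p,p}(\partial\Omega)$ slicewise in time, which is exactly what you do. Your explicit bookkeeping of the $\nu$-exponents (using $\nu\le\bar\nu\le1$ and $\tfrac{1+p}{2p}<1$ to absorb the factor $\nu$ in front of the seminorm) is sound and fills in the only detail the paper leaves implicit.
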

 \colb
 
 We treat the term involving a fractional derivative in time by employing the Fundamental Theorem of Calculus.
 
 \cole
 \begin{lemma}\label{Lt2}
 	Let $\nu \in (0,\bar{\nu}]$, $p \in (5,\infty)$, and assume that $(u,p)$ is a smooth solution of \eqref{NSE0}, \eqref{hnavierbdry} on~$[0,T]$. Then, we have 
 	\begin{align}
 		\begin{split}
 			\nu^{\frac{1+p}{2p}}\sum_{0\le |\alpha|\le j}
 			[Z^\alpha u]_{\frac{1}{2}-\frac{1}{2p},t,p,\partial \Omega_t} 
 			\lec 
 			\nu^{\frac{1+p}{2p}}\sum_{0\le |\alpha|\le j}
 			\Vert \partial_z Z^\alpha u\Vert_{L^{p}(\Omega_t)}^\frac{1}{p}
 			\Vert \partial_t Z^\alpha u\Vert_{L^{p}(\Omega_t)}^\frac{p-1}{p}
 			,\label{tr2}
 		\end{split}
 	\end{align}
 	for $j=1,2$ and $t \in [0,T]$.
 \end{lemma}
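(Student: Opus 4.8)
The plan is to fix a multi-index $\alpha$ with $0\le|\alpha|\le j$, write $g=Z^\alpha u$, establish the per-$\alpha$ bound
\[
[g]_{\frac12-\frac1{2p},t,p,\partial\Omega_t}\lec\Vert\partial_z g\Vert_{L^{p}(\Omega_t)}^{\frac1p}\Vert\partial_t g\Vert_{L^{p}(\Omega_t)}^{\frac{p-1}{p}},
\]
and then sum over $\alpha$ and multiply by $\nu^{\frac{1+p}{2p}}$ to obtain \eqref{tr2}; the per-$\alpha$ argument is identical for every $\alpha$ and every $j\in\{1,2\}$. Two elementary ingredients will be combined. The first is the Gagliardo trace inequality on the half-space, $\Vert h\Vert_{L^{p}(\partial\Omega)}^{p}\lec\Vert h\Vert_{L^{p}(\Omega)}^{p-1}\Vert\partial_z h\Vert_{L^{p}(\Omega)}$, which follows from $|h(x_\hh,0)|^{p}=-\int_{0}^{\infty}\partial_z(|h|^{p})\,dz$ (legitimate since $u$ is smooth and decays as $z\to\infty$; otherwise one inserts a fixed cutoff in $z$ and absorbs a lower-order interior term) together with H\"older's inequality in $x$. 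The second is the Fundamental Theorem of Calculus in time, $g(x,s)-g(x,\sigma)=\int_{\sigma}^{s}\partial_r g(x,r)\,dr$, followed by H\"older in $r$, which gives $\Vert g(\cdot,s)-g(\cdot,\sigma)\Vert_{L^{p}(\Omega)}\le|s-\sigma|^{1-\frac1p}\Vert\partial_t g\Vert_{L^{p}(\Omega_t)}$ for $s,\sigma\in(0,t)$.

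Applying the trace inequality to $x\mapsto g(x,s)-g(x,\sigma)$ and bounding $\partial_z$ of the difference by the triangle inequality yields
\[
\int_{\partial\Omega}|g(x_\hh,0,s)-g(x_\hh,0,\sigma)|^{p}\,dx_\hh
\lec\Vert g(\cdot,s)-g(\cdot,\sigma)\Vert_{L^{p}(\Omega)}^{p-1}\bigl(\Vert\partial_z g(\cdot,s)\Vert_{L^{p}(\Omega)}+\Vert\partial_z g(\cdot,\sigma)\Vert_{L^{p}(\Omega)}\bigr).
\]
Substituting the time estimate for the first factor, inserting this into the definition of the boundary time seminorm (the analogue of \eqref{timesemi} with $\Omega_t$ replaced by $\partial\Omega_t$ and $s'=\frac12-\frac1{2p}$, so that $1+ps'=\frac{p+1}{2}$), and using the symmetry in $(s,\sigma)$, we reduce to
\[
[g]_{\frac12-\frac1{2p},t,p,\partial\Omega_t}^{p}\lec\Vert\partial_t g\Vert_{L^{p}(\Omega_t)}^{p-1}\int_{0}^{t}\int_{0}^{t}|s-\sigma|^{\frac{p^2-5p+2}{2p}}\Vert\partial_z g(\cdot,s)\Vert_{L^{p}(\Omega)}\,d\sigma\,ds,
\]
where the power of $|s-\sigma|$ comes from $\frac{(p-1)^2}{p}-\frac{p+1}{2}$. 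Since $\frac{p^2-5p+2}{2p}>-1$ is equivalent to $(p-1)(p-2)>0$, which holds because $p>5>2$, the integral $\int_{0}^{t}|s-\sigma|^{(p^2-5p+2)/(2p)}\,d\sigma$ is bounded uniformly (here $t\le T\le1$ is used), and one further application of H\"older in $s$ (again with $t\le1$) leaves $\Vert\partial_t g\Vert_{L^{p}(\Omega_t)}^{p-1}\Vert\partial_z g\Vert_{L^{p}(\Omega_t)}$. Taking $p$-th roots gives the per-$\alpha$ bound, and summing over $0\le|\alpha|\le j$ and multiplying by $\nu^{\frac{1+p}{2p}}$ completes the proof; all constants are $T$-independent.

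The only delicate point is the bookkeeping of the two H\"older steps in the time variable: they must be arranged so that the diagonal singularity $|s-\sigma|^{(p-1)^2/p}$ produced by the difference of $\partial_t g$-integrals is integrable against the Slobodeckij weight $|s-\sigma|^{-(p+1)/2}$ — which is exactly where the lower bound on $p$ enters — and so that the exponents $\tfrac1p$ on $\Vert\partial_z g\Vert_{L^{p}(\Omega_t)}$ and $\tfrac{p-1}{p}$ on $\Vert\partial_t g\Vert_{L^{p}(\Omega_t)}$ come out precisely as in \eqref{tr2}. Everything else is routine.
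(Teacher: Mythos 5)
Your proof is correct, and it reaches \eqref{tr2} from the same elementary ingredients the paper uses — the fundamental theorem of calculus in the normal variable, the fundamental theorem of calculus in time, and H\"older — but it packages them differently. The paper extends $Z^\alpha u_\hh$ to a compactly supported function on $\mathbb{R}^3\times(-t,2t)$, performs the pointwise FTC estimates on $|\hat{f}(\cdot,\theta)-\hat{f}(\cdot,\tau)|^p$ directly under the Slobodeckij weight, and then applies a chain of H\"older inequalities in $\zeta$, $x_\hh$, $\theta$, $\tau$, and $s$; the hypothesis $p>5$ is used there to make the resulting power of $|\theta-\tau|$ nonnegative, and the estimate is transferred back by approximation and continuity of the extension operator. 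You instead isolate the one-dimensional trace inequality $\Vert h\Vert_{L^p(\partial\Omega)}^p\lec\Vert h\Vert_{L^p(\Omega)}^{p-1}\Vert \partial_z h\Vert_{L^p(\Omega)}$, apply it to the time increment $g(\cdot,s)-g(\cdot,\sigma)$, and combine it with the $L^p$-in-space modulus of continuity in time; this avoids the space-time extension altogether, and your exponent bookkeeping ($\kappa=\frac{p^2-5p+2}{2p}>-1$ iff $(p-1)(p-2)>0$) shows that the diagonal integrability in fact holds for every $p>2$, so your arrangement does not actually need $p>5$ (harmless, since the lemma only claims $p\in(5,\infty)$, but worth noting as a small sharpening over the paper's ordering of the H\"older steps, which genuinely uses the larger exponent). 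Two minor remarks: the ``delicate point'' about cutoffs is unnecessary, since the trace inequality holds whenever $h,\partial_z h\in L^p(\Omega)$ by the one-dimensional argument in $z$ (a function with $h(x_\hh,\cdot),\partial_z h(x_\hh,\cdot)\in L^p(0,\infty)$ tends to zero as $z\to\infty$), so no lower-order interior term arises; and $|s-\sigma|^{(p-1)^2/p}$ is a vanishing factor rather than a singularity, the only singularity being the weight $|s-\sigma|^{-(p+1)/2}$, which your count handles correctly.
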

 \colb
 
 \begin{proof}[Proof of Lemma~\ref{Lt2}]
 	For $0\le t \le T$ and $\alpha \in \mathbb{N}^3_0$ with $|\alpha|\le 2$ given, we let $f = Z^\alpha u_\hh$,
 	and employing a density argument, we assume that $f$ has an extension $\hat{f} \in C_c^\infty(\mathbb{R}^3 \times (-t,2t))$.
 	Then, we consider $\theta, \tau \in (0,t)$ and write
 	\begin{align}
 		\begin{split}
 			|f(x_\hh, 0,\theta)-f(x_\hh, 0,\tau)|^p
 			&\lec 
 			\int_0^z 
 			|\partial_\zeta \hat{f}(x_\hh, \zeta,\theta)| 
 			|\hat{f}(x_\hh, \zeta,\theta)-\hat{f}(x_\hh, \zeta,\tau)|^{p-1} 
 			\,d\zeta
 			\\&\indeq   +\int_0^z 
 			|\partial_\zeta \hat{f}(x_\hh, \zeta,\tau)| 
 			|\hat{f}(x_\hh, \zeta,\theta)-\hat{f}(x_\hh, \zeta,\tau)|^{p-1} 
 			,\label{EQM29}
 		\end{split}
 	\end{align}
 	We denote $s = 1/2- 1/(2p)$ and multiply both sides of \eqref{EQM29} by $|\theta-\tau|^{-1-ps}$.
 	Then, we utilize the Fundamental Theorem of Calculus again, obtaining
 	\begin{align}
 		\begin{split}
 			\frac{|f(x_\hh, 0,\theta)-f(x_\hh, 0,\tau)|^p}{|\theta-\tau|^{1+ps}}
 			&\lec 
 			\int_0^z 
 			\frac{|\partial_\zeta \hat{f}(x_\hh, \zeta,\theta)|}{|t-\tau|^{1+ps}} 
 			\left|
 			\int_\tau^\theta \partial_s \hat{f}(x_\hh, \zeta,s)\,ds\right
 			|^{p-1} 
 			\,d\zeta
 			\\&\indeq +  
 			\int_0^z 
 			\frac{|\partial_\zeta \hat{f}(x_\hh, \zeta,\tau)|}{|\theta-\tau|^{1+ps}} 
 			\left|
 			\int_\tau^\theta \partial_s \hat{f}(x_\hh, \zeta,s)\,ds\right
 			|^{p-1} 
 			\,d\zeta
 			.\llabel{EQM30}
 		\end{split}	
 	\end{align}
 	Upon integrating, we arrive at
 	\begin{align}
 		\begin{split}
 			[f]_{\frac{1}{2}-\frac{1}{2p},t,p,\partial \Omega_t}^p 
 			&\lec 
 			\int_{(-t,2t)^2} \int_{\mathbb{R}^2} \int_\mathbb{R} 
 			\frac{|\partial_\zeta \hat{f}(x_\hh, \zeta,\theta)|}{|\theta-\tau|^{1+ps}} 
 			\left|
 			\int_\tau^\theta \partial_s \hat{f}(x_\hh, \zeta,s)\,ds\right
 			|^{p-1} 
 			\,d\zeta dx_\hh d\theta d\tau
 			.\llabel{EQM31}
 		\end{split}	
 	\end{align}
 	Now, we employ Hölder's inequality in the $\zeta$ variable and write
 	\begin{align}
 		\begin{split}
 			&[f]_{\frac{1}{2}-\frac{1}{2p},t,p,\partial \Omega_t}^p 
 			\\&\indeq\lec 
 			\int_{(-t,2t)^2} \int_{\mathbb{R}^2} 
 			\left(\int_\mathbb{R}
 			|\partial_\zeta \hat{f}(x_\hh, \zeta,\theta)|^p \,d\zeta\right)^\frac{1}{p}
 			\frac{1}{|\theta-\tau|^{1+ps}} 
 			\left(\int_\mathbb{R}
 			\left|
 			\int_\tau^\theta \partial_s \hat{f}(x_\hh, \zeta,s)\,ds\right
 			|^{p} \,d\zeta\right)^\frac{p-1}{p}  
 			\,dx_\hh d\theta d\tau
 			,\llabel{EQM32}
 		\end{split}	
 	\end{align}
 	from where we utilize Hölder's inequality in $x_\hh$, $t$ and $\tau$ variables to obtain
 	\begin{align}
 		\begin{split}
 			[f]_{\frac{1}{2}-\frac{1}{2p},t,p,\partial \Omega_t}^p 
 			&\lec 
 			\Vert \partial_z \hat{f}\Vert_{L^{p}((-t,2t)^2 \times \mathbb{R}^3)}
 			\int_{(-t,2t)^2} \frac{1}{|\theta-\tau|^\frac{(1+ps)p}{p-1}}
 			\int_{\mathbb{R}^3} 
 			\left|
 			\int_\tau^\theta \partial_s \hat{f}(x_\hh, \zeta,s)\,ds\right
 			|^{p}  
 			\,d\zeta dx_\hh d\theta d\tau
 			.\llabel{EQM33}
 		\end{split}	
 	\end{align}
 	Finally, we apply Hölder's inequality in the $s$ variable and write
 	\begin{align}
 		\begin{split}
 			[f]_{\frac{1}{2}-\frac{1}{2p},t,p,\partial \Omega_t}^p 
 			&\lec 
 			\Vert \partial_z \hat{f}\Vert_{L^{p}((-t,2t)^2 \times \mathbb{R}^3)}
 			\left(\int_{(-t,2t)^2} \frac{|\theta-\tau|^{p-1}}{|\theta-\tau|^\frac{(1+ps)p}{p-1}}
 			\int_{\mathbb{R}^3} 
 			\int_{-t}^{2t} |\partial_s \hat{f}(x_\hh, \zeta,s)|^p \,ds
 			\,d\zeta dx_\hh d\theta d\tau\right)^\frac{p-1}{p}
 			.\llabel{EQM34}
 		\end{split}	
 	\end{align}
 	Recalling that $t\le T\le 1$, $p>5$, and $s<1$, 
 	it follows that the term involving $|\theta-\tau|$ is bounded. Therefore,
 	we conclude
 	\begin{align}
 		\begin{split}
 			[f]_{\frac{1}{2}-\frac{1}{2p},t,p,\partial \Omega_t}^p 
 			&\lec 
 			\Vert \partial_z \hat{f}\Vert_{L^{p}((-t,2t)^2 \times \mathbb{R}^3)}
 			\Vert \partial_t \hat{f}\Vert_{L^{p}((-t,2t)^2 \times \mathbb{R}^3)}^{p-1}
 			,\llabel{EQM35}
 		\end{split}	
 	\end{align}
 	from where \eqref{tr2} follows by approximation
 	and the continuity of the extension operator.
 \end{proof}

 \startnewsection{Concluding the a~priori estimates}{sec.apri}
 
 We first refine the pressure estimates utilizing Lemma~\ref{Lmax1}.
 Employing Young's inequality for the last term in \eqref{EQ.Pre}, we obtain
 \begin{align}
 	\nu \sum_{1\le |\tilde{\theta}|\le 3}\Vert u(t)\Vert_{3,p}^{\frac{p-1}{p}}
 	\Vert \nabla Z^{\tilde{\theta}}_\hh u(t)\Vert_{L^p}^{\frac{1}{p}}
 	\lec 
 	\nu^p \Vert \nabla Z u(t)\Vert_{2,p} + \Vert u(t)\Vert_{3,p}
 	,\llabel{EQM36}
 \end{align}
 from where, applying the $L^p$ norm in time and Lemma~\ref{Lmax1},
 we arrive at
 \begin{align}
 	\begin{split}
 		\int_0^t  (\Vert D^2 p\Vert_{2,p}^p+
 		\Vert \nabla p\Vert_{2,p}^p) \,ds 
 		&\lec
 		\nu^{p(p-1)} \int_0^t \Vert \nabla p\Vert_{2,p}^p \,ds
 		+ 
 		\int_0^t 
 		\biggl(\Vert u\Vert_{3,p}^p (\Vert u\Vert_{2,\infty}^p+\Vert \eta\Vert_{L^{\infty}}^p+1)
 		\biggr)\,ds
 		\\&\indeq
 		+\nu^{p(p-1)} \mathcal{M}^p_{0,2,p} + \nu^{p(p-1)}\mathcal{M}^p_{\mu,2,p} 
 		,\llabel{EQM37}
 	\end{split}
 \end{align}
 for $t \in [0,T]$.
 Recalling that $\nu \le \bar{\nu}$, we may choose $\bar{\nu}$ sufficiently small
 and absorb the pressure term to conclude
 \begin{align}
 	\begin{split}
 		\int_0^t  (\Vert D^2 p\Vert_{2,p}^p+
 		\Vert \nabla p\Vert_{2,p}^p) \,ds 
 		&
 		\lec
 		\int_0^t 
 		\biggl(\Vert u\Vert_{3,p}^p (\Vert u\Vert_{2,\infty}^p+\Vert \eta\Vert_{L^{\infty}}^p+1)
 		\biggr)\,ds
 		\\&\indeq
 		+\nu^{p(p-1)} \mathcal{M}^p_{0,2,p} + \nu^{p(p-1)}\mathcal{M}^p_{\mu,2,p} 
 		,\label{EQM38}
 	\end{split}
 \end{align}
 for $t \in [0,T]$.
 The same reasoning also yields
 \begin{align}
 	\begin{split}
 		\int_0^t  (\Vert D^2 p\Vert_{1,p}^p+
 		\Vert \nabla p\Vert_{1,p}^p) \,ds 
 		&
 		\lec
 		\int_0^t 
 		\biggl(\Vert u\Vert_{2,p}^p (\Vert u\Vert_{2,\infty}^p+\Vert \eta\Vert_{L^{\infty}}^p+1)
 		\biggr)\,ds
 		\\&\indeq+\nu^{p(p-1)} \mathcal{M}^p_{0,1,p} + \nu^{p(p-1)}\mathcal{M}^p_{\mu,1,p} 
 		,\label{EQM39}
 	\end{split}
 \end{align}
 for $t \in [0,T]$.
 Now, we use \eqref{EQM38} and \eqref{EQM39} to refine the
 maximal regularity estimates given by Lemma~\ref{Lmax1} and Lemma~\ref{Lmax2}.
 For $u$, we obtain
 \begin{align}
 	\begin{split}
 		\sum_{0\le |\alpha|\le j}& \left(\nu^p \Vert D^2_x Z^\alpha u\Vert_{L^{p}(\Omega_t)}^p+	
 		\nu^\frac{p}{2} \Vert \nabla Z^\alpha u\Vert_{L^{p}(\Omega_t)}^p
 		+\Vert \partial_t Z^\alpha u\Vert_{L^{p}(\Omega_t)}^p\right)
 		\\&\lec 
 		\int_0^t 
 		\Vert u\Vert_{j+1,p}^p(\Vert u\Vert_{2,\infty}^p+\Vert \nabla u\Vert_{L^{\infty}}^p+1)
 		\,ds
 		+\mathcal{M}^p_{0,j,p} + \mathcal{M}^p_{\mu,j,p}
 		,\label{max1ref}
 	\end{split}
 \end{align}
 for $j=1,2$ and $t \in [0,T]$.
 Next, for $\eta$, we have
 \begin{align}
 	\begin{split}
 		\sum_{0\le |\alpha|\le j}& \left(\nu^p \Vert D^2_x Z^\alpha \eta\Vert_{L^{p}(\Omega_t)}^p+	
 		\nu^\frac{p}{2} \Vert \nabla Z^\alpha \eta\Vert_{L^{p}(\Omega_t)}^p\right)
 		\\&\lec 
 		\int_0^t 
 		(\Vert \eta\Vert_{j+1,p}^p+\Vert u\Vert_{j+1,p}^p)(\Vert u\Vert_{2,\infty}^p+\Vert \nabla u\Vert_{L^{\infty}}^p+1)
 		\,ds
 		+\mathcal{N}^p_{0,j,p}
 		,\label{max2ref}
 	\end{split}
 \end{align}
 for $j=0,1$ and $t \in [0,T]$.
 Next using Lemmas~\ref{Lt1} and~\ref{Lt2}, we further refine \eqref{max1ref} for the case $\mu <0$.
 In particular, these two lemmas with Young's inequality imply
 \begin{align}
 	\begin{split}
 		\mu^p &\sum_{0\le |\alpha|\le j}
 		\left(\nu^p [Z^\alpha u]_{1-\frac{1}{p},x,p,\partial \Omega_t}^p 
 		+\nu^{\frac{1+p}{2}}
 		\left([Z^\alpha u]_{\frac{1}{2}-\frac{1}{2p},t,p,\partial \Omega_t}^p + \Vert Z^\alpha u\Vert_{L^{p}(\partial \Omega_t)}^p\right)
 		\right)
 		\\& \lec
 		\int_0^t \Vert u\Vert_{2,p}^p \,ds
 		+
 		\nu^{\frac{1+p}{2}}
 		\sum_{0\le |\alpha|\le j}
 		\int_0^t \left(\Vert \nabla Z^\alpha u\Vert_{L^{p}}^p 
 		+ \Vert \partial_t Z^\alpha u\Vert_{L^{p}}
 		\right) \,ds
 		,\label{EQM42}     
 	\end{split}
 \end{align}
 for $j=1,2$, $p>5$, and $t \in [0,T]$.
 Recalling that $\nu \le \bar{\nu}$ and $\bar{\nu}$ is sufficiently small,
 \eqref{max1ref} implies
 \begin{align}
 	\begin{split}
 		\sum_{0\le |\alpha|\le j}& \left(\nu \Vert D^2_x Z^\alpha u\Vert_{L^{p}(\Omega_t)}^p+	
 		\nu^\frac12 \Vert \nabla Z^\alpha u\Vert_{L^{p}(\Omega_t)}^p
 		+\Vert \partial_t Z^\alpha u\Vert_{L^{p}(\Omega_t)}^p\right)
 		\\&\lec 
 		\int_0^t 
 		\Vert u\Vert_{j+1,p}^p(\Vert u\Vert_{2,\infty}^p+\Vert \nabla u\Vert_{L^{\infty}}^p+1)
 		\,ds
 		+\mathcal{M}^p_{0,j,p}
 		,\label{max1refref}
 	\end{split}
 \end{align}
 for $j=1,2$ and $t \in [0,T]$.
 
 Now, we proceed to establish \eqref{ap1}. Recalling $P$ from \eqref{pol1},
 we let $p=3+\delta$ and employ \eqref{EQ.Con}, \eqref{EQ.Nor}, and \eqref{EQ.Nor2}.
 Then, we take $p=6+2\delta$ and utilize \eqref{EQ.Con} again. Finally, 
 we add the resulting inequalities with \eqref{EQ.Inf1} obtaining
 \begin{align}
 	\begin{split}
 		\Vert u&\Vert_{3,3+\delta}^{3+\delta}
 		+\Vert u\Vert_{3,6+2\delta}^{6+2\delta}
 		+\Vert \eta\Vert_{2,3+\delta}^{3+\delta}
 		+\Vert \eta\Vert_{L^{\infty}}
 		\\&\lec
 		P_0
 		+\int_0^t 
 		(\Vert u\Vert_{2,\infty}
 		+\Vert u\Vert_{W^{1,\infty}} 
 		+1)P \,ds
 		+\int_0^t
 		(\Vert \nabla p\Vert_{3,3+\delta}^{3+\delta}
 		+\Vert \nabla p\Vert_{3,6+2\delta}^{6+2\delta}
 		+\Vert Z_\hh p\Vert_{L^\infty})\,ds
 		\\&\indeq
 		+\nu^{\frac{3+\delta}{2}}\int_0^t (\Vert \partial_z u\Vert_{2,3+\delta}^{3+\delta}
 		+\Vert \partial_z \eta\Vert_{1,3+\delta}^{3+\delta})\,ds
 		+\nu^{3+\delta}\int_0^t \Vert \partial_z u\Vert_{2,6+2\delta}^{6+2\delta}\,ds
 		,\label{EQM40}
 	\end{split}
 \end{align}
 for $t \in [0,T]$.
 To estimate the pressure terms in \eqref{EQM40}, we use the embedding $\Vert Z_\hh p\Vert_{L^\infty} \lec \Vert \nabla p\Vert_{1,3+\delta}$,
 the inequality
 \begin{align}
 	\Vert \nabla p\Vert_{3,3+\delta}
 	\lec 
 	\Vert \nabla p\Vert_{L^{3+\delta}}+\Vert D^2_x p\Vert_{2,6+2\delta}
 	,\llabel{EQM41}
 \end{align}
 Lemma~\ref{L.Pre} with $(j,p)=(0,3+\delta)$, and
 the pressure estimates \eqref{EQM38} with $p=6+2\delta$, and $T\le 1$. Consequently, 
 \begin{align}
 	\begin{split}
 		\Vert u&\Vert_{3,3+\delta}^{3+\delta}
 		+\Vert u\Vert_{3,6+2\delta}^{6+2\delta}
 		+\Vert \eta\Vert_{2,3+\delta}^{3+\delta}
 		+\Vert \eta\Vert_{L^{\infty}}
 		\\&\lec
 		P_0 
 		+ \nu^{(6+2\delta)(5+2\delta)}\mathcal{M}^{6+2\delta}_{0,2,6+2\delta}
 		+ \nu^{(6+2\delta)(5+2\delta)} \mathcal{M}^{6+2\delta}_{\mu,2,6+2\delta}
 		+\int_0^t 
 		(\Vert u\Vert_{2,\infty}^{6+2\delta}
 		+\Vert u\Vert_{W^{1,\infty}}^{6+2\delta} 
 		+1)P \,ds
 		\\&\indeq
 		+\nu^{\frac{3+\delta}{2}}\int_0^t (\Vert \partial_z u\Vert_{2,3+\delta}^{3+\delta}
 		+\Vert \partial_z \eta\Vert_{1,3+\delta}^{3+\delta})\,ds
 		+\nu^{3+\delta}\int_0^t \Vert \partial_z u\Vert_{2,6+2\delta}^{6+2\delta}\,ds
 		,\llabel{EQM43}
 	\end{split}
 \end{align}
 for $t \in [0,T]$.
 Recalling $\mathcal{M}^{6+2\delta}_{\mu,2,6+2\delta}$ from Lemma~\ref{Lmax1}, 
 we estimate it employing \eqref{EQM42} and write
 \begin{align}
 	\begin{split}
 		\Vert u&\Vert_{3,3+\delta}^{3+\delta}
 		+\Vert u\Vert_{3,6+2\delta}^{6+2\delta}
 		+\Vert \eta\Vert_{2,3+\delta}^{3+\delta}
 		+\Vert \eta\Vert_{L^{\infty}}
 		\\&\lec
 		P_0 
 		+ \nu^{(6+2\delta)(5+2\delta)}\mathcal{M}^{6+2\delta}_{0,2,6+2\delta}
 		+\int_0^t 
 		(\Vert u\Vert_{2,\infty}^{6+2\delta}
 		+\Vert u\Vert_{W^{1,\infty}}^{6+2\delta} 
 		+1)P \,ds
 		\\&\indeq
 		+\nu^{\frac{3+\delta}{2}}\int_0^t (\Vert \partial_z u\Vert_{2,3+\delta}^{3+\delta}
 		+\Vert \partial_z \eta\Vert_{1,3+\delta}^{3+\delta})\,ds
 		+\nu^{3+\delta}\int_0^t (\Vert \partial_z u\Vert_{2,6+2\delta}^{6+2\delta}
 		+\Vert \partial_t u\Vert_{2,6+2\delta}^{6+2\delta})\,ds
 		,\llabel{EQM44}
 	\end{split}
 \end{align}
 where we have used that $(6+2\delta)(5+2\delta)>3+\delta$.
 Before invoking the maximal regularity estimates, we use \eqref{EQ62} obtaining
 \begin{align}
 	\int_0^t \Vert \partial_z u\Vert_{2,3+\delta}^{3+\delta} \,ds
 	\lec
 	\int_0^t \Vert \eta\Vert_{2,3+\delta}^{3+\delta} +\Vert u\Vert_{3,3+\delta}^{3+\delta} \,ds
 	\lec
 	\int_0^t P \,ds
 	.\llabel{EQM45}
 \end{align}
 Now, we employ \eqref{max1refref} for $(p,j)=(6+2\delta,2)$
 and \eqref{max2ref} for $(p,j)=(3+\delta,1)$, from where
 it follows that
 \begin{align}
 	\begin{split}
 		\Vert u\Vert_{3,3+\delta}^{3+\delta}
 		&+\Vert u\Vert_{3,6+2\delta}^{6+2\delta}
 		+\Vert \eta\Vert_{2,3+\delta}^{3+\delta}
 		+\Vert \eta\Vert_{L^{\infty}}
 		\\&\lec
 		P_0 
 		+ \mathcal{M}^{6+2\delta}_{0,2,6+2\delta}
 		+\mathcal{N}_{0,1,3+\delta}^{3+\delta}
 		+\int_0^t 
 		(\Vert u\Vert_{2,\infty}^{6+2\delta}
 		+\Vert u\Vert_{W^{1,\infty}}^{6+2\delta} 
 		+1)P \,ds
 		.\llabel{EQM46}
 	\end{split}
 \end{align}
 Utilizing \eqref{EQ62} for the Lipschitz norm of $u$
 and the inequality
 \begin{align}
 	\Vert u\Vert_{2,\infty}
 	\lec
 	\Vert \nabla u\Vert_{2,3+\delta}+\Vert u\Vert_{2,3+\delta}
 	\lec
 	\Vert \eta\Vert_{2,3+\delta}+\Vert u\Vert_{3,3+\delta}
 	,\llabel{EQM47}
 \end{align}
 we arrive at
 \begin{align}
 	\Vert u\Vert_{3,3+\delta}^{3+\delta}
 	+\Vert u\Vert_{3,6+2\delta}^{6+2\delta}
 	+\Vert \eta\Vert_{2,3+\delta}^{3+\delta}
 	+\Vert \eta\Vert_{L^{\infty}}
 	\lec
 	P_0 
 	+ \mathcal{M}^{6+2\delta}_{0,2,6+2\delta}
 	+\mathcal{N}_{0,1,3+\delta}^{3+\delta}
 	+\int_0^t 
 	P \,ds
 	.\llabel{EQM48}
 \end{align}
 Finally, the $L^2$ estimates follow from standard considerations.
 Indeed, we may use incompressibility to eliminate the pressure term vanishes and
 the trace and Young's inequality to absorb the boundary term,
 and \eqref{ap1} follows.
 
 Next, we establish \eqref{ap2}. To achieve this, we employ 
 \eqref{EQ.Con}, and \eqref{EQ.Nor} for $p=6$.
 Recalling $Q$ from \eqref{pol1}, we utilize \eqref{EQ.Inf2} and add the resulting inequalities
 obtaining
 \begin{align}
 	\begin{split}
 		\Vert u\Vert_{3,6}^{6}
 		+\Vert \eta\Vert_{1,6}^{6}
 		+\Vert u\Vert_{2,\infty}^{2}
 		+\Vert \eta\Vert_{L^{\infty}}^2
 		&\lec
 		Q_0
 		+\int_0^t Q \,ds
 		+\int_0^t
 		(\Vert \nabla p\Vert_{3,6}^{6}
 		+\Vert \nabla p\Vert_{2,\infty}^2)\,ds
 		\\&\indeq
 		+\nu^{3}\int_0^t (\Vert \partial_z u\Vert_{2,6}^{6}
 		+\Vert \partial_z \eta\Vert_{L^6}^{6})\,ds
 		+\nu\int_0^t \Vert \partial_z u\Vert_{1,\infty}^{2}\,ds
 		,\llabel{EQM50}
 	\end{split}
 \end{align}
 for $t \in [0,T]$.
 We note that
 \begin{align}
 	\Vert \nabla p\Vert_{2,\infty} \lec \Vert D^2 p\Vert_{2,6} + \Vert \nabla p\Vert_{2,6}
 	,\llabel{EQM51}
 \end{align}
 from where, using the pressure estimates \eqref{EQM38}, we arrive at
 \begin{align}
 	\begin{split}
 		\Vert u&\Vert_{3,6}^{6}
 		+\Vert \eta\Vert_{1,6}^{6}
 		+\Vert u\Vert_{2,\infty}^{2}
 		+\Vert \eta\Vert_{L^{\infty}}^2
 		\\&
 		\lec
 		Q_0
 		+\int_0^t Q \,ds
 		+\nu^{3}\int_0^t (\Vert \partial_z u\Vert_{2,6}^{6}
 		+\Vert \partial_z \eta\Vert_{L^6}^{6})\,ds
 		+\nu\int_0^t \Vert \partial_z u\Vert_{1,\infty}^{2}\,ds
 		+\nu^{30} \mathcal{M}_{0,2,6}^6
 		,\llabel{EQM50}
 	\end{split}
 \end{align}
 for $t \in [0,T]$.
 Recalling the definition of $\mathcal{M}_{\mu,j,p}$ from Lemma~\ref{Lmax1},
 we note that no boundary terms appear when $\sgn(\mu)=1$.
 Now, for the terms involving $\partial_z u$ we have
 \begin{align}
 	\nu^3\Vert \partial_z u\Vert_{2,6} 
 	\lec
 	\nu^3(\Vert \eta\Vert_{2,6}+\Vert u\Vert_{3,6}
 	),\llabel{EQM51}
 \end{align}
 and 
 \begin{align}
 	\nu\Vert \partial_z u\Vert_{1,\infty}^2
 	\lec
 	\nu\Vert D^2_x u\Vert_{1,6}\Vert \nabla u\Vert_{1,6}
 	\lec
 	\nu^2 \Vert D^2_x u\Vert_{1,6}^2 + Q
 	.\llabel{EQM52} 
 \end{align}
 We conclude \eqref{ap2} upon employing $L^2$ estimates, the refined maximal regularity estimates \eqref{max1ref}
 for $(p,j)=(6,2)$ and $(6,1)$ and \eqref{max2ref} for $(p,j)=(6,0)$,
 as well as using that $\nu \le \bar{\nu}$ sufficiently small and $T \le 1$. 
 
 Finally, \eqref{ap3} follows by performing $L^2$ estimates, employing
 \eqref{EQ.Con} and \eqref{EQ.Pre} for $p=3+\delta$ and $\nu = 0$,
 as well as \eqref{EQ.Inf3} and the inequality
 \begin{align}
 	\Vert \nabla p\Vert_{2,\infty} \lec \Vert D^2 p\Vert_{2,3+\delta} + \Vert \nabla p\Vert_{2,3+\delta}
 	,\llabel{EQM53}
 \end{align}
 which concludes the proof of Proposition~\ref{P.Ap}.
 
 \startnewsection{Proofs of Theorems~\ref{T01},~\ref{T03}, and~\ref{T04}}{sec.main}
 
 In this section, we only present proofs of Theorem~\ref{T01} and Theorem~\ref{T04}
 as we may establish Theorem~\ref{T03} by adjusting the regularity of the initial datum $u_0$.
 To start, we assume that $u_0$ satisfying the assumptions in Theorem~\ref{T01} is given, and 
 we consider the heat equation
 \begin{align}
 	\begin{split}
 		(\partial_t - \Delta) v &= 0,
 		\hspace{0.6cm} \inon{ in $\Omega\times (0,\bar{\nu}]$},
 		\\
 		v_3 = 0, \text{ and } \partial_z v_\hh &= 2\mu v_\hh,
 		\inon{ on $\partial\Omega\times (0,\bar{\nu}]$},
 		\\
 		v(0) &= u_0
 		,\llabel{heateqn}
 	\end{split}	
 \end{align} 
 which has a unique solution that is smooth in positive time. 
 Moreover, upon letting $u_0^\nu(x) = v(\nu^3,x)$, we obtain the
 compatibility conditions
 \begin{align}
 	\begin{split}
 		\nabla \cdot u_0^\nu = 0 \comma
 		(u_0^\nu)_3|_{\partial \Omega} = 0, \text{ and }
 		\partial_z (u_0^\nu)_\hh|_{\partial \Omega} = 2\mu (u_0^\nu)_\hh|_{\partial \Omega},
 		\label{inidata}
 	\end{split}
 \end{align}
 as well as the bounds
 \begin{align}
 	\begin{split}
 		\sup_\nu \left(
 		\Vert u_0^\nu\Vert_{W^{3,6+2\delta}\cap W^{3,3+\delta}}
 		+\Vert \nabla u_0^\nu\Vert_{W^{2,3+\delta}\cap L^{\infty}}\right)
 		&\le C(\Vert u_0\Vert_{W^{3,6+2\delta}\cap W^{3,3+\delta}}
 		+\Vert \nabla u_0\Vert_{W^{2,3+\delta}\cap L^{\infty}}),
 		\\
 		\mathcal{M}_{0,2,6+2\delta}^{6+2\delta}(u_0^\nu)
 		+\mathcal{N}_{0,1,3+\delta}^{3+\delta}(\eta_0^\nu)
 		&\le
 		C\nu
 		,\label{inibds}
 	\end{split} 	
 \end{align}
 and the convergence
 \begin{align}
 	\Vert u_0^\nu - u_0\Vert_{L^{2}} \le C\nu^2 
 	.\llabel{iniconv}
 \end{align}
 Indeed, $u_0^\nu$ converges to $u_0$ in a stronger space. However, the $L^2$ convergence 
 suffices to establish the inviscid limit.
 Now, under standard considerations, \eqref{NSE0} and \eqref{hnavierbdry} with
 the initial datum $u_0^\nu$ has a sufficiently smooth unique solution
 for which \eqref{ap1} holds. Therefore, employing the bounds \eqref{inibds}
 and the Gronwall inequality, we conclude that \eqref{EQ.main2}
 holds on a time interval $[0,T]$ that is independent of $\nu$.  
 Finally, to obtain a limit, we need a compactness result.
 
 \cole
 \begin{Proposition}[Compactness of $\{u^\nu\}_\nu$]
 	\label{P01}
 	Let $\nu \in (0,\bar{\nu}]$, and assume that
 	$u^\nu \in L^\infty(0,T;H^1(\Omega)\cap W^{1,\infty}(\Omega))$
 	is a sequence of solutions
 	to \eqref{NSE0}--\eqref{hnavierbdry} on~$[0,T]$ with the initial data 
 	$u_0^\nu \in L^2$ satisfying $\Vert u^{\nu_1}_0-u^{\nu_2}_0\Vert_{L^{2}} \le \nu_1-\nu_2$.
 	Then, $\{u^\nu\}_\nu$ is a Cauchy sequence in $L^\infty(0,T;L^2(\Omega))$
 	with
 	\begin{align}
 		\sup_{[0,T]}\Vert u^{\nu_1}-u^{\nu_2}\Vert_{L^2}^2
 		\lec \nu_1 + \nu_2,
 		\llabel{cauchy}
 	\end{align}
 	where the implicit constant is independent of $\nu_1$ and~$\nu_2$.
 \end{Proposition}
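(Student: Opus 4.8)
The plan is to carry out a weak--strong $L^2$ stability estimate for the difference of the two solutions, the only point of care being that every term carrying a factor of $\nu_1$ or $\nu_2$ must be shown to contribute at order $O(\nu_1+\nu_2)$ rather than $O(1)$. Assume $\nu_1>\nu_2$ (as in the hypothesis), set $w=u^{\nu_1}-u^{\nu_2}$ and $q=p^{\nu_1}-p^{\nu_2}$, and let $M$ denote a bound, uniform in $\nu$, for $\Vert u^\nu\Vert_{L^\infty(0,T;H^1(\Omega)\cap W^{1,\infty}(\Omega))}$. Subtracting the two copies of \eqref{NSE0}, writing the viscous difference as $\nu_1\Delta u^{\nu_1}-\nu_2\Delta u^{\nu_2}=\nu_1\Delta w+(\nu_1-\nu_2)\Delta u^{\nu_2}$ and the advective difference as $u^{\nu_1}\cdot\nabla u^{\nu_1}-u^{\nu_2}\cdot\nabla u^{\nu_2}=u^{\nu_2}\cdot\nabla w+w\cdot\nabla u^{\nu_1}$, gives
\begin{align*}
 \partial_t w-\nu_1\Delta w+u^{\nu_2}\cdot\nabla w+w\cdot\nabla u^{\nu_1}+\nabla q=(\nu_1-\nu_2)\Delta u^{\nu_2}
 \comma \nabla\cdot w=0,
\end{align*}
together with $w_3=0$ and $\partial_z w_\hh=2\mu w_\hh$ on $\partial\Omega$ (both solutions obey \eqref{hnavierbdry}, which is linear in $u$), and $\Vert w(0)\Vert_{L^2}\le\nu_1-\nu_2$.

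First I would test this equation with $w$ in $L^2(\Omega)$. The transport term $(u^{\nu_2}\cdot\nabla w,w)$ vanishes since $\nabla\cdot u^{\nu_2}=0$ and $u^{\nu_2}\cdot n=0$ on $\partial\Omega$, and the pressure term $(\nabla q,w)$ vanishes since $\nabla\cdot w=0$ and $w\cdot n=0$; the vortex-stretching term obeys $|(w\cdot\nabla u^{\nu_1},w)|\le\Vert\nabla u^{\nu_1}\Vert_{L^\infty}\Vert w\Vert_{L^2}^2\le M\Vert w\Vert_{L^2}^2$, a Gronwall term. Integrating by parts in $-\nu_1(\Delta w,w)$ produces $\nu_1\Vert\nabla w\Vert_{L^2}^2$ together with the boundary term $2\mu\nu_1\Vert w_\hh\Vert_{L^2(\partial\Omega)}^2$ (after inserting the Navier condition): for $\mu\ge0$ this has the favourable sign, while for $\mu<0$ I would estimate it by $\epsilon\nu_1\Vert\nabla w\Vert_{L^2}^2+C_\epsilon\nu_1\Vert w\Vert_{L^2}^2$ via the trace inequality $\Vert w\Vert_{L^2(\partial\Omega)}^2\lesssim\Vert\nabla w\Vert_{L^2}\Vert w\Vert_{L^2}+\Vert w\Vert_{L^2}^2$ and Young, absorbing the first piece into the dissipation and using $\nu_1\le\bar\nu$ to turn the second into $C\Vert w\Vert_{L^2}^2$. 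For the source $(\nu_1-\nu_2)(\Delta u^{\nu_2},w)$, which cannot be bounded directly because $\Delta u^{\nu_2}$ is not controlled under the stated hypothesis, I would again integrate by parts: the interior part is $\le(\nu_1-\nu_2)\Vert\nabla u^{\nu_2}\Vert_{L^2}\Vert\nabla w\Vert_{L^2}\le\epsilon\nu_1\Vert\nabla w\Vert_{L^2}^2+C_\epsilon\frac{(\nu_1-\nu_2)^2}{\nu_1}M^2\le\epsilon\nu_1\Vert\nabla w\Vert_{L^2}^2+C(\nu_1-\nu_2)M^2$, using $(\nu_1-\nu_2)^2\le\nu_1(\nu_1-\nu_2)$; the boundary part equals $-2\mu(\nu_1-\nu_2)\int_{\partial\Omega}(u^{\nu_2})_\hh\cdot w_\hh$, which is $\lesssim(\nu_1-\nu_2)\Vert u^{\nu_2}\Vert_{L^2(\partial\Omega)}\Vert w\Vert_{L^2(\partial\Omega)}\lesssim(\nu_1-\nu_2)M\bigl(\Vert\nabla w\Vert_{L^2}^{1/2}\Vert w\Vert_{L^2}^{1/2}+\Vert w\Vert_{L^2}\bigr)$ by trace and the uniform $H^1$ bound, and after splitting off $\epsilon\nu_1\Vert\nabla w\Vert_{L^2}^2$ by Young the residual is $O(\nu_1)$ times a polynomial in $M$ and in $\Vert w\Vert_{L^2}\le\Vert w\Vert_{L^2}^2+1$.

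Collecting these estimates and absorbing all the $\epsilon\nu_1\Vert\nabla w\Vert_{L^2}^2$ contributions into the dissipation, I would arrive at a differential inequality of the shape $\frac{d}{dt}\Vert w\Vert_{L^2}^2\lesssim \mathcal P(M)\Vert w\Vert_{L^2}^2+(\nu_1+\nu_2)\mathcal P(M)$ on $[0,T]$ for some polynomial $\mathcal P$. Since $T\le1$ and $\Vert w(0)\Vert_{L^2}^2\le(\nu_1-\nu_2)^2\le\nu_1+\nu_2$ (using $\nu_1-\nu_2\le\bar\nu\le1$), Gronwall's inequality gives $\sup_{[0,T]}\Vert w\Vert_{L^2}^2\lesssim\nu_1+\nu_2$ with implicit constant depending only on $M$, hence independent of $\nu_1,\nu_2$; this is the assertion. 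The testing procedure and integrations by parts are justified by a routine density argument, using $w\in L^\infty(0,T;H^1(\Omega))$. I expect the main obstacle to be the boundary terms when $\mu<0$: there the energy identity does not yield a boundary term of favourable sign, so one must verify that each boundary contribution either comes with a compensating power of $\nu_i$ (exploiting $\nu_i\le\bar\nu$) or is of Gronwall type --- the Navier condition being used to replace the normal derivative of $u^{\nu_i}$ on $\partial\Omega$ by its zeroth-order trace, and the uniform $H^1$ bound to control $u^{\nu_2}$ there.
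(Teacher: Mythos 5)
Your proposal is correct and follows essentially the same route as the paper, which only sketches the argument as ``$L^2$ estimates on $u^{\nu_1}-u^{\nu_2}$ with integration by parts justified in the distributional sense''; your detailed execution supplies exactly what that sketch presupposes, in particular integrating the source $(\nu_1-\nu_2)\Delta u^{\nu_2}$ by parts so that only the uniform $H^1\cap W^{1,\infty}$ bounds (and not $\nu D_x^2u^\nu\in L^2L^2$) are needed, and handling the Navier boundary terms for $\mu<0$ by trace and Young's inequality as in the paper's estimate \eqref{boundary3}.
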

 \colb
 
 Proposition~\ref{P01} follows upon performing $L^2$
 estimates on $u^{\nu_1}-u^{\nu_2}$.
 We note that~\cite{AK1} has a similar result which requires $\nu D^2_x u^\nu \in L^2(0,T;L^2)$. However, we do not need
 this assumption since we may justify integration-by-parts in the distributional sense.
 In addition, $u^\nu\in L^\infty(0,T;H^1\cap W^{1,\infty})$
 follows from the interpolation inequality
 \begin{align}
 	\Vert \nabla u^\nu\Vert_{L^2}
 	\lec
 	\Vert u^\nu\Vert_{L^2}^\frac{3}{5}
 	\Vert \nabla u^\nu\Vert_{L^{\infty}}^\frac{2}{5}
 	+\Vert u^\nu\Vert_{L^{2}}
 	.\llabel{EQM43}
 \end{align}
 Now, we may utilize the strong convergence given by Proposition~\ref{P01}
 and the weak and weak* convergences by \eqref{EQ.main2}. Upon passing to a subsequence, we obtain a solution $u$ for the Euler equations.
 Moreover, since we have Lipschitz regularity, this solution is unique. 
 Finally, \eqref{invlimit1} follows from
 \begin{align}
 	\sup_{[0,T]}\Vert u^\nu-u\Vert_{L^{p}}
 	\lec
 	\sup_{[0,T]}(\Vert u^\nu-u\Vert_{L^{2}}^\frac{2p+6}{5p}\Vert \nabla u^\nu-u\Vert_{L^{\infty}}^\frac{3p-6}{5p}+\Vert u^\nu -u\Vert_{L^{2}}
 	)\lec
 	\nu^{\frac{p+3}{5p}},
 	\llabel{EQM44}
 \end{align}
 concluding the proof of Theorem~\ref{T01}.
 
 Next, the proof of Theorem~\ref{T04} follows closely
 to the one presented in~\cite{AK2}.
 We consider $u_0$ as in Theorem~\ref{T04}, and we let $\{u_0^r\}_{r>0} \in C^\infty(\Omega)$ be a sequence of divergence-free 
 smooth functions that are tangential on the boundary. 
 In particular, we may assume that $u_0^r \in H^5$ and
 \begin{align}
 	\begin{split}
 		u_0^r &\to u_0 \text{ strongly in } L^2(\Omega) \cap W^{3,3+\delta}_\cco(\Omega),
 		\\
 		u_0^r &\rightharpoonup u_0 \text{ weakly-* in } W^{1,\infty}(\Omega)\cap W^{2,\infty}_\cco(\Omega),
 		\llabel{EQ306}  
 	\end{split}
 \end{align}
 as $r \to 0$.
 Therefore, there is a unique $u^r \in C([0,T];H^5(\Omega))$
 that solves \eqref{euler} and \eqref{eulerb} with the initial datum $u_0^r$, where $T>0$.
 We note that $T$ is independent of $r$, and this is due to the control on the Lipschitz norm 
 established by a~priori estimates. Indeed, we may continue the solutions $u^r$
 as long as $\nabla u^r$ stays in $L^1(0,T;L^\infty)$. Employing 
 the a~priori estimates, we conclude that  
 $u^r \in L^\infty(0,T_0;L^2 \cap W^{3,3+\delta}_\cco \cap W^{1,\infty}\cap W^{2,\infty}_\cco)$
 are bounded independent of~$r$.
 Next, following the arguments in~\cite{AK2}, we may 
 prove that $u^r \in L^\infty(0,T_0;L^2(\Omega))$ is a Cauchy sequence.
 Finally, by passing to a subsequence, we may conclude that there exists 
 $u \in L^\infty(0,T_0;L^2 \cap W^{3,3+\delta}_\cco\cap W^{1,\infty} \cap W^{2,\infty}_\cco)$, 
 a solution for \eqref{euler} and \eqref{eulerb}. In addition, this solution is unique since it is Lipschitz continuous.
 
 \colb
 \section*{Acknowledgments}
 The author was supported in part by the
 NSF grant DMS-2205493 and is grateful to Igor Kukavica for helpful discussions.

\end{document}